\newtheorem{theorem}{Theorem}[section]
\newtheorem{lemma}[theorem]{Lemma}
\newtheorem{proposition}[theorem]{Proposition}
\newtheorem{corollary}[theorem]{Corollary}
\theoremstyle{definition}
\newtheorem{example}[theorem]{Example}
\newtheorem{conjecture}[theorem]{Conjecture}
\newtheorem{remark}[theorem]{Remark}
\newcommand{\h}{\mathfrak{h}}
\newcommand{\ben}{\begin{enumerate}}
\newcommand{\een}{\end{enumerate}}
\newcommand{\p}{\partial}
\renewcommand{\aa}{{\bold a}}
\newcommand{\nad}[2]{\genfrac{}{}{0pt}{}{#1}{#2}}
\def \a {\alpha}
\begin{document}

\title[Algebras generated by generalized power sums] {On Cohen-Macaulayness of algebras generated by generalized power sums}

\author{Pavel Etingof}
\address{Department of Mathematics, Massachusetts Institute of Technology,
Cambridge, MA 02139, USA} \email{etingof@math.mit.edu}

\author{Eric Rains}
\address{Department of Mathematics, California Institute of
Technology, Pasadena, CA 91125, USA}\email{rains@math.caltech.edu}

\maketitle

\centerline{\large with an appendix by Misha Feigin} 
\vskip .1in

\vskip .05in
\centerline{\bf To Sasha Veselov on his 60-th birthday, with admiration} 
\vskip .05in

\section{Introduction} 

Let $a_{ij}, i\ge 1, 1\le j\le N$, be nonzero complex numbers. 
Let 
$$
Q_i(x_1,...,x_N)=\sum_{j=1}^Na_{ij}x_j^i.
$$ 
We call the functions $Q_i$ {\it generalized power sums}. 
Let $A$ be the algebra generated by $Q_i, i\ge 1$ inside 
$\Bbb C[x_1,...,x_N]$. For generic $a_{ij}$, this algebra is finitely generated. 
The main question studied in this paper is when the algebra $A$ is Cohen-Macaulay (shortly, CM). 
Specifically, following \cite{BCES}, for various collections of positive integers $(r_1,...,r_k)$ with $\sum r_i=N$, we 
study the CM property of algebras of generalized power sums with symmetry type $(r_1,...,r_k)$
(i.e. symmetric in the first $r_1$ variables, the next $r_2$ variables, etc.).  

In Section 2, we study the simplest nontrivial case -- type $(1,1)$. In this case, by renormalizing $Q_i$, we can assume that 
$a_{i1}=a_i$ and $a_{i2}=1$, so $Q_i=a_iy^i+z^i$. We show that if $a_1,a_2,a_3$ are generic, then 
$A$ is CM if and only if $a_i=c^i\frac{q^i-1}{1-t^i}$ for some numbers $c,q,t\in \Bbb C$. 

In Section 3, we extend this analysis to the case of type $(r,s)$. 
Namely, we show that $A$ is CM if $a_i=c^i\frac{q^i-1}{1-t^i}$, i.e., after rescaling the variables $y_j$, 
$$
Q_i=\frac{q^i-1}{1-t^i}(y_1^i+...+y_r^i)+(z_1^i+...+z_s^i)
$$
In this case, the algebra $A$ is 
the algebra of $q,t$-deformed Newton sums introduced by Sergeev and Veselov in \cite{SV2}. 
If $t=q^{-n}$, where $n$ is a positive integer, this is a subalgebra of the algebra of quantum integrals 
of the deformed Macdonald-Ruijsenaars system. Our proof of the CM property of this algebra is based on degeneration to the classical case,
$$
Q_i=a(y_1^i+...+y_r^i)+(z_1^i+...+z_s^i)
$$
(obtained by setting $q=t^{-a}$ and tending $t$ to $1$) 
where the CM property is established in \cite{BCES} based on the methods of \cite{EGL} (namely, the representation 
theory of rational Cherednik algebras with minimal support). 

In Section 4, we study the case of type $(1,r,s)$. We show that in this case the CM property occurs 
generically for the generalized power sums 
$$
\frac{q^i-t^i}{1-t^i}x^i+\frac{q^i-1}{1-t^i}(y_1^i+...+y_r^i)+(z_1^i+...+z_s^i).
$$
and their degenerations
$$
(a+1)x^i+a(y_1^i+...+y_r^i)+(z_1^i+...+z_s^i) 
$$
(again obtained by setting $q=t^{-a}$, $t\to 1$). 
Namely, we prove this by reduction to type $(r+1,s+1)$. 
For $r=1$, this confirms the first statement of Conjecture 7.4 in \cite{BCES}.

We also show that in all of the above cases, the CM algebras $A$ can be defined by quasi-invariance conditions on hyperplanes. 
In the case $(1,r,s)$, these quasi-invariance conditions appear to be new. 

In Section 5 we use a similar method to the one of Section 4 to 
prove that for any $m\ge 1$, $n\ge 3$, the union of $S_{mn}$-translates 
of the subspace
$$
x_1=...=x_{2m},\ x_{2m+1}=\dots=x_{3m},\ \dots,x_{(n-1)m+1}=\dots=x_{nm}
$$
(i.e., one group of $2m$ equal coordinates and $n-2$ groups of $m$ equal coordinates) 
is CM. This is done by reducing to the case of $n$ $m$-tuples of equal coordinates, where the result is 
proved using representations of rational Cherednik algebras in \cite{EGL}. 

In Section 6, we apply the theory of representations of the rational Cherednik algebra of minimal support to $m$-quasi-invariants considered in \cite{FV1,FV2}. 

In the Appendix, M. Feigin uses this approach to prove a conjecture from \cite{FV2} that the algebra of $m$-quasi-invariants in the case of one light particle ($s=1$) 
is Gorenstein. 

In particular, this paper explains all the instances of CM algebras found 
experimentally in \cite{BCES}, and confirms the philosophy (originating from \cite{FV1} and developed further in \cite{BEG})
that the CM property of algebras of this type should be rare, and, 
whenever it occurs, should be related to quasi-invariance conditions on 
hyperplanes, quantum integrable systems, and ultimately to representation theory. 

{\bf Acknowledgements.} The work of P.E. was  partially supported by the NSF grant DMS-1000113. 
The authors are grateful to O. Chalykh, M. Feigin, and A. Veselov for useful discussions. 

\section{Type $(1,1)$} 

\subsection{The algebra $\Lambda_\aa$} Let $\aa=(a_1,a_2,...)$ be a sequence of nonzero complex numbers.
Let $\Lambda_\aa$ be the subalgebra of $\Bbb C[y,z]$ generated by the polynomials
$Q_{i,a_i}$, $i\ge 1$, where 
$$
Q_{i,a}:=ay^i+z^i. 
$$
(When no confusion is possible, we will denote $Q_{i,a_i}$ simply by $Q_i$.) 

We will be interested in the question when the algebra $\Lambda_{\aa}$ is CM. 
Note that by renormalizing $y$, we may replace $a_i$ by $a_i/a_1^i$, and thus assume that $a_1=1$. 

\subsection{Auxiliary lemmas} 

We need a few auxiliary lemmas. 

\begin{lemma}\label{fingen} 
If $a_2+a_1^2\ne 0$ then the algebra $\Lambda_\aa$ 
is finitely generated as a module over the polynomial algebra $\Bbb C[Q_1,Q_2]$
(in particular, as a ring). 
\end{lemma}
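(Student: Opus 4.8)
The plan is to show that $\Lambda_\aa$ is a finite $\Bbb C[Q_1,Q_2]$-module by exhibiting, for each $i\ge 3$, a relation expressing $Q_i$ modulo $\Bbb C[Q_1,Q_2]\cdot\{1,y,z,y^2,yz,\dots\}$ in a controlled way, and then checking that only finitely many monomials in $y,z$ are needed. Concretely, set $p=Q_1=a_1y+z$ and $q=Q_2=a_2y^2+z^2$. Since $a_1=1$ (after renormalization) and $a_2+a_1^2=a_2+1\ne 0$, the two quadratic forms $p^2=y^2+2yz+z^2$ and $q=a_2y^2+z^2$ are linearly independent, so $y^2,yz,z^2$ all lie in $\Bbb C[Q_1,Q_2]+\Bbb C\cdot p^2+\Bbb C q$, i.e. the degree-$2$ part of $\Bbb C[y,z]$ is spanned by $1,p,y^2,yz,z^2$ over the relevant coefficients — more usefully, $\Bbb C[y,z]$ is a finite module over $\Bbb C[p,q]$ (this is the classical fact that two generic quadratic forms in two variables form a homogeneous system of parameters). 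Indeed $\Bbb C[y,z]$ is free of rank $4$ over $\Bbb C[p,q]$ with basis $1,y,z,yz$ (any choice completing $1$ to a basis in degrees $0,1,1,2$ works once $p^2,q$ are independent). The key point is then that $\Lambda_\aa$, being a subalgebra of $\Bbb C[y,z]$ containing $\Bbb C[Q_1,Q_2]=\Bbb C[p,q]$, is a $\Bbb C[p,q]$-submodule of a finite $\Bbb C[p,q]$-module.

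Since $\Bbb C[p,q]$ is Noetherian (it is a polynomial ring) and $\Bbb C[y,z]$ is a finitely generated — hence Noetherian — $\Bbb C[p,q]$-module, every $\Bbb C[p,q]$-submodule of $\Bbb C[y,z]$ is again finitely generated over $\Bbb C[p,q]$. In particular $\Lambda_\aa$ is finitely generated as a $\Bbb C[Q_1,Q_2]$-module, and a fortiori finitely generated as a ring. This is the whole argument; the substantive input is the hypothesis $a_2+a_1^2\ne 0$, which is exactly what guarantees that $p^2$ and $q$ are linearly independent quadratic forms and hence that $\Bbb C[p,q]\hookrightarrow \Bbb C[y,z]$ is a finite (equivalently, module-finite, equivalently integral) extension.

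I expect the main obstacle to be purely expository: one must verify cleanly that $p^2,q$ independent implies $\Bbb C[y,z]$ is module-finite over $\Bbb C[p,q]$. The cleanest route is a degree/dimension count: $\Bbb C[y,z]$ has Krull dimension $2$, and $p,q$ generate an ideal whose zero locus in $\Bbb A^2$ is $\{p=q=0\}$; when $p^2$ and $q$ are linearly independent quadratics the only common zero of the forms $y+z$ and $a_2y^2+z^2$ is the origin (substituting $z=-y$ gives $(a_2+1)y^2=0$, so $y=z=0$ since $a_2+1\ne0$), hence $(p,q)$ is a homogeneous system of parameters and $\Bbb C[y,z]$ is finite over $\Bbb C[p,q]$ by graded Nakayama / the standard noether normalization criterion. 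If one prefers to avoid this, an explicit alternative is to produce monic integral equations: $y$ and $z$ are the roots of $T^2-(\text{lin. comb. of }p)T+(\text{lin. comb. of }p^2,q)$ after diagonalizing, which directly exhibits $y,z$ (hence all of $\Bbb C[y,z]$, hence $\Lambda_\aa$) as integral over $\Bbb C[p,q]$. Either way the conclusion follows, and no finer information about $a_3,a_4,\dots$ is needed.
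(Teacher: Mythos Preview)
Your proof is correct and follows essentially the same approach as the paper: reduce to showing that $Q_1=Q_2=0$ has only the trivial solution (via $z=-y$ and $(a_2+1)y^2=0$), conclude that $\Bbb C[y,z]$ is module-finite over $\Bbb C[Q_1,Q_2]$, and invoke Noetherianness (the Hilbert basis theorem) to get that the subalgebra $\Lambda_\aa$ is a finitely generated $\Bbb C[Q_1,Q_2]$-module. One expository remark: your first paragraph's appeal to ``$p^2$ and $q$ linearly independent'' is a red herring (they are independent regardless of $a_2$, since $p^2$ has a $yz$ term); the real content is exactly the zero-locus computation you give in your final paragraph, and you could lead with that and drop the rest.
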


\begin{proof} We may assume that $a_1=1$. 
It suffices to show that the equations $Q_1(y,z)=0$, $Q_2(y,z)=0$, i.e.
$$
y+z=0,\ a_2y^2+z^2=0
$$ 
have only the zero solution (then the entire polynomial algebra $\Bbb C[y,z]$ is finite over $\Bbb C[Q_1,Q_2]$, 
so $\Lambda_\aa$ is as well, by the Hilbert basis theorem). 
From the first equation we get $z=-y$, and substituting this into the second one, we get $(a_2+1)y^2=0$. 
Since $a_2\ne -1$, we have $y=z=0$.  
\end{proof} 

\begin{lemma}\label{freesubmod} Let $a_2\ne -a_1^2$, and $(a_2,a_3)\ne (a_1^2,a_1^3)$. 
Let $M\subset \Lambda_{\aa}$ be the submodule over $\Bbb C[Q_1,Q_2]$ generated by $1$ and $Q_3$. Then $M$ is free of rank $2$, so its Hilbert series is 
$$
h(u)=\frac{1+u^3}{(1-u)(1-u^2)}.
$$
\end{lemma}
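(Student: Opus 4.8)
The plan is to show that $1$ and $Q_3$ are linearly independent over $\mathbb{C}[Q_1,Q_2]$; since $M$ is generated by these two elements, this is equivalent to freeness of rank $2$, and the Hilbert series formula then follows immediately because $\deg Q_1 = 1$, $\deg Q_2 = 2$, $\deg Q_3 = 3$. So the whole task reduces to verifying that no nontrivial relation
$$
P(Q_1,Q_2) + R(Q_1,Q_2)\, Q_3 = 0, \qquad P,R \in \mathbb{C}[Q_1,Q_2],
$$
holds in $\mathbb{C}[y,z]$.

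First I would normalize $a_1 = 1$ as allowed, so $Q_1 = y+z$, $Q_2 = a_2 y^2 + z^2$, $Q_3 = a_3 y^3 + z^3$. The cleanest approach is to pass to the fraction field: by Lemma~\ref{fingen} the extension $\mathbb{C}[y,z]/\mathbb{C}[Q_1,Q_2]$ is finite, so $\mathbb{C}(y,z)$ has degree $\le 2$ over $\mathbb{C}(Q_1,Q_2)$ — in fact exactly $2$, since $\mathbb{C}(Q_1,Q_2)$ has transcendence degree $2$ but $y,z$ are not both rational in $Q_1,Q_2$ (the map $(y,z)\mapsto(Q_1,Q_2)$ is generically $2$-to-$1$, as it is invariant under the involution swapping the two points with given $Q_1,Q_2$ when $a_2 \ne 1$). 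Then $\{1, Q_3\}$ is a $\mathbb{C}(Q_1,Q_2)$-basis of $\mathbb{C}(y,z)$ as soon as $Q_3 \notin \mathbb{C}(Q_1,Q_2)$, i.e. as soon as $Q_3$ is not fixed by the generic involution. A relation as above with $R \ne 0$ would force $Q_3 \in \mathbb{C}(Q_1,Q_2)$; a relation with $R = 0$ would force $P(Q_1,Q_2) = 0$, impossible since $Q_1,Q_2$ are algebraically independent (they are a system of parameters by Lemma~\ref{fingen}). So I must rule out $Q_3 \in \mathbb{C}(Q_1,Q_2)$.

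To see $Q_3 \notin \mathbb{C}(Q_1,Q_2)$, I would describe the generic involution $\sigma$ explicitly. Given generic values $s = Q_1 = y+z$ and the value of $Q_2$, the pair $(y,z)$ is determined up to the swap $(y,z) \mapsto (y',z')$ where $y',z'$ are the two roots (in the appropriate variable) of a quadratic; concretely, $a_2 y^2 + z^2 = a_2 y'^2 + z'^2$ and $y+z = y'+z'$. Solving, $y' = \tfrac{(a_2-1)y + 2z}{a_2+1}\cdot(\text{sign adjustments})$ — more precisely $\sigma$ acts linearly on $(y,z)$ as the reflection in the quadratic form $a_2 y^2 + z^2$ fixing the line $y = z$ direction... the point is that $\sigma$ is a well-defined nontrivial linear involution (here $a_2 \ne -1$ and $a_2 \ne 1$ are exactly what's needed for it to be defined and nontrivial). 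Then $\mathbb{C}(Q_1,Q_2) = \mathbb{C}(y,z)^\sigma$, and $Q_3 \in \mathbb{C}(Q_1,Q_2)$ iff $\sigma(Q_3) = Q_3$. A direct computation of $\sigma(Q_3) = a_3 \sigma(y)^3 + \sigma(z)^3$ and comparison with $Q_3$ shows this fails precisely unless $(a_2,a_3) = (1,1)$ (which is excluded); the hypothesis $a_2 \ne 1$ together with $(a_2,a_3)\ne(1,1)$ covers exactly the cases where $\sigma(Q_3) \ne Q_3$.

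\textbf{The main obstacle} I anticipate is the bookkeeping in identifying $\sigma$ and computing $\sigma(Q_3)$ cleanly — in particular handling the case $a_2 = 1$ separately (where $\sigma$ is just the swap $y \leftrightarrow z$, $\mathbb{C}(Q_1,Q_2) = \mathbb{C}(y,z)^{S_2}$, and $\sigma(Q_3) = a_3 z^3 + y^3 = Q_3$ forces $a_3 = 1$, giving the excluded point), versus $a_2 \ne 1$ (where $\sigma$ is a genuinely different involution). An alternative that sidesteps the involution entirely: assume a minimal-degree relation $P + R\,Q_3 = 0$, specialize cleverly — e.g. evaluate along the curve $z = 0$ (where $Q_1 = y$, $Q_2 = a_2 y^2$, $Q_3 = a_3 y^3$) and along $y = 0$ (where $Q_1 = z$, $Q_2 = z^2$, $Q_3 = z^3$), obtaining two polynomial identities in one variable; comparing them pins down the leading behavior of $P$ and $R$ and, using $a_2 \ne 1$ and $a_3 \ne 1$ (or more precisely the stated genericity), derives a contradiction by descent on degree. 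I would likely present whichever of the two is shorter, but the fraction-field/involution argument is conceptually the more transparent one.
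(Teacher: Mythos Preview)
Your argument is correct but takes a different route from the paper. The paper first shows $Q_3\notin\Bbb C[Q_1,Q_2]$ by the obvious degree-$3$ coefficient comparison (the only candidates are $\alpha Q_1^3+\beta Q_1Q_2$, and matching the four coefficients of $y^3,y^2z,yz^2,z^3$ forces $(a_2,a_3)=(1,1)$), and then passes to the fraction field in one line: since $\Lambda_\aa$ is a finite $\Bbb C[Q_1,Q_2]$-module, $Q_3$ is integral over $\Bbb C[Q_1,Q_2]$, and $\Bbb C[Q_1,Q_2]$ is integrally closed, so $Q_3\in\Bbb C(Q_1,Q_2)$ would force $Q_3\in\Bbb C[Q_1,Q_2]$. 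You instead identify $\Bbb C(Q_1,Q_2)=\Bbb C(y,z)^\sigma$ for the explicit linear involution $\sigma$ and check $\sigma(Q_3)\ne Q_3$ by expanding a cubic. Both reach the same conclusion $Q_3\notin\Bbb C(Q_1,Q_2)$; the paper's integrality trick is shorter and avoids the case split on $a_2=1$ and the cubic expansion, while your Galois picture makes the degree-$2$ structure of the extension and the role of the excluded locus $(a_2,a_3)=(1,1)$ more geometrically transparent.
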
 

\begin{proof} We may assume that $a_1=1$. First, we claim that $Q_3\notin \Bbb C[Q_1,Q_2]$. 
Assume the contrary, that $Q_3=\alpha Q_1^3+\beta Q_1Q_2$. 
Then from comparing coefficients we have 
$$
\alpha+\beta a_2=a_3,\ 3\alpha+\beta a_2=0,\ \alpha+\beta=1,\ 3\alpha+\beta=0,
$$ 
which implies that $(a_2,a_3)=(1,1)$, a contradiction.

 Since by Lemma \ref{fingen}, $\Lambda_\aa$ is a finitely generated 
 $\Bbb C[Q_1,Q_2]$-module, this implies that $Q_3\notin \Bbb C(Q_1,Q_2)$. 
Thus, the module $M$ is indeed free with the stated Hilbert series. 
\end{proof} 

Now suppose that the assumptions of Lemma \ref{freesubmod} are satisfied. 
It is clear that the rank of $\Lambda_\aa$ over $\Bbb C[Q_1,Q_2]$ is $2$. 
Thus, $\Lambda_\aa$ is CM if and only if it coincides with $M$. Hence, since $\frac{1}{(1-u)^2}-h(u)=\frac{u}{1-u}$, 
we obtain the following lemma. 

\begin{lemma}\label{cmequiv} Under the assumptions of Lemma \ref{freesubmod}, 
the CM property of $\Lambda_\aa$ is equivalent to saying that the codimension of $\Lambda_\aa[i]$ in homogeneous polynomials 
of degree $i$ is $1$ for each $i\ge 1$.
\end{lemma}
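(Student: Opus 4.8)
The plan is to derive this lemma as a direct consequence of Lemma~\ref{freesubmod} together with a Hilbert-series bookkeeping argument. First I would recall the setup: under the stated assumptions, $\Lambda_\aa$ is a finitely generated module over the polynomial ring $R:=\Bbb C[Q_1,Q_2]$ of Krull dimension $2$, and since $Q_3\notin\Bbb C(Q_1,Q_2)$ the three elements $Q_1,Q_2,Q_3$ are algebraically dependent in a specific way, forcing the generic rank of $\Lambda_\aa$ over $R$ to equal $2$. The module $M$ generated by $1$ and $Q_3$ is free of rank $2$ by Lemma~\ref{freesubmod}, so $M\subseteq\Lambda_\aa$ is a rank-$2$ free submodule of the rank-$2$ module $\Lambda_\aa$.

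The key observation is the standard fact that, over a polynomial ring (hence Cohen--Macaulay, with $Q_1,Q_2$ a homogeneous system of parameters), a finitely generated graded module $N$ of Krull dimension $2$ is Cohen--Macaulay if and only if $Q_1,Q_2$ form a regular sequence on $N$, which in turn is equivalent to $N$ being a free $R$-module. Thus $\Lambda_\aa$ is CM if and only if it is free over $R$; and a rank-$2$ free module sandwiched between $M$ and itself, with $M$ already free of the same rank, can be free only if $\Lambda_\aa=M$ (the inclusion $M\hookrightarrow\Lambda_\aa$ of free modules of equal rank with torsion-free quotient supported in codimension $\geq 1$ forces the quotient, if $\Lambda_\aa$ is CM of dimension $2$, to vanish, since any nonzero graded quotient of free $R$-modules of equal rank is torsion, hence of dimension $\le 1$, which would obstruct freeness of $\Lambda_\aa$). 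Conversely if $\Lambda_\aa=M$ then it is free, hence CM. So the CM property is equivalent to the equality $\Lambda_\aa=M$.

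Next I would translate $\Lambda_\aa=M$ into the codimension statement via Hilbert series. The Hilbert series of $M$ is $h(u)=\frac{1+u^3}{(1-u)(1-u^2)}$ from Lemma~\ref{freesubmod}, while the ambient graded vector space $\Bbb C[y,z]$ has Hilbert series $\frac{1}{(1-u)^2}$. Since $M\subseteq\Lambda_\aa\subseteq\Bbb C[y,z]$, the equality $\Lambda_\aa=M$ holds if and only if $\dim\Lambda_\aa[i]=\dim M[i]$ for all $i$, i.e. if and only if the codimension of $\Lambda_\aa[i]$ inside the degree-$i$ homogeneous polynomials equals $\dim(\Bbb C[y,z][i])-\dim M[i]$, which is the $u^i$-coefficient of $\frac{1}{(1-u)^2}-h(u)=\frac{u}{1-u}=u+u^2+u^3+\cdots$. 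That coefficient is $1$ for every $i\ge 1$ and $0$ for $i=0$. Hence $\Lambda_\aa=M$ is equivalent to: the codimension of $\Lambda_\aa[i]$ in the space of degree-$i$ homogeneous polynomials is $1$ for each $i\ge 1$, which is exactly the assertion of the lemma.

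The only mild subtlety — the step I would be most careful about — is the implication ``$\Lambda_\aa$ CM $\Rightarrow$ $\Lambda_\aa=M$,'' since a priori one only knows $M\subseteq\Lambda_\aa$ with both of rank $2$; one must rule out a proper inclusion of a free module into a free module of the same rank. This is handled by noting that $\Lambda_\aa/M$ is then a finitely generated graded $R$-module which is torsion (equal ranks), hence of Krull dimension $\le 1$, so it has an associated prime of height $\ge 1$; but a regular sequence of length $2$ on the free module $\Lambda_\aa$ would remain regular after the obvious bookkeeping, and the presence of such a codimension-$\le 1$ torsion quotient is incompatible with $\Lambda_\aa$ being a second syzygy / free — concretely, comparing Hilbert series, $\dim\Lambda_\aa[i]>\dim M[i]$ for some $i$ is impossible once $\Lambda_\aa$ is free of rank $2$ over $R$ and contains the rank-$2$ free module $M$ with $h_M\le h_{\Lambda_\aa}\le \frac{1}{(1-u)^2}$ forcing equality coefficientwise when freeness pins down the series. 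I would phrase this cleanly using the fact already extracted in the excerpt that $\frac{1}{(1-u)^2}-h(u)=\frac{u}{1-u}$, so that ``codimension $1$ in each positive degree'' is precisely the numerical shadow of $\Lambda_\aa$ being the maximal CM module between $M$ and $\Bbb C[y,z]$.
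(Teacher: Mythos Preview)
Your overall strategy matches the paper's: reduce Cohen--Macaulayness to freeness over $R=\Bbb C[Q_1,Q_2]$, identify freeness with the equality $\Lambda_\aa=M$, and then read off the codimension statement from the Hilbert-series identity $\frac{1}{(1-u)^2}-h(u)=\frac{u}{1-u}$. The Hilbert-series bookkeeping in your third paragraph is correct and is exactly how the paper concludes.

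There is, however, a genuine gap in your argument for the implication ``$\Lambda_\aa$ CM $\Rightarrow$ $\Lambda_\aa=M$''. You assert that a nonzero torsion quotient $\Lambda_\aa/M$ ``would obstruct freeness of $\Lambda_\aa$'', but this is false: a free $R$-module of rank $2$ can perfectly well contain a proper free submodule of rank $2$ (e.g.\ $Q_1R\oplus R\subsetneq R\oplus R$). Nothing in the abstract module theory forbids such an inclusion, so the last paragraph does not rescue the step.

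What actually pins down the Hilbert series is the concrete knowledge of $\Lambda_\aa$ in low degrees. Since $\Lambda_\aa$ is \emph{generated as an algebra} by $Q_1,Q_2,Q_3,\dots$, one has $\Lambda_\aa[1]=\Bbb C Q_1$ and $\Lambda_\aa[2]=\Bbb C Q_1^2+\Bbb C Q_2$, so $\dim\Lambda_\aa[1]=1$ and $\dim\Lambda_\aa[2]=2$. If $\Lambda_\aa$ is free of rank $2$ over $R$, one free generator is $1$ and the other has some degree $d\ge 0$; the resulting Hilbert series $\frac{1+u^d}{(1-u)(1-u^2)}$ must match these low-degree dimensions, which forces $d\ge 3$. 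On the other hand the inclusion $M\subseteq\Lambda_\aa$ forces $d\le 3$. Hence $d=3$, $h_{\Lambda_\aa}=h_M$, and $\Lambda_\aa=M$. Inserting this argument in place of your torsion-quotient reasoning closes the gap.
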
 

Note that this codimension is clearly at most $1$. 

\subsection{The CM property of $\Lambda_\aa$} 

Let $q,t$ be not roots of unity, $t\ne q,q^{-1}$, and $c\ne 0$.  

\begin{theorem}\label{CMnesscrit} (i) If $a_i=c^i\frac{q^i-1}{1-t^i}$ for $i\ge 1$ then the algebra $\Lambda_\aa$ is CM with Hilbert series 
$h(u)$. 

(ii) Let $\aa$ be any sequence of nonzero numbers, and $c,q,t$ be such that $a_i=c^i\frac{q^i-1}{1-t^i}$ 
for $i=1,2,3$. Assume that $q,t$ are not roots of unity, and $t\ne q,q^{-1}$. If $\Lambda_\aa$ 
is CM, then $a_i=c^i\frac{q^i-1}{1-t^i}$ for all $i\ge 1$. 

(iii) Let $a_i=c^ia$, where $a\ne \pm 1$. Then the algebra $\Lambda_\aa$ is CM with Hilbert series 
$h(u)$. 

(iv) If $a_i=c^ia$ with $a\ne \pm 1$ for $i=1,2,3$, and if $\Lambda_\aa$ is CM, then $a_i=c^ia$ for all $i\ge 1$. 
\end{theorem}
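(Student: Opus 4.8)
The plan is to funnel all four parts through Lemma~\ref{cmequiv}. First I would check that Lemmas~\ref{fingen} and \ref{freesubmod} apply in every case: a short computation gives $a_2+a_1^2=c^2\frac{q-1}{1-t}\cdot\frac{2(q-t)}{1-t^2}$ and $a_2-a_1^2=c^2\frac{q-1}{1-t}\cdot\frac{2(1-qt)}{1-t^2}$, both nonzero precisely because $q$ is not a root of unity and $t\ne q,q^{-1}$; in the constant case $a_2+a_1^2=c^2a(1+a)$ and $a_2-a_1^2=c^2a(1-a)$ are nonzero because $a\ne0,\pm1$. Thus CM-ness of $\Lambda_\aa$ is equivalent to the codimension of $\Lambda_\aa[i]$ in $\Bbb C[y,z]_i$ being exactly $1$ for all $i\ge1$, and this codimension is always $\le1$. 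So for the existence parts (i) and (iii) it suffices to produce, for each $i$, a single \emph{nonzero} linear functional on $\Bbb C[y,z]_i$ that annihilates every product $Q_\lambda:=\prod_j Q_{\lambda_j}$ with $\lambda\vdash i$.

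For (i), rescale $y$ so that $c=1$; then $Q_i=\frac{q^i-1}{1-t^i}y^i+z^i$, and the key identity is $Q_i(1,1)=Q_i(t,q)$ for every $i$ — this is just the relation $a_i=\frac{q^i-1}{1-t^i}$ rewritten as $a_i+1=a_it^i+q^i$. It propagates to products, $Q_\lambda(1,1)=Q_\lambda(t,q)$ for all $\lambda$, so $\Lambda_\aa[i]$ lies in the kernel of $\phi_i(f):=f(1,1)-f(t,q)$, and $\phi_i\ne0$ since $\phi_i(z^i)=1-q^i\ne0$. Hence the codimension is exactly $1$ in every degree, so by Lemma~\ref{cmequiv} the algebra is CM, and then $\Lambda_\aa=M$ has Hilbert series $h(u)$. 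Part (iii) runs the same way (again with $c=1$) using the confluent functional $\psi_i(f):=\big((\partial_y-a\partial_z)f\big)(1,1)$ in place of $\phi_i$: since $\partial_y-a\partial_z$ is a derivation and $(\partial_y-a\partial_z)Q_i=ai\,(y^{i-1}-z^{i-1})$ vanishes at $(1,1)$, $\psi_i$ kills all $Q_\lambda$, while $\psi_i(y^i)=i\ne0$.

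For the uniqueness parts (ii) and (iv), write $\aa^*$ for the distinguished sequence ($a_i^*=c^i\frac{q^i-1}{1-t^i}$, resp.\ $a_i^*=c^ia$) and induct on $i\ge4$, assuming $a_j=a_j^*$ for all $j<i$; the case $i=4$ is the hypothesis. Let $\Lambda'_\aa\subseteq\Lambda_\aa$ be the subalgebra generated by $Q_1,\dots,Q_{i-1}$; since $a_1,\dots,a_{i-1}$ are already starred, its degree-$i$ part $W:=\Lambda'_\aa[i]=\Lambda'_{\aa^*}[i]$ depends only on $\aa^*$. Because $i-1\ge3$, $W$ contains the rank-$2$ free module $M$ of Lemma~\ref{freesubmod}, so $\dim W\ge[u^i]h(u)=i$; on the other hand $W\subseteq\Lambda_{\aa^*}[i]$, which has dimension $i$ by part (i) (resp.\ (iii)). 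Hence $\dim W=i$, so $W=\Lambda_{\aa^*}[i]=\ker\phi_i$ (resp.\ $\ker\psi_i$), and in particular $Q_{i,a_i^*}\in W$. Now $\Lambda_\aa[i]=W+\Bbb C\,Q_{i,a_i}$, and CM-ness forces $\dim\Lambda_\aa[i]=i=\dim W$, hence $Q_{i,a_i}\in W$ too. If $a_i\ne a_i^*$, then $y^i$, a nonzero scalar multiple of $Q_{i,a_i}-Q_{i,a_i^*}$, lies in $W=\ker\phi_i$, contradicting $\phi_i(y^i)=1-t^i\ne0$ (resp.\ $\psi_i(y^i)=i\ne0$). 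Therefore $a_i=a_i^*$, which closes the induction.

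I expect the main obstacle to be the dimension bookkeeping in the last step: one needs $W=\Lambda'_\aa[i]$ to have dimension \emph{exactly} $i$, so that the codimension-$1$ constraint in degree $i$ genuinely pins down $a_i$ rather than leaving slack to be absorbed in higher degrees. This is exactly where both halves of the estimate are used — the lower bound from the explicit free module $M\subset\Lambda'_{\aa^*}$ and the upper bound from the already-proven CM-ness of $\Lambda_{\aa^*}$ in (i)/(iii). A secondary point requiring care is to confirm that the parameters excluded in the hypotheses ($q$ a root of unity, $t=q^{\pm1}$, $a=\pm1$) are precisely the loci where Lemma~\ref{freesubmod} degenerates, so that the stated assumptions are exactly what the argument consumes.
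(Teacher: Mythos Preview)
Your proof is correct and follows essentially the same route as the paper: the same quasi-invariance functional (your $\phi_i(f)=f(1,1)-f(t,q)$ is the homogeneous form of the paper's $f(tx,qx)=f(x,x)$), the same limiting derivation for the constant case, and the same reduction through Lemma~\ref{cmequiv}. The one noteworthy difference is in (ii)/(iv): you run a strong induction through the subalgebra $\Lambda'_{\aa}$ generated by $Q_1,\dots,Q_{i-1}$, whereas the paper observes directly that since $a_1,a_2,a_3$ are already starred, the free module $M$ of Lemma~\ref{freesubmod} itself coincides with the quasi-invariance space in every degree, so CM-ness forces $\Lambda_{\aa}=M$ and hence every $Q_i$ is quasi-invariant in one stroke---no induction needed. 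Your argument is sound but does a little more work than necessary.
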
 

\begin{remark} It is easy to show that for generic $a_1,a_2,a_3$ the equations 
$$
a_i=c^i\frac{q^i-1}{1-t^i}, i=1,2,3
$$ 
lead to a quadratic equation, and thus have two solutions $(c,q,t)$, related by the Galois symmetry 
$(c,q,t)\to (cqt^{-1},q^{-1},t^{-1})$. In particular, for generic $a_1,a_2,a_3$ a solution 
$(c,q,t)$ exists, and Theorem \ref{CMnesscrit}(ii) applies.  
\end{remark} 

\begin{proof} By renormalizing $y$, we may assume without loss of generality that $c=1$. 
Let us make this assumption. 

(i) Any element $f\in \Lambda_\aa$ satisfies the quasi-invariance condition 
$$
f(tx,qx)=f(x,x). 
$$
Indeed, this condition is satisfied for each generator $Q_i$, and if it is satisfied for $f$ and $g$ then it is satisfied for $fg$. 
This gives a codimension $1$ subspace in $\Bbb C[y,z][i]$ for all $i\ge 1$ (since the function $z^i$ does not satisfy this condition, as $t$ is not a root of unity). So by Lemma \ref{cmequiv}, the result holds under the assumptions of Lemma \ref{freesubmod}.
In terms of $q$ and $t$, these assumptions turn into the conditions that $qt\ne 1$ and $q\ne t$, so they are satisfied. 

(iii) is a limiting case of (i) (for $q=t^{-a}$ and $t\to 1$), so in this case $f\in \Lambda_\aa$ 
satisfies the limiting quasi-invariance condition 
$$
((a\partial_2-\partial_1)f)(x,x)=0,
$$
giving a codimension $1$ condition in each positive degree. The assumptions of Lemma \ref{freesubmod}
in this case turn into the conditions $a\ne \pm 1$, so they are satisfied, and Lemma \ref{cmequiv} implies the statement. 

(ii) Let $i\ge 4$. By Lemma \ref{freesubmod}, homogeneous polynomials of degree $i$ in $Q_1,Q_2,Q_3$ (linear in $Q_3$) 
span a subspace of codimension $1$ in $\Bbb C[y,z][i]$ --- the space of solutions of the quasi-invariance equation 
$f(tx,qx)=f(x,x)$. So $Q_i$ must also satisfy this condition. Thus, 
$t^ia_i+q^i=a_i+1$, i.e., $a_i=\frac{q^i-1}{1-t^i}$, as desired. 

(iv) The proof is similar to (ii), except that we use the limiting quasi-invariance condition $((a\partial_2-\partial_1)f)(x,x)=0$.
\end{proof} 

\begin{remark} In spite of Theorem \ref{CMnesscrit}, there exist 
infinite-parameter families of sequences $\aa$ for which 
$\Lambda_\aa$ is CM. For instance, if
  $q$ and $t$ are primitive $n$th roots of unity with $t\ne q,q^{-1}$,
  then for any sequence with $a_i=c^i (q^i-1)/(1-t^i)$ when $i$ is not
  a multiple of $n$, the corresponding $\Lambda_\aa$ is CM.  Indeed, in
  that case, the algebra generated by $Q_1$, $Q_2$, $Q_3$ is
  determined by the same (codimension 1) quasi-invariance condition as
  in the generic case, but since both $x^{mn}$ and $y^{mn}$ are
  quasi-invariant, any linear combination of them is contained in the algebra.
(Note that in this case $\Lambda_\aa$ does not actually depend on $a_n,a_{2n},...$). 
\end{remark}

\section{Type $(r,s)$} 

\subsection{Finite generation} 
First let us prove a general result on finite generation
(which is fairly standard, see e.g. \cite{SV2}, Theorem 5.1).   
Let $a_{ij}, i\ge 1, 1\le j\le N$, be nonzero complex numbers. 
Let $Q_i(x_1,...,x_N)=\sum_{j=1}^Na_{ij}x_j^i$. Let $A$ be the algebra 
generated by $Q_i, i\ge 1$ inside $\Bbb C[x_1,...,x_N]$. 

\begin{proposition}\label{fingen1} $A$ is finitely generated if and only if the system of equations 
\begin{equation}\label{syst} 
Q_i(x_1,...,x_N)=0,\ i\ge 1
\end{equation} 
has only the zero solution. 
\end{proposition}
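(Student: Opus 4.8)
The plan is to prove the two implications separately, using the fact that $A$ is graded and Noetherian-adjacent only through the Hilbert basis theorem applied to the ambient polynomial ring. First I would set $R=\Bbb C[x_1,\dots,x_N]$ and note that the $Q_i$ are homogeneous (with $\deg Q_i=i$) and that $A=\bigoplus_{d\ge 0}A[d]$ is a graded subalgebra of $R$, with $A[0]=\Bbb C$. The key geometric object is the common zero locus $X=V(Q_i:i\ge 1)\subset \Bbb A^N$, which is a closed cone since the $Q_i$ are homogeneous; the hypothesis ``\eqref{syst} has only the zero solution'' says exactly that $X=\{0\}$.

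For the ``if'' direction, assume $X=\{0\}$. Since $\Bbb A^N$ is Noetherian, the ideal generated by all the $Q_i$ equals the ideal generated by finitely many of them, say $Q_1,\dots,Q_m$, and these already cut out $\{0\}$. By the graded Nullstellensatz, the ideal $(Q_1,\dots,Q_m)$ in $R$ contains a power of the irrelevant ideal, hence $R$ is a finitely generated module over the polynomial subalgebra $\Bbb C[Q_1,\dots,Q_m]$ (a standard consequence: a graded ring that is module-finite over a graded subalgebra in this situation — here $R$ is integral over $\Bbb C[Q_1,\dots,Q_m]$ and finitely generated as an algebra, so finitely generated as a module). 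Then $A$ sits between $\Bbb C[Q_1,\dots,Q_m]$ and $R$, so $A$ is a submodule of a finitely generated module over the Noetherian ring $\Bbb C[Q_1,\dots,Q_m]$, hence itself finitely generated as a module, and in particular finitely generated as a $\Bbb C$-algebra. This is the same argument as in Lemma \ref{fingen}, just with more variables and more relations.

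For the ``only if'' direction, suppose \eqref{syst} has a nonzero solution $p\in\Bbb A^N$, $p\ne 0$. Because the $Q_i$ are homogeneous of positive degree, every element of the augmentation ideal $A_+=\bigoplus_{d\ge 1}A[d]$ vanishes at $p$ (it is a polynomial with no constant term in the $Q_i$). Now I would argue by contradiction: if $A$ were finitely generated as an algebra, say by homogeneous elements $f_1,\dots,f_k\in A_+$, then the graded Noether normalization / Noether's theorem would give that $A$ is module-finite over the polynomial ring on a homogeneous system of parameters, and in particular $A$ itself would be Noetherian and the scheme $\Spec A$ would be finite-dimensional with $\{A_+=0\}$ the cone point. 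The contradiction: the morphism $\Bbb A^N\to\Spec A$ induced by the inclusion $A\hookrightarrow R$ would send the whole line $\Bbb C\cdot p$ into the single point where $A_+$ vanishes, which is fine set-theoretically but forces $A$ to have Krull dimension $<N$ — actually the cleanest route is: the integral closure argument. If $A$ is finitely generated, then $R$ is a finitely generated $A$-module (since $R=\Bbb C[x_1,\dots,x_N]$ is generated over $A$ by $x_1,\dots,x_N$, which are integral over $A$ because each $x_j$ satisfies — hmm, this needs the $Q_i$ to generate enough). Rather than belabor this, I would use the dimension count directly: $\dim A=\dim R=N$ would have to hold (as $R$ is integral over $A$ whenever both the finite-generation and the integrality of the $x_j$ hold), while the existence of the punctured line $\Bbb C^\times\cdot p$ on which all of $A_+$ vanishes shows the fiber over the vertex of $\Spec A$ is positive-dimensional, and a positive-dimensional fiber over a closed point of an $N$-dimensional affine variety with $N$-dimensional source is impossible unless... this forces a drop in dimension, contradicting $\dim A=N$.

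The main obstacle is the ``only if'' direction, specifically making the fiber-dimension argument airtight: one must verify that when $A$ is finitely generated, the $x_j$ really are integral over $A$ (equivalently $R$ is finite over $A$), which is where one needs that a nonzero common zero would already be an obstruction — so the argument is genuinely by contradiction and the integrality claim must be justified carefully. The cleanest fix is probably to observe that $A$ finitely generated implies, via Noether normalization for $A$, a homogeneous s.o.p. $\theta_1,\dots,\theta_d\in A_+$ with $A$ finite over $\Bbb C[\theta_1,\dots,\theta_d]$; then $R$ is finite over $\Bbb C[\theta_1,\dots,\theta_d]$ iff $\theta_1,\dots,\theta_d$ cut out only $0$ in $\Bbb A^N$, and one shows $R$ must be finite over $A$ (hence over $\Bbb C[\theta_i]$) by a separate standard argument using that $\Bbb C[x_1,\ldots,x_N]$ is a domain finitely generated over $A$ of the same fraction field only when... — at which point the nonzero solution $p$ contradicts the s.o.p. property. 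I expect this to be where the reference to \cite{SV2}, Theorem 5.1, does the heavy lifting, so in the write-up I would cite it and give only the cone-point/homogeneity reductions in detail.
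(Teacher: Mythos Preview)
Your ``if'' direction is correct and matches the paper's argument essentially verbatim.

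The ``only if'' direction, however, has a genuine gap that you yourself flag but do not close. You want to derive a contradiction from a nonzero common zero $p$ by arguing that $R$ is finite (integral) over $A$, or equivalently that $\dim A=N$, but neither claim follows from finite generation of $A$ alone. Indeed, there is no reason for $\mathrm{Frac}(A)=\mathrm{Frac}(R)$ in general: already when all $a_{ij}=1$ the algebra $A$ is the ring of symmetric polynomials, which is a polynomial ring of dimension $N$ but whose fraction field is strictly smaller than $\Bbb C(x_1,\dots,x_N)$, so $R$ is \emph{not} integral over $A$. Your several attempts (integrality of the $x_j$, Noether normalization of $A$, fiber dimension) all circle back to this unproved input, and citing \cite{SV2} does not supply it either.

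The paper takes a completely different, elementary route. Given a nonzero solution $(x_1^*,\dots,x_N^*)$ with, say, $x_1^*\ne 0$, it defines the graded ring map $\Bbb C[x_1,\dots,x_N]\to\Bbb C[y,z]$ by $x_1\mapsto y\,x_1^*$ and $x_j\mapsto z\,x_j^*$ for $j\ge 2$. Since $Q_i(x^*)=0$, one computes that $Q_i\mapsto a_{i1}(x_1^*)^i(y^i-z^i)$, a nonzero scalar multiple of $y^i-z^i$. Thus $A$ surjects onto the subalgebra of $\Bbb C[y,z]$ generated by all $y^i-z^i$, and that algebra is well known (and easy) to be infinitely generated. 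Hence $A$ cannot be finitely generated. This sidesteps any integrality or dimension comparison between $A$ and $R$; the trick you were missing is to \emph{push forward} through a specialization determined by the bad point $p$, rather than to pull information back from $R$.
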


\begin{proof}
Suppose the system (\ref{syst}) has only the zero solution. By the Hilbert basis theorem, 
there is $k\ge 1$ such that this is true already for the first $k$ equations.
Then $\Bbb C[x_1,...,x_N]$ is a finitely generated module over $\Bbb C[Q_1,...,Q_k]$, 
and hence, by the Hilbert basis theorem, so is $A$. Thus, $A$ is finitely generated as an algebra. 

Conversely, suppose \eqref{syst} has a nonzero solution $(x_1^*,...,x_N^*)$.  
Without loss of generality we can assume that $x_1^*\ne 0$. 
Let $x_1=yx_1^*$ and $x_i=zx_i^*$ for $i \ge 2$, where 
$y,z$ are new variables. Then $Q_i$ specialize to 
$Q_i^*(y,z)=a_{i1}x_1^{*i}(y^i-z^i)$. So we just need to show that the 
algebra generated by the polynomials $f_i(y,z):=y^i-z^i$ is not finitely generated. 
But this is easy and well known (see e.g. \cite{BCES}, Remark 2.7(3)).  
\end{proof} 

\subsection{The algebra $\Lambda_{r,s,\aa}$ and its CM properties.}

Now let $r,s\ge 1$ be integers, and 
$$
Q_{r,s,i,a}(y_1,...,y_r,z_1,...,z_s):=a(y_1^i+...+y_r^i)+z_1^i+...+z_s^i. 
$$
Let $\aa=(a_1,a_2,...)$ be a sequence of nonzero complex numbers, 
and $\Lambda_{r,s,\aa}$ be the subalgebra of $\Bbb C[y_1,...,y_r,z_1,...,z_s]$ 
generated by $Q_{r,s,i,a_i}$ for all $i\ge 1$. When no confusion is possible, we 
will denote $Q_{r,s,i,a_i}$ simply by $Q_i$. 

By Proposition \ref{fingen1}, $\Lambda_{r,s,\aa}$ is finitely generated
if and only if the system
\begin{equation}\label{twobran3} 
a_i(y_1^i+...+y_r^i)+z_1^i+...+z_s^i=0, i\ge 1
\end{equation} 
has only the zero solution. If $r=1$, this implies 
that the algebra is infinitely generated iff $a_i=-(\beta_1^i+...+\beta_s^i)$ for some $\beta_1,...,\beta_s\in \Bbb C$, 
and a similar statement holds for $a_i^{-1}$ for $s=1$. However, if 
$r,s\ge 2$, the set of sequences violating finite generation is infinite dimensional. 
For example, taking $y_1=1,y_2=-1,z=1,z_2=-1$, and the rest of 
$y_j,z_l$ to be zero, we get that the sequence 
with $a_i=-1$ for odd $i$ and $a_i$ arbitrary for even $i$ 
violates finite generation. 

We would like to know when $\Lambda_{r,s,\aa}$ is CM. 
Note that as before, we may assume that $a_1=1$ (or any other nonzero constant) 
by renormalizing $y_i$. 

Our first result is the following theorem. 

Let $c\ne 0$, $q,t$ be not roots of unity, and 
$a_i=c^i\frac{q^i-1}{1-t^i}$. For $c=t^i$, this 
is the algebra of $q,t$-deformed Newton sums 
 (see \cite{SV2}, Section 5).

\begin{theorem}\label{CMnessrs} (i) (\cite{SV2}, Theorem 5.1) $\Lambda_{r,s,\aa}$ is finitely generated if and only if 
$q^m\ne t^n$ for integers $1\le m\le r,1\le n\le s$.  

(ii) If $q,t$ are Weil generic (i.e., outside of a countable union of curves) then $\Lambda_{r,s,\aa}$ is CM with Hilbert series 
$$
h_{r,s}(u)=\frac{1}{(u,u)_r}\sum_{i=0}^s\frac{u^{i(r+1)}}{(u,u)_i}.
$$
where $(u,u)_m:=(1-u)....(1-u^m)$. 
\end{theorem}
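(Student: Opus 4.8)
The plan is to prove Theorem~\ref{CMnessrs} by degenerating to the classical case $Q_i = a(y_1^i+\dots+y_r^i)+(z_1^i+\dots+z_s^i)$, where the CM property and the Hilbert series $h_{r,s}(u)$ are known from \cite{BCES}. Part (i) is already proven in \cite{SV2}, so the content is part (ii). First I would observe that the Hilbert series $h_{r,s}(u)$ in the statement is the one computed in \cite{BCES} for the classical algebra, and recall the general semicontinuity principle: in a flat family of finitely generated graded algebras over a base, the Hilbert function is upper semicontinuous, and the CM locus is open (or at least, CM at a special point with the generic Hilbert series forces CM nearby, while non-CM is preserved under specialization of Hilbert function). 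So the strategy is: (a) set up the family interpolating between the $q,t$-deformed Newton sums and the classical algebra via $q = t^{-a}$, $t\to 1$; (b) check the family is flat with constant Hilbert series; (c) invoke the known CM property at the classical fiber.

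The key steps, in order, are as follows. Step 1: fix $r,s$ and let $c = 1$ by rescaling; consider the algebra $\Lambda_{r,s,\aa(q,t)}$ with $a_i = (q^i-1)/(1-t^i)$ as $(q,t)$ ranges over a suitable neighborhood (in the Weil-generic sense) of the locus $q=t^{-a}$. Step 2: show that for Weil-generic $(q,t)$ the Hilbert series of $\Lambda_{r,s,\aa}$ is \emph{at least} (coefficientwise) that of the classical algebra, using the fact that finite generation (part (i)) gives a well-defined Hilbert series, and that specialization $q=t^{-a}$, $t\to 1$ can only make the algebra smaller in each degree (the limiting generators $Q_i$ span a subspace whose dimension is lower semicontinuous). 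Step 3: conversely, exhibit enough elements in $\Lambda_{r,s,\aa}$ for generic $(q,t)$ — here the natural device is a quasi-invariance condition (the $q,t$-analogue of the condition $f(tx,qx)=f(x,x)$ from Section 2, now imposed on the appropriate union of hyperplanes $y_j = $ const $\cdot z_l$) satisfied by every generator $Q_i$, hence by all of $\Lambda_{r,s,\aa}$, cutting out a subspace of the correct codimension in each degree. Step 4: match the two bounds to conclude the Hilbert series equals $h_{r,s}(u)$, and then run the semicontinuity argument: since the classical fiber is CM with this Hilbert series and the Hilbert series is constant in the family, the generic fiber is CM as well (a flat degeneration of a non-CM algebra to a CM algebra with the same Hilbert series is impossible by the characterization of CM via the constancy of Hilbert function along a regular sequence / via local cohomology).

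The main obstacle will be Step 3 — identifying the correct quasi-invariance (or other explicit defining) conditions for $\Lambda_{r,s,\aa}$ and proving that the space of functions satisfying them has \emph{exactly} the Hilbert series $h_{r,s}(u)$, not merely an upper bound. For type $(1,1)$ this was transparent because the relevant codimension was $1$ and controlled by a single root-of-unity argument; for general $(r,s)$ the combinatorics of which monomials survive the quasi-invariance constraints on a configuration of hyperplanes is genuinely harder, and one likely needs a dimension count that already knows the answer $h_{r,s}(u)$ from \cite{BCES}. An alternative that sidesteps the explicit description: argue purely via flatness — show the family over the $(q,t)$-parameter space (suitably compactified/completed near $t=1$) is flat by a Gr\"obner-degeneration or a dimension-count argument, so that the Hilbert function is automatically constant, and then only the classical computation of \cite{BCES} and the openness of the CM locus in a flat family are needed. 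I expect the actual proof to take the latter route, with the Weil-genericity hypothesis exactly accounting for the countably many bad loci (roots of unity, coincidences $q^m=t^n$, and any extra conditions needed for flatness/finite generation) that must be excluded.
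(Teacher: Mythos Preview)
Your overall framework matches the paper's: degenerate to the classical case $q=t^{-a}$, $t\to 1$, where CM-ness and the Hilbert series $h_{r,s}(u)$ are known from \cite{BCES}, and use semicontinuity. The lower bound on the generic Hilbert series is indeed automatic from specialization, as you say in Step~2, and once the Hilbert series is shown to equal $h_{r,s}(u)$, freeness over $\Bbb C[Q_1,\dots,Q_{r+s}]$ (hence CM) follows by lifting a free basis from the classical fiber. The paper agrees that the only nontrivial point is the \emph{upper} bound $h_{\Lambda_{r,s,\aa}}(u)\le h_{r,s}(u)$ for Weil-generic $(q,t)$.

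Where you diverge is in the mechanism for this upper bound. You propose quasi-invariance conditions but correctly flag this as the obstacle you cannot close, and your ``pure flatness'' alternative is too vague to constitute a proof. The paper does something concrete that you do not mention: it uses the surjection $\phi:\Lambda\to\Lambda_{r,s,\aa}$ from the ring of symmetric functions (sending $p_i\mapsto Q_i$) and invokes \cite{SV2}, Theorem~5.6, which shows that for generic $(q,t)$ the kernel of $\phi$ contains the Macdonald polynomials $P_\lambda(q,t)$ for all $\lambda$ with $\lambda_{r+1}>s$. This exhibits a spanning set of $\Lambda_{r,s,\aa}$ indexed by partitions fitting the fat $(r,s)$-hook, giving the bound $h_{r,s}(u)$ directly. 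In other words, the upper bound comes from a known structural fact about deformed Newton sums and Macdonald polynomials, not from abstract flatness.

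Your quasi-invariance route can in fact be completed, but you are missing the step that closes it: the $(q,t)$-conditions \eqref{qquas} (which each $Q_i$ visibly satisfies) degenerate to the classical conditions \eqref{clquas} as $t\to 1$, and since the solution space of a parametric family of linear conditions is upper-semicontinuous in dimension, the $(q,t)$-quasi-invariant space in each degree is no larger than the classical one, which by \cite{SV1} equals $\Lambda_{r,s,a}$ and hence has Hilbert series $h_{r,s}(u)$. So your Step~3 is salvageable by running the same semicontinuity argument a second time on the quasi-invariant spaces themselves --- an observation you did not make. The paper's Macdonald-polynomial argument avoids this detour.
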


Note that Theorem \ref{CMnessrs} is a generalization (or, more precisely, a deformation) of the following theorem. 

Let $\Lambda_{r,s,a}$ be the algebra corresponding to the sequence $a_i=a$.

\begin{theorem}\label{classi} (\cite{SV1}, Theorem 5) $\Lambda_{r,s,a}$ is finitely generated if and only if 
$a\ne -n/m$ for integers $1\le m\le r,1\le n\le s$.  

(ii) (\cite{BCES}, Theorem 4.4) For generic $a$ the algebra $\Lambda_{r,s,a}$ is CM with Hilbert series $h_{r,s}(u)$.  
\end{theorem}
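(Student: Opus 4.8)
The plan is to treat the finite‑generation statement (i) by a direct elimination argument, and the Cohen--Macaulay statement (ii) by reducing it to freeness over a polynomial subalgebra and then invoking the representation theory of rational Cherednik algebras with minimal support, as in \cite{EGL}. For (i), Proposition \ref{fingen1} reduces the question to whether the system $a(y_1^i+\cdots+y_r^i)+(z_1^i+\cdots+z_s^i)=0$, $i\ge1$, has only the zero solution. Summing the generating series, the system holds exactly when the rational function
$$
G(u)=a\sum_{j=1}^r\frac{1}{1-y_ju}+\sum_{l=1}^s\frac{1}{1-z_lu}
$$
equals the constant $ar+s$. Collecting coordinates by value, $G(u)$ minus a constant equals $\sum_c(a\,m_c+n_c)/(1-cu)$, the sum over the distinct nonzero values $c$ occurring among the coordinates, where $m_c,n_c$ count the occurrences among the $y$'s and the $z$'s; since the functions $1/(1-cu)$ for distinct $c\neq0$ are linearly independent, $G$ is constant iff $a\,m_c+n_c=0$ for each such $c$. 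A nonzero solution then forces some $c\neq0$ with $m_c\ge1$ (if all $y_j=0$ then $z=0$), hence $n_c\ge1$ as $a\neq0$, so $a=-n_c/m_c$ with $1\le m_c\le r$, $1\le n_c\le s$; conversely any $a=-n/m$ of this form is realized by taking $m$ of the $y$'s and $n$ of the $z$'s equal to $1$, the rest $0$. This proves (i).

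For (ii), first note that for generic $a$ the elements $Q_1,\dots,Q_{r+s}$ are a homogeneous system of parameters for $\Lambda_{r,s,a}$: the condition that their common zero locus in $\Bbb C^{r+s}$ is $\{0\}$ is Zariski‑open in $a$ and holds at $a=1$, where the $Q_i$ are ordinary power sums. Hence $\Lambda_{r,s,a}$ is a finite graded module over the polynomial ring $R=\Bbb C[Q_1,\dots,Q_{r+s}]$ of Krull dimension $r+s$, so $\Lambda_{r,s,a}$ is Cohen--Macaulay iff it is free over $R$; one checks that $h_{r,s}(u)\,(u,u)_{r+s}$ is a polynomial, as it must be in that case. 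Next comes the easy half of an identification: $\Lambda_{r,s,a}\subseteq QI_{r,s,a}$, where $QI_{r,s,a}$ is the algebra of $S_r\times S_s$‑invariant $f\in\Bbb C[y_1,\dots,y_r,z_1,\dots,z_s]$ satisfying $(a\,\partial_{z_l}-\partial_{y_j})f=0$ on the hyperplane $y_j=z_l$ for all $j,l$; indeed each $Q_i$ satisfies these first‑order conditions, and the Leibniz rule preserves them under products. In suitable linear coordinates, with $u=y_j-z_l$, the condition across $y_j=z_l$ becomes $\partial_uf|_{u=0}=0$, i.e.\ quasi‑invariance of parameter $1$ across that hyperplane, so $QI_{r,s,a}$ is the quasi‑invariant algebra of the deformed hyperplane configuration $\{y_j=y_{j'}\}\cup\{z_l=z_{l'}\}\cup\{y_j=z_l\}$ of the $(r,s)$‑type deformed Calogero--Moser system. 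A deformed‑Molien computation of $\dim QI_{r,s,a}[d]$ (the same for all generic $a$) provides an a priori upper bound on $\dim\Lambda_{r,s,a}[d]$ equal to the coefficient of $u^d$ in $h_{r,s}(u)$.

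For the matching lower bound and for Cohen--Macaulayness, I would pass to a special value $a_0$ --- an integer, at which $\Lambda_{r,s,a_0}$ is a subalgebra of the quantum integrals of the deformed Calogero--Moser system --- and realize $QI_{r,s,a_0}$ as the spherical submodule $eM$ of a module $M$ with minimal support over the rational Cherednik algebra associated to that configuration, with $\Lambda_{r,s,a_0}$ identified with $eM$, hence with $QI_{r,s,a_0}$. The structure theory of \cite{EGL} then gives that $eM$, equivalently its associated graded, is Cohen--Macaulay with Hilbert series $h_{r,s}(u)$, the stratification of $\supp M$ by how many of the $z$‑coordinates have separated from the block of $y$‑coordinates accounting for the sum $\sum_{i=0}^s$ in $h_{r,s}$. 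Since $\dim\Lambda_{r,s,a}[d]$ is the rank of a matrix depending polynomially on $a$, its generic value is the maximum over $a$, hence is squeezed between its value at $a_0$ (the coefficient of $u^d$ in $h_{r,s}$) and the $QI$‑bound (the same), so it equals the coefficient of $u^d$ in $h_{r,s}(u)$. Therefore over the Zariski‑open, irreducible locus $U$ of values of $a$ where these maxima are attained, the family $a\mapsto\Lambda_{r,s,a}$ is flat with constant Hilbert series $h_{r,s}(u)$; $U$ contains $a_0$, the Cohen--Macaulay locus of a flat family is open, and $U$ is irreducible, so that locus contains the generic point of $U$, i.e.\ generic $a$. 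This yields (ii), with the asserted Hilbert series.

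The main obstacle is exactly the input used at $a_0$: identifying the correct rational Cherednik algebra (group, reflection representation, and parameter as a function of $a_0$) and the module $M$, and verifying that $M$ has minimal support --- this is where the genuine content of \cite{EGL} enters, and it simultaneously supplies the Hilbert series and, through the identification of $\Lambda_{r,s,a_0}$ with $QI_{r,s,a_0}$ together with the squeeze, the Cohen--Macaulayness for generic $a$. The deformed‑Molien computation of $\dim QI_{r,s,a}$ is a secondary obstacle. Everything else --- part (i), the reduction of the Cohen--Macaulay property to freeness, the polynomiality of $h_{r,s}(u)(u,u)_{r+s}$, and the semicontinuity and flatness bookkeeping --- is elementary or routine.
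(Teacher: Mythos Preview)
The paper does not itself prove Theorem~\ref{classi}: both parts are stated with attribution to \cite{SV1} (part (i)) and \cite{BCES} (part (ii)), and the paper only uses the result as input to Theorem~\ref{CMnessrs} and Section~4. So there is no in-paper proof to match against directly; the relevant comparison is with the cited sources and with the analogous arguments the paper does spell out.

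For (i), your generating-function argument is correct and complete. It is a slight repackaging of the argument of \cite{SV1} and is parallel to the paper's own proof of the $q,t$ analogue (Theorem~\ref{CMnessrs}(i)), where the same ``only the zero solution'' criterion from Proposition~\ref{fingen1} is verified by a matching-of-terms argument; your partial-fractions version is an equally clean way to do this.

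For (ii), your outline is on the right track and honestly flags its own gaps. The strategy of \cite{BCES}, as the paper describes it (see the Introduction and Section~\ref{mquasi}), is indeed to realize $\Lambda_{r,s,a}$ as a minimal-support module over a spherical rational Cherednik algebra and invoke \cite{EGL}; concretely, at an integer value $a=m$ one uses ${\bf e}H_{1/m}(S_{mr+s}){\bf e}$ (cf.\ the proof of Theorem~\ref{mquasi-inv1}), and Remark~2 after Theorem~\ref{mquasi-inv1} explicitly says that the general statement is then obtained by interpolating in $m$. So your ``special $a_0$ plus semicontinuity'' architecture is close to what is actually done. Two comments on the details. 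First, the step you call a ``deformed-Molien computation'' of the Hilbert series of $QI_{r,s,a}$ is not an independent elementary ingredient: in the literature this upper bound comes out of the same Cherednik-algebra analysis (or the super-Jack/Macdonald kernel computation of \cite{SV1,SV2}) rather than from a Molien-type formula, so you should not expect to supply it separately. Second, the identification you need at $a_0$ is not merely that $QI_{r,s,a_0}$ is some $eM$, but that $\Lambda_{r,s,a_0}$ itself is an ${\bf e}H{\bf e}$-module of minimal support; that is the content you correctly attribute to \cite{EGL} (and \cite{F,BCES}), and it simultaneously gives the Hilbert series, making the separate upper bound unnecessary. With those adjustments your plan coincides with the cited proof.
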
 

\begin{remark} 1.  By analogy with Conjecture 4.8 of \cite{BCES}, we expect that the exceptional set for Theorem \ref{CMnessrs}(ii) is 
$q^m=t^{\pm n}$, where $1\le m\le r$, $1\le n\le s$ (assuming $q,t\ne 0$ and are not roots of unity).

2. The formula for the Hilbert series in Theorem \ref{classi}(ii) is given in \cite {SV1} and in the $q,t$-case in \cite{SV2}.  
\end{remark} 

\begin{proof} Without loss of generality, we may assume that $c=1$. 

(i)  This is proved in \cite{SV2}, but we reproduce the proof for reader's convenience. 
Consider the system of equations $Q_i=0$, $i\ge 1$. 
It can be written as 
\begin{equation}\label{sys1} 
\sum_{j=1}^r (qy_j)^i+\sum_{l=1}^s z_l^i=\sum_{j=1}^r y_j^i+\sum_{l=1}^s (tz_l)^i. 
\end{equation} 
Suppose that this system has a nontrivial solution. Let $m$ be the number of nonzero coordinates $y_j$ and $n$ be the number of nonzero coordinates
$z_l$ in this solution. Since \eqref{sys1} 
holds for each $i$, each nonzero term on the LHS must equal some nonzero term on the RHS. By taking products, this implies that 
$q^m=t^n$. Note that $m,n>0$ since $m+n>0$ and $q,t$ are not roots of $1$. Conversely, suppose $q^m=t^n$.
If $q=t=0$, then \eqref{sys1} has a nonzero solution $y_1=z_1=1$, $y_j=z_l=0$ for $j,l\ge 2$. If $q,t$ are not both zero, then  taking
$y_1=t^n, y_2=t^nq,...,y_n=t^nq^{m-1}$, $z_1=q^mt^{n-1},...,z_{n-1}=q^mt, z_n=q^m$, and the rest of $y_j$ and $z_l$ to be zero, we also obtain a nontrivial solution. Thus, the result follows from Proposition \ref{fingen1}.  

(ii) Let $q=t^{-a}$ and $t\to 1$. Then $\Lambda_{r,s,\aa}$ degenerates to $\Lambda_{r,s,a}$. 
 By Theorem \ref{classi}(ii), for generic $a$, the algebra $\Lambda_{r,s,a}$ is a free module 
of finite rank over $\Bbb C[Q_1,...,Q_{r+s}]$, with Hilbert series $h_{r,s}(u)$. Thus, our job is to show that 
for Weil generic $q,t$, the Hilbert series of $\Lambda_{r,s,\aa}$ is dominated by $h_{r,s}(u)$ coefficientwise
(this will imply that it actually equals to $h_{r,s}(u)$). 

This is proved in \cite{SV2}, Section 5, and we reproduce the proof for reader's convenience. 
Let $\Lambda$ be the ring of symmetric functions, and define a surjective homomorphism $\phi: \Lambda\to \Lambda_{r,s,\aa}$ 
given by the formula $\phi(p_i)=Q_i$, where $p_i$ are the power sums. By Theorem 5.6 of \cite{SV2}, for generic $q,t$, 
${\rm Ker}\phi$ has a basis consisting of 
Macdonald polynomials $P_\lambda$, where $\lambda$ is a Young diagram that does not fit into the fat $(r,s)$-hook 
(i.e., $\lambda_{r+1}>s$), while $\Lambda_{r,s,\aa}$ has a basis formed by $P_\lambda(q,t)$ for $\lambda$ fitting into the $(r,s)$-hook (i.e., $\lambda_{r+1}\le s$), with the Hilbert series $h_{r,s}(u)$. This means that the kernel does not shrink as we deform the limiting case to Weil generic $q,t$, as desired. 
\end{proof} 

If $a_i=\frac{q^i-1}{1-t^i}$, we will denote the algebra $\Lambda_{r,s,\aa}$ by $\Lambda_{r,s,q,t}$. 

\subsection{The quasi-invariance conditions} 
Below we will use the following proposition, due to Sergeev and Veselov. 

\begin{proposition}\label{quasi-inva} (i) (\cite{SV2}) If $q,t\in \Bbb C^\times$ are not roots of unity, and $q^m\ne t^n$ for $n,m\ge 1$ then 
$\Lambda_{r,s,q,t}$ for $a_i=\frac{q^i-1}{1-t^i}$ is the algebra of symmetric polynomials in $y_j$ and in $z_l$ satisfying the quasi-invariance conditions
\begin{equation}\label{qquas}
f(y_1,...,ty_j,...,y_r,z_1,...,qz_l,...,z_s)=f(y_1,...,y_j,...,y_r,z_1,...,z_l,...,z_s),
\end{equation}
when $y_j=z_l$ for all $j\in [1,r],l\in [1,s]$. 

(ii) (\cite{SV1}) 
If $a$ is generic then $\Lambda_{r,s,a}$ is the algebra of symmetric polynomials in $y_j$ and in $z_l$ satisfying the quasi-invariance conditions
\begin{equation}\label{clquas}  
((a\partial_{z_l}-\partial_{y_j})f)(y_1,...,y_j,...,y_r,z_1,...,z_l,...,z_s)=0,
\end{equation} 
when $y_j=z_l$ for all $j\in [1,r],l\in [1,s]$. 
\end{proposition}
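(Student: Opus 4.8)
\medskip
\noindent\emph{Sketch of the intended argument.} Write $C_{r,s,q,t}$ for the algebra of polynomials symmetric in $y_1,\dots,y_r$ and (separately) in $z_1,\dots,z_s$ that satisfy (\ref{qquas}) on every hyperplane $\{y_j=z_l\}$, and $C_{r,s,a}$ for the differential analogue defined by (\ref{clquas}). The plan is to prove $\Lambda_{r,s,q,t}=C_{r,s,q,t}$ (for (i)) and $\Lambda_{r,s,a}=C_{r,s,a}$ (for (ii)).

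First I would check the easy inclusions $\Lambda_{r,s,q,t}\subseteq C_{r,s,q,t}$ and $\Lambda_{r,s,a}\subseteq C_{r,s,a}$. Each generator $Q_i$ is doubly symmetric, and the condition (\ref{qquas}) is preserved by sums and products (one substitutes the shifted variables and then restricts to the hyperplane), so it suffices to check the generators: on $\{y_j=z_l=x\}$ the substitution $y_j\mapsto ty_j$, $z_l\mapsto qz_l$ changes $Q_i$ by $a_i(t^i-1)x^i+(q^i-1)x^i$, which vanishes precisely because $a_i=(q^i-1)/(1-t^i)$. The same computation with the operator $a\partial_{z_l}-\partial_{y_j}$ together with the Leibniz rule gives $Q_i\in C_{r,s,a}$ when $a_i\equiv a$.

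For the reverse inclusions I would first reduce the $rs$ quasi-invariance conditions to the single one with $(j,l)=(1,1)$: conjugating that condition by the transposition of $y_1$ with $y_j$ (and of $z_1$ with $z_l$) acting on polynomials, and using double symmetry of $f$, turns it into the $(j,l)$ condition. It then suffices to prove $\dim_{\mathbb{C}} C_{r,s,q,t}[d]\le\dim_{\mathbb{C}}\Lambda_{r,s,q,t}[d]$ in every degree $d$, since combined with the inclusion just established this forces equality of the graded pieces. The right-hand side equals the coefficient of $u^d$ in $h_{r,s}(u)$: this is Theorem \ref{CMnessrs}(ii) for Weil-generic $q,t$, and the input one really needs — that $\Ker(\phi\colon\Lambda\to\mathbb{C}[y,z])$, $p_i\mapsto Q_i$, is spanned by the Macdonald polynomials $P_\lambda$ with $\lambda_{r+1}>s$ — is available under the hypotheses of (i) from Sergeev--Veselov \cite{SV2}. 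So one is reduced to bounding the Hilbert series of the quasi-invariant algebra $C_{r,s,q,t}$ from above by $h_{r,s}(u)$.

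This upper bound is the crux, and the main obstacle. I would argue locally along the arrangement $\bigcup_{j,l}\{y_j=z_l\}$, in the spirit of the theory of quasi-invariants \cite{FV1,BEG}: for a single hyperplane the condition (\ref{qquas}) cuts $\mathbb{C}[y,z]^{S_r\times S_s}[d]$ down by an explicitly computable codimension, and for the whole arrangement one would add these contributions by inclusion--exclusion over its intersection lattice, whose strata are the loci where several groups of $y$'s and $z$'s collide. The hypothesis $q^m\ne t^n$ for $1\le m\le r$, $1\le n\le s$ is exactly what rules out unexpected coincidences among the multiplicative shifts $q^i,t^i$ on such a stratum, so that the local codimensions add up without collapsing; the sum should come out to $\dim\mathbb{C}[y,z]^{S_r\times S_s}[d]-[u^d]h_{r,s}(u)$, and reconciling this count with the combinatorics of the $(r,s)$-hook is the technical heart (and is essentially the content of \cite{SV2}). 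Finally, part (ii) follows either by the degeneration $q=t^{-a}$, $t\to1$ — in the limit (\ref{qquas}) becomes (\ref{clquas}) upon differentiating in $t$ at $t=1$, and since $h_{r,s}(u)$ is independent of the parameters the family is flat, forcing $C_{r,s,a}=\Lambda_{r,s,a}$ — or by repeating the argument above verbatim with $a\partial_{z_l}-\partial_{y_j}$ in place of the shift operator and Theorem \ref{classi}(ii) in place of Theorem \ref{CMnessrs}(ii).
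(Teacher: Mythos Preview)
The paper does not prove this proposition at all; it is simply quoted from Sergeev--Veselov \cite{SV1,SV2} and used as a black box. So there is no ``paper's own proof'' to compare against, and your task was really to supply what the paper outsources.

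Your easy inclusions $\Lambda\subseteq C$ are correct, and the reduction of the $rs$ conditions to a single one by symmetry is fine. Your overall strategy for the reverse inclusion --- match Hilbert series degree by degree --- is also the right shape. The genuine gap is the upper bound $\dim C_{r,s,q,t}[d]\le [u^d]h_{r,s}(u)$. You propose to obtain it by inclusion--exclusion over the intersection lattice of $\bigcup\{y_j=z_l\}$, ``in the spirit of'' \cite{FV1,BEG}, but that theory is about Coxeter arrangements, where the reflection group lets one control how local conditions interact; the arrangement here is not of Coxeter type, and it is not at all clear that the local codimensions simply add. You acknowledge this yourself (``the technical heart \dots\ is essentially the content of \cite{SV2}''), which is honest but means the argument as written is circular: you are proving the Sergeev--Veselov proposition by deferring its hard part to Sergeev--Veselov. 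What \cite{SV2} actually do is construct a spanning set for $C_{r,s,q,t}$ by super Macdonald polynomials indexed by diagrams in the $(r,s)$-hook, which directly gives the upper bound; this is a substantive piece of Macdonald theory, not a formal inclusion--exclusion. Your degeneration argument for (ii) also needs care: equality $\Lambda_{q,t}=C_{q,t}$ for Weil generic $(q,t)$ does not automatically survive the limit $t\to 1$, since the dimension of $C[d]$ can jump up on the special locus; you would need an independent upper bound on $\dim C_{r,s,a}[d]$, which is again the missing ingredient.
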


\section{Type $(1,r,s)$}

\subsection{The result} 
As before, let $q,t\in \Bbb C^\times$ be not roots of unity such that $q\ne t$. Let $r,s$ be positive integers.  
Consider the polynomials 
$$
P_{r,s,i,q,t}:=\frac{q^i-t^i}{1-t^i}x^i+\frac{q^i-1}{1-t^i}(y_1^i+...+y_r^i)+(z_1^i+...+z_s^i).
$$
Let $A_{r,s, q,t}$ be the algebra generated by the $P_{r,s,i,q,t}$, $i\ge 1$. 

We will also be interested in the limiting case $q=t^{-a}$, $t\to 1$. In this limit, 
we get the polynomials 
$$
P_{r,s,i,a}:=(a+1)x^i+a(y_1^i+...+y_r^i)+(z_1^i+...+z_s^i) 
$$
Let $A_{r,s,a}$ be the algebra generated by the $P_{r,s,i,a}$, $i\ge 1$. 

In both cases, when no confusion is possible, 
we will denote the generating polynomials simply by $P_i$. 

Note that if $a_i=\frac{q^i-1}{1-t^i}$ then 
the restriction of $Q_{r+1,s+1,i,a_i}$ to the hyperplane $y_{r+1}=z_{s+1}$
is $P_{r,s,i,q,t}$, where $x=y_{r+1}=z_{s+1}$. Similarly, the restriction of 
$Q_{r+1,s+1,i,a}$ is $P_{r,s,i,a}$. Thus, we have an epimorphism 
$\phi_{q,t}: \Lambda_{r+1,s+1,q,t}\to A_{r,s,q,t}$, which degenerates 
to an epimorphism $\phi_a: \Lambda_{r+1,s+1,a}\to A_{r,s,a}$.  

\begin{theorem}\label{CMness1rs}  
(i) The algebra $A_{r,s,a}$ is CM for generic $a$. Moreover, the Hilbert series of this algebra is given by the formula 
$$
h_{A_{r,s,a}}(u)=h_{\Lambda_{r+1,s+1,a}}(u)-\frac{u^{2(r+1)(s+1)}}{(u,u)_{r+1}(u,u)_{s+1}}.
$$

(ii) For Weil generic $q,t$, the algebra $A_{r,s,q,t}$ is CM with the same Hilbert series. 
\end{theorem}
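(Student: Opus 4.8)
The plan is to exploit the given epimorphism $\phi_{q,t}\colon\Lambda_{r+1,s+1,q,t}\to A_{r,s,q,t}$ by identifying its kernel completely and explicitly, and then to read off both the Hilbert series and the Cohen--Macaulay property from the resulting short exact sequence. Write $\mathcal S$ for the ring $\Bbb C[y_1,\dots,y_{r+1},z_1,\dots,z_{s+1}]^{S_{r+1}\times S_{s+1}}$ of separately symmetric polynomials; it is a polynomial ring on $e_1(y),\dots,e_{r+1}(y),e_1(z),\dots,e_{s+1}(z)$, with Hilbert series $1/\big((u,u)_{r+1}(u,u)_{s+1}\big)$. Set
\[
D:=\prod_{j=1}^{r+1}\prod_{l=1}^{s+1}(y_j-z_l),\qquad
\widetilde D:=\prod_{j=1}^{r+1}\prod_{l=1}^{s+1}(qy_j-tz_l);
\]
both have degree $(r+1)(s+1)$, and since $q\ne t$ they are coprime in $\Bbb C[y_1,\dots,y_{r+1},z_1,\dots,z_{s+1}]$. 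The heart of the argument is the claim that $\Ker\phi_{q,t}=D\widetilde D\cdot\mathcal S$ (and, in the classical case, $\Ker\phi_a=D^2\cdot\mathcal S$). Given this, multiplication by $D\widetilde D$ is an injective graded $\Lambda_{r+1,s+1,q,t}$-linear map with image $\Ker\phi_{q,t}$, and the rest is formal.

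To establish the claim I would argue as follows. Because every element of $\Lambda_{r+1,s+1,q,t}$ is symmetric in the $y_j$ and in the $z_l$, the kernel of $\phi_{q,t}$ --- the ideal of $f$ restricting to zero on $\{y_{r+1}=z_{s+1}\}$ --- is exactly the ideal of those $f\in\Lambda_{r+1,s+1,q,t}$ vanishing on the whole union $\bigcup_{j,l}\{y_j=z_l\}$. For such $f$, unique factorisation gives $f=Dg$ with $g\in\mathcal S$. Substituting $f=Dg$ into the quasi-invariance condition \eqref{qquas} along $\{y_j=z_l\}$, and using that the substitution $y_j\mapsto ty_j$, $z_l\mapsto qz_l$ does \emph{not} make $D$ divisible by $y_j-z_l$ (this is where $q\ne t$ enters), one finds that $g$ must be divisible by $qy_j-tz_l$ for every $j,l$, hence by $\widetilde D$, with $g/\widetilde D\in\mathcal S$; thus $f\in D\widetilde D\cdot\mathcal S$. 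Conversely, by Proposition \ref{quasi-inva}(i) it suffices to check that $D\widetilde D\cdot h\in\Lambda_{r+1,s+1,q,t}$ for every $h\in\mathcal S$ (it then automatically lies in $\Ker\phi_{q,t}$, as $D$ vanishes on $\{y_{r+1}=z_{s+1}\}$), i.e.\ that $D\widetilde D\cdot h$ is separately symmetric (clear) and satisfies \eqref{qquas}; the latter holds along $\{y_j=z_l\}$ because the substitution carries the factor $qy_j-tz_l$ of $\widetilde D$ to $qt(y_j-z_l)$, so both sides of \eqref{qquas} vanish there. The classical case is identical, using \eqref{clquas} and $a\ne-1$ in place of \eqref{qquas} and $q\ne t$.

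From the claim one gets, with $n:=2(r+1)(s+1)$, the exact sequence of graded $\Lambda_{r+1,s+1,q,t}$-modules
\[
0\longrightarrow \mathcal S(-n)\ \xrightarrow{\ \cdot\,D\widetilde D\ }\ \Lambda_{r+1,s+1,q,t}\ \xrightarrow{\ \phi_{q,t}\ }\ A_{r,s,q,t}\longrightarrow 0,
\]
where $\mathcal S$ is a $\Lambda_{r+1,s+1,q,t}$-module through the inclusion $\Lambda_{r+1,s+1,q,t}\subseteq\mathcal S$, which is finite because $\Lambda_{r+1,s+1,q,t}$ is finitely generated (Theorem \ref{CMnessrs}(i)). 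Comparing Hilbert series gives
\[
h_{A_{r,s,q,t}}(u)=h_{\Lambda_{r+1,s+1,q,t}}(u)-\frac{u^{\,2(r+1)(s+1)}}{(u,u)_{r+1}(u,u)_{s+1}},
\]
and since $h_{\Lambda_{r+1,s+1,q,t}}=h_{r+1,s+1}=h_{\Lambda_{r+1,s+1,a}}$ this is the formula of part (i); in particular (i) and (ii) have the same Hilbert series. For the Cohen--Macaulay property: $\mathcal S$ is a polynomial ring on $r+s+2$ generators, hence a Cohen--Macaulay $\Lambda_{r+1,s+1,q,t}$-module of dimension $r+s+2$, and $\Lambda_{r+1,s+1,q,t}$ is Cohen--Macaulay of dimension $r+s+2$ by Theorem \ref{CMnessrs}(ii); the depth lemma applied to the sequence gives $\depth A_{r,s,q,t}\ge\min\big(\depth\Lambda_{r+1,s+1,q,t},\,\depth\mathcal S-1\big)=r+s+1$. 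Since $A_{r,s,q,t}$ is a nonzero quotient of the domain $\Lambda_{r+1,s+1,q,t}$ by a nonzero ideal, $\dim A_{r,s,q,t}\le r+s+1$; as $\depth\le\dim$ always, we conclude $\depth A_{r,s,q,t}=\dim A_{r,s,q,t}=r+s+1$, so $A_{r,s,q,t}$ is Cohen--Macaulay. Part (i) follows in the same way with Theorem \ref{classi}(ii) in place of Theorem \ref{CMnessrs}(ii) and $D^2$ in place of $D\widetilde D$; alternatively, once (i) is in hand, (ii) also follows by the degeneration $q=t^{-a}$, $t\to1$, exactly as in the proof of Theorem \ref{CMnessrs}(ii).

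The one genuinely nontrivial step is the exact identification of $\Ker\phi$. This is where I expect the real work to be: it requires the quasi-invariance descriptions of $\Lambda_{r+1,s+1,q,t}$ and $\Lambda_{r+1,s+1,a}$ from Proposition \ref{quasi-inva}, and a careful divisibility analysis in the polynomial ring, the crucial asymmetry being that the substitution $y_j\mapsto ty_j$, $z_l\mapsto qz_l$ preserves divisibility of $\widetilde D$ by $y_j-z_l$ but destroys it for $D$ --- precisely what forces the extra factor $\widetilde D$ (respectively $D$) onto the quotient and pins the kernel down exactly. Everything downstream --- the short exact sequence, the depth-lemma argument, and the Hilbert-series identity --- is then routine.
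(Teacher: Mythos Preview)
Your argument is correct. The approach overlaps substantially with the paper's but diverges in two places worth recording.

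For part (i), both you and the paper identify $\Ker\phi_a$ with $D^2\cdot\mathcal S$ (the paper's Lemma~\ref{ideal}; the paper writes the ambient ring as the full polynomial ring, but the Hilbert series formula there shows the separately symmetric ring $\mathcal S$ is what is meant). The difference is in how Cohen--Macaulayness is then extracted. You apply the depth lemma directly to $0\to\mathcal S(-n)\to\Lambda_{r+1,s+1,a}\to A_{r,s,a}\to 0$. The paper instead works over $B=\Bbb C[Q_1,\dots,Q_{r+s+1}]$ and uses a small homological lemma (Lemma~\ref{homalg}): from $0\to C'\to C\to C/C'\to 0$ with $C=\mathcal S$, $C'=\Lambda_{r+1,s+1,a}$ one sees $C/C'$ has projective dimension $\le 1$; feeding this into $0\to C'/I\to C/I\to C/C'\to 0$ and using that $C/I$ is a hypersurface (hence CM, hence $B$-free once one checks finiteness) gives $C'/I=A_{r,s,a}$ free over $B$. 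The two arguments are essentially dual and of the same weight.

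For part (ii) the routes genuinely diverge. You prove the \emph{exact} equality $\Ker\phi_{q,t}=D\widetilde D\cdot\mathcal S$: the inclusion $\subseteq$ via the divisibility analysis you sketch, and $\supseteq$ by invoking Proposition~\ref{quasi-inva}(i) to recognise $D\widetilde D\cdot h$ as lying in $\Lambda_{r+1,s+1,q,t}$. You then rerun the depth-lemma argument verbatim. The paper proves only the inclusion $D\widetilde D\cdot\mathcal S\subseteq\Ker\phi_{q,t}$, which gives a lower bound on the kernel's Hilbert series; it then combines this with semicontinuity under the degeneration $q=t^{-a}$, $t\to 1$ to conclude that the Hilbert series of $A_{r,s,q,t}$ agrees with that of $A_{r,s,a}$, whence CM-ness follows from part (i). Your route is more direct and yields the exact kernel as a bonus; the paper's route does not need the full quasi-invariance characterisation of $\Lambda_{r+1,s+1,q,t}$ for the reverse inclusion, so it is marginally more robust in its hypotheses on $(q,t)$.
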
 

In the special case $r=1$, 
this confirms the first part of Conjecture 7.4 in \cite{BCES}. 

A proof of Theorem \ref{CMness1rs} is given in the next subsection. 

\subsection{Proof of Theorem \ref{CMness1rs}}
We will need the following simple lemma from homological algebra. 
     
\begin{lemma}\label{homalg} 
Let $C$ be a commutative algebra, $I$ an ideal in $C$, and $C'$ a subalgebra of $C$ containing $I$.
Let $B\subset C'$ be a subalgebra such that $C,C',C/I$ are all projective modules over $B$.   Then so is $C'/I$.
\end{lemma}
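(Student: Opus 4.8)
The plan is to deduce projectivity of $C'/I$ over $B$ from a short exact sequence relating the four modules in play. First I would observe that since $I \subseteq C'$ and $C' \subseteq C$, there is a short exact sequence of $B$-modules
$$
0 \to C' \to C \to C/C' \to 0,
$$
and also, since $I \subseteq C'$, a short exact sequence
$$
0 \to C'/I \to C/I \to C/C' \to 0.
$$
Here $C/C'$ makes sense as a $B$-module quotient (not as a ring), and the natural map $C'/I \to C/I$ is injective precisely because $C' \cap I = I$, i.e.\ $I \subseteq C'$. The key point is that the cokernel in both sequences is the \emph{same} module $C/C'$.

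Next I would extract the needed homological consequence. From the first sequence, $C$ and $C'$ are projective over $B$ by hypothesis, so $C/C'$ has projective dimension at most $1$ over $B$; more precisely, for any $B$-module $N$ one gets $\Ext^i_B(C/C', N) = 0$ for $i \ge 2$, and $\Ext^1_B(C/C', N)$ is a subquotient of the relevant $\Ext$ of $C'$, which vanishes — wait, I need to be careful: projectivity of $C$ and $C'$ gives the long exact sequence $0 \to \Hom_B(C/C',N) \to \Hom_B(C,N) \to \Hom_B(C',N) \to \Ext^1_B(C/C',N) \to \Ext^1_B(C,N) = 0$, so $\Ext^1_B(C/C',N)$ need not vanish, but $\Ext^i_B(C/C',N) = 0$ for all $i \ge 2$. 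That is exactly what is needed: $C/C'$ has projective dimension at most $1$ over $B$.

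Then I would apply this to the second short exact sequence. Its long exact $\Ext$ sequence in a module $N$ reads
$$
\cdots \to \Ext^i_B(C/C', N) \to \Ext^i_B(C/I, N) \to \Ext^i_B(C'/I, N) \to \Ext^{i+1}_B(C/C', N) \to \cdots.
$$
For $i \ge 1$ we have $\Ext^i_B(C/I, N) = 0$ because $C/I$ is projective over $B$ by hypothesis, and $\Ext^{i+1}_B(C/C', N) = 0$ for $i \ge 1$ since $i + 1 \ge 2$ and $C/C'$ has projective dimension $\le 1$. Hence $\Ext^i_B(C'/I, N) = 0$ for all $i \ge 1$ and all $B$-modules $N$, which means $C'/I$ has projective dimension $0$, i.e.\ is projective over $B$. (One should note the modules here may be infinitely generated / graded; in the intended application everything is finitely generated graded over a polynomial ring, where projective equals free, so no subtlety arises, but the $\Ext$-vanishing argument is valid in general.)

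I do not anticipate a serious obstacle here — the lemma is genuinely elementary once the two short exact sequences with common cokernel $C/C'$ are written down. The only point requiring a moment's care is confirming that $C'/I \hookrightarrow C/I$ is injective, which is immediate from $I \subseteq C'$, and making sure one reads off $\mathrm{pd}_B(C/C') \le 1$ rather than expecting $C/C'$ itself to be projective. Everything else is a diagram chase in the $\Ext$ long exact sequences.
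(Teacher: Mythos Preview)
Your proof is correct and follows essentially the same approach as the paper: both write down the two short exact sequences with common cokernel $C/C'$, use projectivity of $C$ and $C'$ to conclude $\mathrm{pd}_B(C/C')\le 1$, and then read off projectivity of $C'/I$ from the second sequence. The paper phrases the last step as ``the second sequence must also be a projective resolution,'' while you spell it out via the $\Ext$ long exact sequence, but this is the same argument.
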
 

\begin{proof} The short exact sequence
$$
0\to C'\to C\to C/C'\to 0
$$
is a $B$-projective resolution of $C/C'$, which therefore has homological dimension $\le 1$.  
Since $C/I$ is $B$-projective, the short exact sequence
$$
0\to C'/I\to C/I\to C/C'\to 0
$$
must also be a projective resolution, and thus $C'/I$ is projective.
\end{proof} 

We will apply Lemma \ref{homalg} in the following situation: 
$$
C=\Bbb C[y_1,...,y_{r+1},z_1,...,z_{s+1}],\ C'=\Lambda_{r+1,s+1,a},\ I={\rm Ker}\phi_a.
$$ 
For this, we need to prove another auxiliary lemma. 

\begin{lemma}\label{ideal} $I$ is an ideal in $C$. 
More precisely, $I$ is the principal ideal generated 
by the polynomial
$$
D_{r+1,s+1}(\bold y,\bold z):=\prod_{j=1}^{r+1}\prod_{l=1}^{s+1}(y_j-z_l)^2,
$$
and thus its Hilbert series is given by the formula 
$$
h_I(u)=\frac{u^{2(r+1)(s+1)}}{(u;u)_{r+1}(u;u)_{s+1}},
$$
\end{lemma}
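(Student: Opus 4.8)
The plan is to identify $I=\Ker\phi_a$ with the principal ideal $(D_{r+1,s+1})$ in $C=\Bbb C[y_1,\dots,y_{r+1},z_1,\dots,z_{s+1}]$, where $D_{r+1,s+1}=\prod_{j,l}(y_j-z_l)^2$, and then read off the Hilbert series. First I would show the inclusion $(D_{r+1,s+1})\subseteq I$. By Proposition \ref{quasi-inva}(ii), every $f\in\Lambda_{r+1,s+1,a}=C'$ is symmetric in the $y$'s and in the $z$'s and satisfies $((a\partial_{z_l}-\partial_{y_j})f)=0$ on each hyperplane $y_j=z_l$. The epimorphism $\phi_a$ is restriction to $y_{r+1}=z_{s+1}$, so $I$ consists of the $f\in C'$ vanishing on that hyperplane; by the $S_{r+1}\times S_{s+1}$-symmetry such an $f$ in fact vanishes on all hyperplanes $y_j=z_l$, and the quasi-invariance (derivative) condition then forces a double zero along each of them. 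Since the linear forms $y_j-z_l$ are pairwise non-proportional, $C$ is a UFD, and the $(y_j-z_l)^2$ are pairwise coprime, $f$ is divisible by their product $D_{r+1,s+1}$. Hence $I\subseteq(D_{r+1,s+1})$ as a subset of $C$.

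For the reverse inclusion I must check $D_{r+1,s+1}\in C'$ and that $D_{r+1,s+1}$ restricts to zero on $y_{r+1}=z_{s+1}$; the latter is obvious (the factor $(y_{r+1}-z_{s+1})^2$), so the point is membership in the subalgebra $C'=\Lambda_{r+1,s+1,a}$. Here I would invoke Proposition \ref{quasi-inva}(ii) in the other direction: $D_{r+1,s+1}$ is manifestly symmetric in the $y$'s and in the $z$'s, and on each hyperplane $y_j=z_l$ it vanishes to order $2$, so $(a\partial_{z_l}-\partial_{y_j})D_{r+1,s+1}$ (being a first derivative of something with a double zero) also vanishes there; thus $D_{r+1,s+1}$ lies in the quasi-invariant algebra, which for generic $a$ is exactly $\Lambda_{r+1,s+1,a}$. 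Therefore $D_{r+1,s+1}\in C'$, so $D_{r+1,s+1}\cdot C'\subseteq I$; combined with $I\subseteq(D_{r+1,s+1})\cap C'$, and using that $(D_{r+1,s+1})\cap C'$ is nothing but $D_{r+1,s+1}\cdot C'$ once we know $D_{r+1,s+1}$ generates the ideal inside $C$ and multiplication by it is injective, we get $I=D_{r+1,s+1}\cdot C$ intersected appropriately — more carefully, $I=\{f\in C': f|_{y_{r+1}=z_{s+1}}=0\}=D_{r+1,s+1}\cdot C$, since any multiple $D_{r+1,s+1}g$ with $g\in C$ already lies in $C'$ by the same quasi-invariance check applied to the product (double zeros persist). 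This simultaneously shows $I$ is an ideal of $C$ (not merely of $C'$) and that it is the principal ideal $(D_{r+1,s+1})$.

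Finally, the Hilbert series: multiplication by $D_{r+1,s+1}$ is an injective degree-$2(r+1)(s+1)$ map $C\to C$ with image $I$, so $h_I(u)=u^{2(r+1)(s+1)}h_C(u)$, and since $C=\Bbb C[Q_1,\dots]$ is the full polynomial ring in $r+s+2$ variables generated over $\Bbb C[Q_1,\dots,Q_{r+1},z_1,\dots]$... more simply, $C$ is graded so that as a module over $\Bbb C[Q_1,\dots,Q_{r+1+s+1}]$ — actually the cleanest statement is that $C$ is free over the subalgebra of symmetric-in-$y$, symmetric-in-$z$ polynomials with Hilbert series $\tfrac{1}{(u;u)_{r+1}(u;u)_{s+1}}$ — wait, that is the Hilbert series of the invariant subalgebra, which is not $C$ itself. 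I would instead argue directly: $C'=\Lambda_{r+1,s+1,a}$ has Hilbert series $h_{\Lambda_{r+1,s+1,a}}(u)$ by Theorem \ref{classi}, and the exact sequence $0\to I\to C'\to A_{r,s,a}\to 0$ gives $h_{A_{r,s,a}}=h_{\Lambda_{r+1,s+1,a}}-h_I$; but for the stated closed form of $h_I$ I use that $I=D_{r+1,s+1}\cdot C$ with $C$ the polynomial ring, whose relevant Hilbert series here is the one of the $y$-symmetric, $z$-symmetric subalgebra only if... The honest resolution: $I$ as a module over the invariant subalgebra $\Bbb C[p_i(y),p_j(z)]$ is free of rank one generated in degree $2(r+1)(s+1)$, giving $h_I(u)=\tfrac{u^{2(r+1)(s+1)}}{(u;u)_{r+1}(u;u)_{s+1}}$. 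I expect the main obstacle to be precisely this last bookkeeping — pinning down over which base ring $I$ is free of rank one so that the Hilbert series comes out as claimed — together with the care needed to verify that $D_{r+1,s+1}\cdot g\in\Lambda_{r+1,s+1,a}$ for \emph{all} $g\in C$ (not just $g\in C'$), which is what makes $I$ an honest ideal of $C$; this again reduces to the elementary observation that a polynomial divisible by $(y_j-z_l)^2$ automatically satisfies the first-order quasi-invariance condition on $y_j=z_l$, applied across all pairs $(j,l)$.
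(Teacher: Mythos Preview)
Your approach is exactly the paper's: establish the two inclusions $D_{r+1,s+1}C\subseteq I$ and $I\subseteq D_{r+1,s+1}C$ using the quasi-invariance description of $C'=\Lambda_{r+1,s+1,a}$ from Proposition~\ref{quasi-inva}(ii). The divisibility argument for $I\subseteq D_{r+1,s+1}C$ (symmetry forces vanishing on every $y_j=z_l$, quasi-invariance upgrades this to a double zero, UFD gives divisibility by the product) is correct as you have it.

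The gap is in the reverse inclusion, and it is precisely the point you yourself flag as the obstacle. You assert that $D_{r+1,s+1}g\in C'$ for every $g$ in the \emph{full} polynomial ring, because the double zeros along each $y_j=z_l$ force the first-order quasi-invariance condition. But membership in $C'$ also requires symmetry under $S_{r+1}\times S_{s+1}$, and $D_{r+1,s+1}g$ is symmetric only when $g$ is. So $D_{r+1,s+1}\cdot(\text{full ring})\not\subset C'$, the claimed equality $I=D_{r+1,s+1}\cdot(\text{full ring})$ is false, and this is exactly why your Hilbert series bookkeeping refuses to cooperate: $u^{2(r+1)(s+1)}/(1-u)^{r+s+2}$ is not the stated $h_I$. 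The fix --- which is what the paper means, despite writing $C=\Bbb C[y_1,\dots,z_{s+1}]$ --- is to take $C$ to be the $S_{r+1}\times S_{s+1}$-invariant subring $\Bbb C[y]^{S_{r+1}}\otimes\Bbb C[z]^{S_{s+1}}$. This is still a polynomial algebra (in the elementary symmetric functions), so the lemma's phrasing ``principal ideal in $C$'' remains meaningful; now $D_{r+1,s+1}g$ is automatically symmetric for $g\in C$, your quasi-invariance check goes through, $I=D_{r+1,s+1}C$, and the Hilbert series is immediate: $h_I(u)=u^{2(r+1)(s+1)}h_C(u)=u^{2(r+1)(s+1)}/\bigl((u;u)_{r+1}(u;u)_{s+1}\bigr)$. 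This is the ``honest resolution'' you arrive at heuristically at the end; once $C$ is identified correctly there is no further bookkeeping to do.
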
 

\begin{proof} By Proposition \ref{quasi-inva}(ii), $C'$ 
is the algebra of polynomials symmetric in $y_j$ and $z_l$ and 
satisfying the quasi-invariance condition
$$
((a\partial_{y_j}-\partial_{z_l})f)(y_1,...,y_j,...,y_{r+1},z_1,...,z_l,...,z_{s+1})=0
$$
when $y_j=z_l$ for all $j\in [1,r+1],l\in [1,s+1]$. 
This implies that $D_{r+1,s+1}C\subset I\subset C'$ (as the restriction of 
$D_{r+1,s+1}$ to the hyperplane $y_{r+1}=z_{s+1}$ is zero, and any multiple of 
$D_{r+1,s+1}$ satisfies the quasi-invariance condition). Also, if $f\in I$ then its restriction to the hyperplane 
$y_j=z_l$ is zero and it satisfies the quasi-invariance condition, so must be divisible by $(y_j-z_l)^2$. 
Thus by symmetry $f$ is divisible by $D_{r+1,s+1}$. Thus $f\in D_{r+1,s+1}C$ and $D_{r+1,s+1}C=I$.
This implies all statements of the lemma. 
\end{proof} 

Now we prove part (i) of the theorem. 
To apply Lemma \ref{homalg}, we will now define $B:=\Bbb C[Q_1,...,Q_{r+s+1}]$
(where $Q_i:=Q_{r+1,s+1,i,a}$). Then $C$ is clearly free over $B$ (of infinite rank), as it is free 
of finite rank over $\Bbb C[Q_1,...,Q_{r+s+2}]$ by Serre's theorem (since 
$C$ is a polynomial algebra). Also, $C'$ is free over $B$ (of infinite rank), as 
it is free of finite rank over $\Bbb C[Q_1,...,Q_{r+s+2}]$ by 
Theorem \ref{classi}(ii) (since $C'$ is a CM algebra). Finally,  
$C/I$ is CM (as it is the ring of functions on a hypersurface). So to show that $C/I$ is 
free over $B$, it suffices to show that it is finitely generated as a module, i.e., the system 
of equations 
$$
Q_{r,s,i,a}(\bold y,\bold z)=0,\ i=1,...,r+s+1;\ D_{r+1,s+1}(\bold y,\bold z)=0
$$
has only the zero solution. By symmetry we may assume that $y_{r+1}=z_{s+1}$, so,  
substituting, we get 
$$
P_i(x,\bold y,\bold z)=0,\ i=1,...,r+s+1,
$$
which we know has only the zero solution (see \cite{BCES}, proof of Proposition 2.6). 
Thus, by Lemma \ref{homalg}, $C'/I=A_{r,s,a}$ is
a free module over $B$. It is also a finitely generated module. 
This implies that $A_{r,s,a}$ is a CM algebra with the claimed Hilbert series, as desired. 

Let us now prove part (ii) of the theorem. Since the algebra $A_{r,s,q,t}$ 
is generated by polynomials which deform the polynomials generating $A_{r,s,a}$, 
it suffices to show that the Hilbert series $h_{A_{r,s,q,t}}(u)$ is dominated coefficientwise by the Hilbert series 
$h_{A_{r,s,a}}(u)$ (this will imply that these two series are actually the same). By Theorem \ref{classi}(ii) 
and Theorem \ref{CMnessrs}, The Hilbert series of $\Lambda_{r,s,q,t}$ and $\Lambda_{r,s,a}$ are the same, 
so it suffices to check that the Hilbert series of ${\rm Ker}\phi_{q,t}$ is dominated from below 
by the Hilbert series of ${\rm Ker}\phi_a$. 

To this end, let 
$$
D_{r+1,s+1,b}(\bold y,\bold z):=\prod_{j=1}^{r+1}\prod_{l=1}^{s+1}(y_j-z_l)(y_j-bz_l).
$$
Then any multiple of $D_{r+1,s+1,tq^{-1}}$ satisfies the quasi-invariance condition 
of Proposition \ref{quasi-inva}(i), so $D_{r+1,s+1,tq^{-1}}C\subset {\rm Ker}\phi_{q,t}$, giving the desired 
lower bound for the Hilbert series. 

\subsection{The quasi-invariant description of $A_{r,s,a}$ and $A_{r,s,q,t}$}

The construction of the algebras $A_{r,s,a}$ and $A_{r,s,q,t}$ implies that they can be described by quasi-invariance conditions on hyperplanes. 
Namely, we have the following result. 

\begin{proposition}\label{quasi-inva1} 
(i) For Weil generic $q,t$ the algebra $A_{r,s,q,t}$ is the algebra of polynomials 
$f(x,\bold y,\bold z)$ symmetric under $S_r\times S_s$ which satisfy the following quasi-invariance conditions: 

(1) $f(x; y_1,\dots,y_{r-1},u; z_1,\dots,z_{s-1},u)=f(u; y_1,\dots,y_{r-1},x; z_1,\dots,z_{s-1},x)$;

(2) $f(x; y_1,\dots,y_{r-1},u; z_1,\dots,z_{s-1},u)=f(x; y_1,\dots,y_{r-1},tu; z_1,\dots,z_{s-1},qu)$;

(3) $f(x; y_1,\dots,y_{r-1},tq^{-1}x; z_1,\dots,z_s)=f(q^{-1}x; y_1,\dots,y_{r-1},z_1,\dots,z_s)$;

(4) $f(x; y_1,\dots,y_r,z_1,\dots,z_{s-1},qt^{-1}x)=f(xt^{-1}; y_1,\dots,y_r; z_1,\dots,z_{s-1},x)$. 

(ii) For generic $a$ the algebra $A_{r,s,a}$ is the algebra of polynomials 
$f(x,\bold y,\bold z)$ symmetric under $S_r\times S_s$ which satisfy the following quasi-invariance conditions: 

(1) $f(x; y_1,\dots,y_{r-1},u,z_1,\dots,z_{s-1},u)=f(u; y_1,\dots,y_{r-1},x; z_1,\dots,z_{s-1},x)$;

(2) $((\partial_{y_r}-a\partial_{z_s})f)(x; y_1,\dots,y_{r-1},u; z_1,\dots,z_{s-1},u)=0$;

(3) $(((a+1)\partial_{y_r}-a\partial_x)f)(x; y_1,\dots,y_{r-1},x; z_1,\dots,z_s)=0$;

(4) $(((a+1)\partial_{z_s}-\partial_x)f)(x; y_1,\dots,y_r; z_1,\dots,z_{s-1},x)=0$. 

\end{proposition}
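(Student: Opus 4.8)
The plan is to prove Proposition \ref{quasi-inva1} by combining the three epimorphisms already in play: the surjection $\Lambda\to \Lambda_{r+1,s+1}$ of Proposition \ref{quasi-inva}, the restriction map $\phi: \Lambda_{r+1,s+1}\to A_{r,s}$ to the hyperplane $y_{r+1}=z_{s+1}$, and the identification (from the proof of Theorem \ref{CMness1rs}) of $\ker\phi$ as the principal ideal generated by $D_{r+1,s+1}$ (resp.\ containing $D_{r+1,s+1,tq^{-1}}C$ and, by the Hilbert series count, equal to it for Weil generic $q,t$). Concretely: $A_{r,s}$ is the image under ``set $y_{r+1}=z_{s+1}=x$'' of the quasi-invariant algebra $\Lambda_{r+1,s+1}$, so an element of $A_{r,s}$ is exactly the restriction of an $S_{r+1}\times S_{s+1}$-symmetric quasi-invariant, modulo the restrictions that vanish. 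I would first record the easy inclusion: every $P_i$, hence every $f\in A_{r,s}$, satisfies conditions (1)--(4), by direct verification on generators (each $P_i$ restricts from a $Q_{r+1,s+1,i}$, and the four conditions are precisely the restrictions to the coordinate subspaces $y_r=z_s(=u)$, $y_r=x$, $z_s=x$, etc., of the $S_{r+1}\times S_{s+1}$-symmetry-plus-quasi-invariance of $Q_{r+1,s+1,i}$, written after the substitution $y_{r+1}=z_{s+1}=x$).

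For the reverse inclusion, suppose $f(x,\mathbf y,\mathbf z)$ is $S_r\times S_s$-symmetric and satisfies (1)--(4). I would \emph{lift} $f$ to a function $\tilde f(y_1,\dots,y_{r+1},z_1,\dots,z_{s+1})$ on the ambient polynomial ring $C$ in the following way: condition (1) is exactly the statement that $f$, viewed as a function of the $r+1$ variables $x,y_1,\dots,y_r$ and the $s+1$ variables $x,z_1,\dots,z_s$ with the two copies of $x$ identified, is invariant under the transposition swapping the distinguished $y$-slot with the distinguished $z$-slot; together with the $S_r\times S_s$-symmetry this forces $f$ to extend uniquely to an $S_{r+1}\times S_{s+1}$-symmetric polynomial $\tilde f$ after renaming $x=y_{r+1}=z_{s+1}$ --- but one must check this extension is polynomial, which is where conditions (3) and (4) (the statements pinning down the behaviour when $y_r=x$ or $z_s=x$) are needed, since a priori $\tilde f$ is only defined on the union of the hyperplanes $y_{r+1}=z_{s+1}$. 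Then conditions (2), (3), (4) together with $S_{r+1}\times S_{s+1}$-symmetry show $\tilde f$ satisfies all the quasi-invariance conditions \eqref{qquas} of Proposition \ref{quasi-inva}(i) on every subspace $y_j=z_l$ (the ``new'' conditions involving the variable $y_{r+1}=z_{s+1}$ are covered by (1), (3), (4) after permuting), so $\tilde f\in \Lambda_{r+1,s+1,q,t}$. Restricting $\tilde f$ back to $y_{r+1}=z_{s+1}$ recovers $f$, so $f=\phi(\tilde f)\in A_{r,s,q,t}$. The classical case (ii) is identical, reading the differential quasi-invariance conditions \eqref{clquas} in place of \eqref{qquas} and Proposition \ref{quasi-inva}(ii) in place of (i).

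The main obstacle I expect is precisely the polynomiality of the lift $\tilde f$: $f$ lives on the $S_{r+1}\times S_{s+1}$-orbit of the single hyperplane $\{y_{r+1}=z_{s+1}\}$, which is a reducible hypersurface, and extending a function given on such a set to a genuine polynomial on all of $\CC[y_1,\dots,y_{r+1},z_1,\dots,z_{s+1}]^{S_{r+1}\times S_{s+1}}$ is not automatic --- one needs the restriction to be ``compatible along the pairwise intersections'' $y_j=z_l,\,y_{j'}=z_{l'}$, and this compatibility is exactly what conditions (3) and (4) encode when one of the indices is the distinguished slot. I would handle this by working with the presentation of the quasi-invariant ring as functions on the variety $\bigcup_{w\in S_{r+1}\times S_{s+1}} w\cdot\{y_{r+1}=z_{s+1}\}$ and invoking that $\Lambda_{r+1,s+1}$, being CM (Theorems \ref{classi}, \ref{CMnessrs}), is the full ring of such functions; the surjectivity of $\phi$ onto that ring's hyperplane-restriction, combined with a dimension/Hilbert-series bookkeeping showing the algebra defined by (1)--(4) has Hilbert series $\le h_{A_{r,s}}(u)$ (which follows because imposing (1) alone cuts down to a quotient of $\Lambda_{r+1,s+1}$'s restriction and (2)--(4) are further constraints), then forces equality. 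A secondary point to be careful about is that for Weil generic $q,t$ one must know $\ker\phi_{q,t}$ is \emph{exactly} $D_{r+1,s+1,tq^{-1}}C$ and not larger, but this is already established in the proof of Theorem \ref{CMness1rs}(ii) via the Hilbert series match, so it may be cited rather than reproved.
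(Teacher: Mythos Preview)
Your approach matches the paper's, and in fact the paper's own proof is no more than what you have sketched: it simply asserts that conditions (1)--(4) together with $S_r\times S_s$-symmetry are exactly the equivalence relation on points of the hyperplane $y_{r+1}=z_{s+1}$ induced from the equivalence relation defining $\Lambda_{r+1,s+1}$ via Proposition~\ref{quasi-inva}, and then declares that ``this implies the desired statement.'' So your easy inclusion, and your recognition that the hard inclusion amounts to a lifting problem, are both on target and are all the paper itself offers.

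Where your elaboration drifts is in the diagnosis of the obstacle. Polynomiality of the lift is \emph{not} the issue: the $S_{r+1}\times S_{s+1}$-orbit of $H$ is the hypersurface $\{\prod_{j,l}(y_j-z_l)=0\}$, so any regular function on it (and condition (1) plus $S_r\times S_s$-symmetry does give a well-defined regular function there, the remaining intersection compatibilities being automatic from the $S_r\times S_s$-symmetry of $f$) lifts to a polynomial on the ambient space, which may then be averaged to be $S_{r+1}\times S_{s+1}$-symmetric. The genuine subtlety is that the quasi-invariance condition of Proposition~\ref{quasi-inva} compares the value of $\tilde f$ at a point of the hypersurface with its value at the shifted point $(\dots,ty_j,\dots,qz_l,\dots)$, which generically lies \emph{off} the hypersurface and hence depends on the choice of extension; conditions (2)--(4) constrain $f$ only on $H$, and it is not immediate that they force the existence of a quasi-invariant extension. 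Your Hilbert-series fallback is the right remedy in spirit, but as written it is also loose: imposing (1) cuts the $S_r\times S_s$-symmetric polynomials down to $\phi(C^{S_{r+1}\times S_{s+1}})$, not to anything built from $\Lambda_{r+1,s+1}$ (which is a \emph{subring} of $C^{S_{r+1}\times S_{s+1}}$, not the function ring of the hypersurface), and one would then have to verify that (2)--(4) impose exactly enough independent conditions to match $h_{A_{r,s}}(u)$ from Theorem~\ref{CMness1rs}. The paper does not fill this gap either.
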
 

\begin{proof} Let us prove (i). It is easy to check that conditions (1)-(4) (together with the $S_r\times S_s$-symmetry) 
are exactly the restriction of the quasi-invariance conditions of Proposition \ref{quasi-inva}(i) for $\Lambda_{r+1,s+1,q,t}$ 
to the hyperplane $y_{r+1}=z_{s+1}$ (i.e., they define the equivalence relation on points induced by restricting 
the relation of Proposition \ref{quasi-inva}(i) to this hyperplane). This implies the desired statement. 
The proof of (ii) is similar, using an infinitesimal version of this argument 
(as the equations (1)-(4) of (ii) are the infinitesimal versions of equations (1)-(4) of (i)).  
\end{proof} 

\section{The CM property of subspace arrangements of type $(2m,m,\dots,m)$.}

In this section we will use the same method as in the previous section to prove the following result 
about CM-ness of subspace arrangements, in the spirit of \cite{BCES}. 
Namely, for a partition $\lambda$ let $X_\lambda$ be the union of subspaces 
in $\Bbb C^{|\lambda|}$ defined by the condition that some $\lambda_1$ coordinates are the same, some other $\lambda_2$ coordinates 
are the same, etc. 

\begin{theorem}\label{2mmn} The variety $X_{(2m,m^{(r)})}$ is CM for any $r\ge 0$ and $m\ge 1$.   
\end{theorem}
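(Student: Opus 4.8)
The plan is to mimic the strategy of Section 4 exactly, replacing the algebra $\Lambda_{r+1,s+1,a}$ by the coordinate ring of the subspace arrangement $X_{(m^{(n)})}$ of $n = r+2$ groups of $m$ equal coordinates, whose Cohen--Macaulayness is known from \cite{EGL} via the representation theory of rational Cherednik algebras with minimal support. Write $C = \Bbb C[x_1,\dots,x_{nm}]$, let $C' = \Bbb C[X_{(m^{(n)})}]$ be the (CM) ring of functions on the $n$-tuple arrangement, and let $I \subset C'$ be the ideal defining $X_{(2m,m^{(r)})}$ as a closed subscheme of $X_{(m^{(n)})}$ — that is, $I$ is the ideal of functions vanishing on the locus inside $X_{(m^{(n)})}$ where the first two $m$-blocks coincide. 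The key point, just as in Lemma \ref{ideal}, is that $I$ is actually an ideal of the big polynomial ring $C$: I would show that $I = D\cdot C$ for an explicit $S_{nm}$-symmetric polynomial $D$ (a product of squared differences $\prod (x_i - x_j)^2$ over the pairs that must be forced equal, summed over the $S_{nm}$-orbit in the appropriate sense), using that $C'$ has a quasi-invariance/differential description near the relevant diagonals; then $C/I = \Bbb C[X_{(2m,m^{(r)})}]$ is a hypersurface-type quotient and in particular is itself CM.

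Next I would apply Lemma \ref{homalg} with $B := \Bbb C[p_1,\dots,p_{nm}]$ the ring generated by the first $nm$ power sums (or any homogeneous system of parameters of degrees that works). As in Section 4: $C$ is free over $B$ by Serre's theorem; $C'$ is free over $B$ because $C'$ is CM of Krull dimension $nm$ (using \cite{EGL}) and $p_1,\dots,p_{nm}$ restrict to an h.s.o.p. on $X_{(m^{(n)})}$; and $C/I = \Bbb C[X_{(2m,m^{(r)})}]$ is CM, so to conclude it is $B$-free it suffices that it is module-finite over $B$, i.e. that the restrictions of $p_1,\dots,p_{nm}$ to $X_{(2m,m^{(r)})}$ form an h.s.o.p., equivalently that their common zero locus on $X_{(2m,m^{(r)})}$ is the origin. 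The latter is a dimension count: $\dim X_{(2m,m^{(r)})} = nm - m$ when $n \ge 3$ (one block of $2m$ equal coordinates plus $n-2$ blocks of $m$ plus the free coordinates — wait, one should recount), so I need $nm - m$ of the power sums, hence I would instead take $B = \Bbb C[p_1,\dots,p_{\dim}]$ and check that $p_1 = \cdots = p_{\dim} = 0$ has only the trivial solution on each irreducible component of $X_{(2m,m^{(r)})}$; since each component is a coordinate subspace on which the $p_i$ become generalized power sums of distinct-weight type, this is the familiar vanishing statement (cf. \cite{BCES}, and Proposition \ref{fingen1}). Then Lemma \ref{homalg} gives that $C'/I = \Bbb C[X_{(2m,m^{(r)})}]$ is $B$-free, hence CM.

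The main obstacle I expect is the first step: proving that the ideal $I$ of $X_{(2m,m^{(r)})}$ inside $\Bbb C[X_{(m^{(n)})}]$ is principal, generated by an explicit polynomial pulled back from $C$, and identifying that generator correctly. In Lemma \ref{ideal} this worked cleanly because there was a single hyperplane $y_{r+1} = z_{s+1}$ and the quasi-invariance condition forced divisibility by the square of the corresponding linear form; here "gluing two $m$-blocks together inside the $n$-block arrangement" is a codimension-$m$ (not codimension-$1$) condition, so $I$ will not literally be principal in $C$ but rather generated by a suitable $S_{nm}$-symmetrized product of squared Vandermonde-type factors, and one must check both that this product lies in $C'$ and satisfies the differential quasi-invariance defining $C'$, and that conversely every element of $I$ is divisible by it. Once the structure of $I$ (and hence the CM-ness of $C/I$ and its Hilbert series) is pinned down, the homological-algebra and dimension-count steps go through as in Section 4.
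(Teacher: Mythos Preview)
Your high–level plan (reduce to the CM-ness of $X_{(m^{(n)})}$ from \cite{EGL} and then use Lemma~\ref{homalg}) is exactly what the paper does, but your implementation breaks down at the very first step because you have chosen the wrong ambient ring.

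In Section~4 the algebra $C'=\Lambda_{r+1,s+1,a}$ is a \emph{subalgebra} of the polynomial ring $C=\Bbb C[\bold y,\bold z]$; that is what makes Lemma~\ref{homalg} applicable and what allows the kernel $I\subset C'$ to be identified with a principal ideal of $C$. Here, however, $C'=O(X_{(m^{(n)})})$ is a \emph{quotient} of $C=\Bbb C[x_1,\dots,x_{nm}]$, not a subring, so there is no inclusion $C'\subset C$ and no sense in which the defining ideal $I$ of $X_{(2m,m^{(r)})}$ inside $C'$ ``is actually an ideal of the big polynomial ring $C$.'' (Its preimage in $C$ is the ideal of a subvariety of codimension $nm-(n-1)$ in $\Bbb C^{nm}$, certainly not principal.) Relatedly, your dimension counts are off: each irreducible component of $X_{(m^{(n)})}$ is an $n$-plane and each component of $X_{(2m,m^{(r)})}$ is an $(n-1)$-plane, so the inclusion $X_{(2m,m^{(r)})}\subset X_{(m^{(n)})}$ has codimension~$1$, not codimension~$m$; and the base ring $B$ should have $n-1$ generators, not $nm$.

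The paper's key idea, which you are missing, is to take as the big ring the \emph{normalization}
\[
O(\widetilde X_{(m^{(n)})})\;=\;\bigoplus_{\text{components}}\Bbb C[u_1,\dots,u_n],
\]
a direct sum of polynomial rings. Now $C'=O(X_{(m^{(n)})})$ genuinely sits inside this as a subalgebra, and the crucial lemma (replacing your anticipated obstacle) is that $I$ is the \emph{principal} ideal of $O(\widetilde X_{(m^{(n)})})$ generated by the discriminant $\prod_{i<j}(u_i-u_j)$ on each summand: a point of a component lies in $X_{(2m,m^{(r)})}$ iff two of its $m$-blocks agree, i.e.\ the discriminant of the block-values vanishes, and conversely the discriminant on one component already vanishes on every other component, so any multiple of it lies in $I$. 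With this in hand one takes $B$ to be the polynomial ring on $n-1$ generic linear forms; then $I\subset O(X_{(m^{(n)})})\subset O(\widetilde X_{(m^{(n)})})$ is a chain of free $B$-modules, the middle term $O(\widetilde X_{(m^{(n)})})/I$ of the resulting short exact sequence is free (it is the function ring of a disjoint union of hypersurfaces), and Lemma~\ref{homalg} finishes the proof.
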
 

\begin{proof} Let $n=r+2$. Consider the variety $X_{m^{(n)}}$. Recall that $X_{m^{(n)}}$ is CM (\cite{EGL}, Proposition 3.11). 
The algebra $O(X_{m^{(n)}})$ can be viewed as a subalgebra of its normalization $O(\widetilde{X}_{m^{(n)}})$, a direct 
sum of polynomial rings. Let $I_{m^{(n)}}$ be the kernel of the morphism
$O(X_{m^{(n)}})\to O(X_{(2m, m^{(n-2)})})$, which we may again view as a module over $O(\widetilde{X}_{m^{(n)}})$.

\begin{lemma}\label{princid} $I_{m^{(n)}}$ is a principal ideal in $O(\widetilde{X}_{m^{(n)}})$.
\end{lemma} 

\begin{proof}  A point $\bold x=(x_1,x_2,...,x_{mn})$ with $x_1=\cdots=x_m,x_{m+1}=\cdots=x_{2m},\dots$ is in $X_{(2m,m^{(n-2)})}$ iff two of its $m$-blocks are equal, and thus a function in $I_{m^{(n)}}$ must be a multiple of the discriminant on each component in $\widetilde{X}_{m^{(n)}}$.  Conversely, since the discriminant on one component vanishes on all other components, we find that any function on $\widetilde{X}_{m^{(n)}}$ which is a multiple of the discriminant in each summand is actually in $I_{m^{(n)}}$.  It follows that the restriction of $I_{m^{(n)}}$ to each direct summand of $\widetilde{X}_{m^{(n)}}$ is the principal ideal generated by the discriminant, and thus $I_{m^{(n)}}$ is itself a principal ideal.
\end{proof} 

     Now, if we extend a generator of $I_{m^{(n)}}$ by a generic sequence of linear polynomials, the result will be a regular sequence, as it is regular in each direct summand of 
     $O(\widetilde{X}_{m^{(n)}})$. Let $B$ be the polynomial ring generated by the chosen sequence of linear polynomials.  Then (since a generic sequence of linear polynomials is a regular sequence for $X_{m^{(n)}}$, and since the latter is CM) we have a chain of free $B$-modules:
$$
I_{m^{(n)}}\subset O(X_{m^{(n)}})\subset O(\widetilde{X}_{m^{(n)}}),
$$
and thus a short exact sequence of $B$-modules of homological dimension 1:
$$
0\to O(X_{m^{(n)}})/I_{m^{(n)}}\to O(\widetilde{X}_{m^{(n)}})/I_{m^{(n)}}\to O(\widetilde{X}_{m^{(n)}})/O(X_{m^{(n)}})\to 0
$$
The middle term is free of finite rank since the algebra in the middle is CM (the function algebra on a disjoint union of hypersurfaces).
Thus, so is $O(X_{m^{(n)}})/I_{m^{(n)}}=O(X_{(2m,m^{(r)})})$. Hence, $X_{(2m,m^{(r)})}$ is a CM variety, as desired. 
\end{proof} 

On the basis of the results of \cite{BCES} and this paper, as well as computer calculations, we state the following conjecture. 

\begin{conjecture} 
$X_\lambda$ is CM if and only if one of the following holds: 

(1) $\lambda=(m^{(r)},1^{(s)})$ with $r\ge 1$, $m>s\ge 0$; 

(2) $\lambda=(2^{(r)},1^{(s)})$ for $r\ge 1,s\ge 0$; 

(3) $\lambda=(2m,m^{(s)})$, $m\ge 1$. 
\end{conjecture}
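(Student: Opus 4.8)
The statement is an equivalence, and the two directions have very different characters. For the ``if'' direction I would assemble known results. Family~(3), $\lambda=(2m,m^{(s)})$, is exactly Theorem~\ref{2mmn}. Family~(1) with $s=0$, i.e.\ $\lambda=m^{(r)}$, is \cite{EGL}, Proposition~3.11. For the rest of family~(1) (with $1\le s<m$) and for all of family~(2) --- where the point is that the block size $2$ imposes no bound on the number of singletons --- I would argue as in Section~6 and in \cite{EGL,BCES}: present $O(X_\lambda)$ as, or inside, a standard module over the rational Cherednik algebra with the appropriate parameters, check that this module has minimal support exactly when the numerical condition holds, and deduce Cohen--Macaulayness from \cite{EGL}. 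A more hands-on variant, in the spirit of Lemma~\ref{homalg} and Lemma~\ref{princid}, would be to produce a chain $I\subset O(X_\lambda)\subset O(\widetilde{X}_\lambda)$ with $I$ a principal ideal in the normalization and all three terms free over a polynomial subring, and then apply Lemma~\ref{homalg}; this disposes of the case $s=0$ and of the hypersurface case $r=1$ of~(2) immediately, and in general reduces to showing that a suitable discriminant is a nonzerodivisor on each direct summand of $O(\widetilde{X}_\lambda)$.

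The ``only if'' direction --- that $X_\lambda$ is \emph{not} Cohen--Macaulay for every $\lambda$ outside families~(1)--(3) --- is the real substance, and I do not expect it to reduce to finitely many cases. Two easy simplifications are available. Adjoining a block (of any size) or a singleton to a non-admissible partition again gives a non-admissible partition, and I would expect --- this needs proof --- that it also preserves failure of Cohen--Macaulayness, since $X_{(\mu,1)}$ is a union of cylinders over $X_\mu$ and adjoining a block behaves similarly; this lets one assume $\lambda$ has as few parts as possible and no superfluous singletons. Enlarging a block, however, is \emph{not} a legal move --- $(3,2)$ is non-admissible while $(3,3)$ is admissible --- so one is still left with an infinite family: essentially the two-part shapes $(a,b)$ with $a>b\ge2$ and $a\ne2b$, together with the one-big-block-with-light-particles shapes $(m,1^{(s)})$ with $s\ge m\ge3$ (and perhaps a handful of further shapes).

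This infinite family has to be handled uniformly. Here I would compute the depth of $O(X_\lambda)$ from the combinatorial description of the local cohomology of a subspace arrangement in terms of the M\"obius function of its intersection lattice, extract the Hilbert series by inclusion--exclusion, and compare it with the numerator that Cohen--Macaulayness would force; the target is a closed-form statement that $\depth O(X_{(a,b)})<\dim X_{(a,b)}=2$ unless $a=b$ or $a=2b$, and similarly for the light-particle family. For the smallest cases --- $(3,2)$, $(4,3)$, $(3,1^{(3)})$ --- this is a finite, mechanical computation, presumably what the ``computer calculations'' in the statement refer to.

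The main obstacle is precisely this uniform computation over the infinite two-part family (and the light-particle family): the intersection lattice of $X_{(a,b)}$ is combinatorially intricate, it must be analyzed with $a$ and $b$ as parameters, and one must see exactly why $a=2b$ is distinguished --- this being the combinatorial shadow of the ``fat hook'' mechanism behind Theorem~\ref{2mmn} and Theorem~\ref{CMnessrs}. A secondary obstacle is the monotonicity claim above: because Cohen--Macaulayness of unions of linear subspaces is fragile under operations on the intersection lattice, propagating failure of CM-ness under adjoining a block or a singleton is not a formal matter, and would likely require exhibiting a persistent nonzero class in $H^i_{\mathfrak{m}}(O(X_\lambda))$ with $i<\dim X_\lambda$. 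Granted these, the ``if'' direction is comparatively routine, and the equivalence follows.
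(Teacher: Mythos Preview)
The statement you are addressing is a \emph{conjecture}, not a theorem: the paper does not prove it. Immediately after stating it, the paper remarks that the ``if'' direction is known and that only the ``only if'' direction is in question. So there is no proof in the paper against which to compare your argument.

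Your treatment of the ``if'' direction is broadly in line with what the paper asserts: family~(3) is Theorem~\ref{2mmn}, and families~(1) and~(2) are covered by the results of \cite{EGL} and \cite{BCES} (for~(1) with $s\ge1$ this is Theorem~4.4/Proposition~3.11 type input; for~(2) the $X_{(2^{(r)},1^{(s)})}$ case is also established there). Your sketch via Cherednik algebras and minimal support, or alternatively via the Lemma~\ref{homalg}/\ref{princid} mechanism, is a reasonable summary of how those results are obtained.

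For the ``only if'' direction, what you have written is a research outline, not a proof --- and appropriately so, since the problem is open. Two comments on the outline itself. First, your proposed monotonicity reduction (failure of CM persists under adjoining a block or a singleton) is not known and, as you note, is not formal; it would already be a substantial result. Second, even granting it, the residual infinite families you identify (two-part shapes $(a,b)$ with $a>b\ge2$, $a\ne 2b$, and the light-particle shapes $(m,1^{(s)})$ with $s\ge m\ge3$) are genuinely the crux, and the intersection-lattice/local-cohomology approach you sketch has not, to my knowledge, been carried out uniformly in these parameters. The paper's ``computer calculations'' are evidence, not proof. So your proposal correctly locates the difficulty but does not resolve it; that is consistent with the statement's status as a conjecture.
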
 

Note that the ``if'' part of the conjecture is known, and only the ``only if'' part is in question. 

\section{$m$-quasi-invariants}
\label{mquasi}

\subsection{Rational $m$-quasi-invariants} 
Let $m\ge 1, r\ge 2,s\ge 1$ be integers. Following the paper \cite{FV2} (which treats the case $s=1$), define the algebra 
$\Lambda_{r,s}(m)$ to be the algebra of polynomials $P\in \Bbb C[y_1,...,y_r,z_1,...,z_s]$ 
which are symmetric in the $z_l$, satisfy the quasi-invariance conditions \eqref{clquas} for $a=m$,
and also the $m$-quasi-invariance condition:
\begin{equation}\label{clquas1}
P(\dots,y_j,\dots,y_k,\dots,z_1,\dots,z_s)-
P(\dots,y_k,\dots,y_j,\dots,z_1,\dots,z_s)
\end{equation} 
is divisible by $(y_j-y_k)^{2m+1}$
for $1\le j<k\le r$.

\begin{theorem}\label{mquasi-inv1} (M. Feigin)
If $m>s$ then the algebra $\Lambda_{r,s}(m)$ is CM.  
\end{theorem}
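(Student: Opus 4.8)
The plan is to prove Theorem~\ref{mquasi-inv1} by the device used in Sections~4 and~5: realize $\Lambda_{r,s}(m)$ as a quotient $C'/I$, where $C'$ is a Cohen--Macaulay algebra and $I$ is an ideal that is \emph{principal} in a polynomial overring $C$, and then apply Lemma~\ref{homalg} with $B$ a polynomial subalgebra on a homogeneous system of parameters. The Cohen--Macaulayness feeding the machine comes from the representation theory of rational Cherednik algebras of minimal support, as in \cite{EGL}. I would induct on $s$. The base case $s=0$ is $\Lambda_{r,0}(m)=Q_m(S_r)$, the algebra of ordinary $m$-quasi-invariants of $S_r$, which is Cohen--Macaulay (indeed Gorenstein) by Berest--Etingof--Ginzburg \cite{BEG}; here $m\ge 1>0=s$, so the hypothesis holds. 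For the inductive step one wants a larger deformed $m$-quasi-invariant algebra $\widetilde\Lambda$ on one more variable --- Cohen--Macaulay by the inductive hypothesis (together with the minimal-support input), the extra variable being produced either by adding a heavy particle or by ``fattening'' one light particle into a cluster of coincident heavy ones in the spirit of Section~5 --- whose restriction to a suitable hyperplane (an analogue of the epimorphism $\phi_a$ of Section~4) recovers $\Lambda_{r,s}(m)$. One then takes $C$ to be the ambient polynomial ring in the $r+s+1$ variables, $C'=\widetilde\Lambda$, $I=\Ker(\widetilde\Lambda\to\Lambda_{r,s}(m))$, and $B=\Bbb C[P_1,\dots,P_{r+s}]$ for a suitable family of deformed power sums $P_i$.

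Carrying this out requires two lemmas in the spirit of Lemmas~\ref{ideal} and~\ref{princid}. First, that $I$ is a principal ideal of $C$: using the defining conditions of $\widetilde\Lambda$ (relations of the form \eqref{clquas} with $a=m$ together with the $(2m{+}1)$-divisibility \eqref{clquas1}), a function of $\widetilde\Lambda$ that vanishes on the chosen hyperplane must be divisible by an explicit product of linear forms ($y_j-z_l$, and $y_j-y_k$ where appropriate) raised to multiplicities dictated by those conditions, and conversely every such multiple lies in $I$; hence $C/I$ is the coordinate ring of a hypersurface and is Cohen--Macaulay. Second, that $C$, $C'$ and $C/I$ are free $B$-modules: for $C$ this is Serre's theorem (free of infinite rank over $B$, being free of finite rank over a polynomial ring in $r+s+1$ power sums); for $C'=\widetilde\Lambda$ it is the inductive hypothesis; and for $C/I$ it follows once one checks --- a computation of the type in Proposition~\ref{fingen1} --- that the system $P_1=\dots=P_{r+s}=0$ together with the vanishing of the above discriminant has only the zero solution. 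Lemma~\ref{homalg} then yields that $C'/I=\Lambda_{r,s}(m)$ is a free $B$-module of finite rank, hence Cohen--Macaulay, and along the way one also reads off its Hilbert series.

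The main obstacle is the identification underlying the inductive step: pinning down precisely which larger deformed quasi-invariant algebra $\widetilde\Lambda$ restricts to $\Lambda_{r,s}(m)$ and via which hyperplane, and then checking --- by an infinitesimal computation like the one proving Proposition~\ref{quasi-inva1}(ii) --- that the defining conditions of $\widetilde\Lambda$ restrict \emph{exactly} to \eqref{clquas} (with $a=m$) and \eqref{clquas1}. It is here that the hypothesis $m>s$ is forced: it should be exactly the condition under which $I$ is principal with the expected generator and the candidate $P_1,\dots,P_{r+s}$ form an honest homogeneous system of parameters (equivalently, the condition under which the relevant Cherednik module has minimal support and \cite{EGL} applies at the top of the induction). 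When $m\le s$ both the principal-ideal structure and the coincidence of Hilbert series fail, consistently with the loss of the Cohen--Macaulay property recorded in the conjecture following Theorem~\ref{2mmn} and in \cite{FV2}; verifying that $m>s$ is exactly this threshold is the crux of the argument.
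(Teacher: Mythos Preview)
Your plan diverges from the paper's argument and carries a real gap at precisely the step you flag as the ``main obstacle.'' The paper does \emph{not} use Lemma~\ref{homalg} or any induction on $s$. It argues directly: the deformed Calogero--Moser operator $L_2$ preserves both \eqref{clquas} (with $a=m$) and the condition \eqref{clquas1} (a codimension-one check as in \cite{FV1}), so the algebra generated by $\Lambda_{r,s,m}$ and $L_2$ --- which by \cite{F,EGL,BCES} is a quotient of ${\bold e}H_{1/m}(mr+s){\bold e}$ --- acts on $\Lambda_{r,s}(m)$. This exhibits $\Lambda_{r,s}(m)$ itself as a module of minimal support over the spherical Cherednik algebra, whereupon Theorem~1.2 of \cite{EGL} gives freeness over $\Bbb C[Q_1,\dots,Q_{r+s}]$ in one stroke. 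The hypothesis $m>s$ enters only here: with $N=mr+s$ and $0\le s<m$ the stratum of type $(m^{(r)},1^{(s)})$ is the minimal-support locus for parameter $1/m$, which is what makes \cite{EGL} applicable.

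Your inductive scheme, by contrast, never produces $\widetilde\Lambda$, and the mechanism of Lemma~\ref{ideal} genuinely fails in the presence of the high-order condition \eqref{clquas1}. For $I=\Ker(\widetilde\Lambda\to\Lambda_{r,s}(m))$ to be an ideal of the full polynomial ring $C$ (as Lemma~\ref{homalg} requires) one needs $C\cdot I\subset\widetilde\Lambda$; in particular one would need some $D$ with $D\cdot C\subset\widetilde\Lambda$ and $I=D\cdot C$. But $D\cdot g$ satisfies \eqref{clquas1} for \emph{every} $g\in C$ only if $D$ is divisible by $\prod_{j<k}(y_j-y_k)^{2m}$, whereas a typical element of $I$ --- for instance one symmetric in $y_j,y_k$ --- need not vanish on $y_j=y_k$ at all. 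Hence $I\ne D\cdot C$ for any such $D$, $I$ is not an ideal of $C$, and the short exact sequence underpinning Lemma~\ref{homalg} is unavailable. The role you assign to $m>s$ (governing a principal ideal and a system of parameters) is not where the hypothesis actually bites.
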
 

\begin{proof}
Consider the algebra $B$ generated by $\Lambda_{r,s,m}$ and the deformed Calogero-Moser operator $L_2$. As follows from \cite{F,EGL,BCES}, 
this algebra is the quotient of the spherical rational Cherednik algebra ${\bold e}H_{1/m}(mr+s){\bold e}$ by a maximal ideal $I$. 
It is easy to see that $L_2$ preserves the space of polynomials satisfying \eqref{clquas1} (a calculation in codimension $1$ 
similar to the one in \cite{FV1}). Thus, $B$ acts naturally on $\Lambda_{r,s}(m)$. Hence, $\Lambda_{r,s}(m)$ is a module 
over the spherical Cherednik algebra ${\bold e}H_{1/m}(mr+s){\bold e}$ of minimal support. Therefore, by Theorem 1.2 of \cite{EGL}, 
$\Lambda_{r,s}(m)$ is a free module over $\Bbb C[Q_1,...,Q_{r+s}]$, hence it is a CM algebra, as claimed.  
\end{proof} 

Since characters of minimal support modules are explicitly known (see \cite{EGL}), 
the method of proof of Theorem \ref{mquasi-inv1} can be used to derive explicit formulas
for the Hilbert series of $\Lambda_{r,s}(m)$. In the appendix to this paper, M. Feigin 
uses these formulas to prove the conjecture from \cite{FV2} that the algebra $\Lambda_{r,1}(m)$ is Gorenstein.  

\begin{remark} 1. For $s=1$, Theorem \ref{mquasi-inv1} is proved in \cite{FV2}. 

2. Note that for $s=1$, Theorem \ref{classi} (i.e., Theorem 4.4 of \cite{BCES}) follows from Theorem \ref{mquasi-inv1} (proved in this case in \cite{FV2}) by interpolating with respect to $m$ 
(using the fact that the homogeneous components of $\Lambda_{r,s}(m)$ stabilize as $m\to \infty$, and its structure constants depend rationally on $m$).   
\end{remark} 

\subsection{Trigonometric (non-homogeneous) quasi-invariants} 
Let $\Lambda^{\rm trig}_{r,s}(m)$ be the algebra 
of polynomials $P\in \Bbb C[y_1,...,y_r,z_1,...,z_s]$ 
which are symmetric in the $z_l$ and satisfy
the trigonometric (non-homogeneous) $m$-quasi-invariance conditions:
$$
P(\dots,y_j+1,\dots,z_l-m,\dots)=P(\dots,y_j,\dots,z_l,\dots),\ y_j=z_l,
$$
for $1\le j\le r,1\le l\le s$, and 
\begin{equation}\label{clquas2}
P(\dots,y_j,\dots,y_k,\dots,z_1,\dots,z_s)-
P(\dots,y_k,\dots,y_j,\dots,z_1,\dots,z_s)
\end{equation} 
is divisible by $\prod_{p=-m}^m(y_j-y_k-p)$ for $1\le j<k\le r$. 

Note that the algebra $\Lambda^{\rm trig}_{r,s}(m)$ has a natural filtration by degree of 
polynomials. 

\begin{proposition}\label{mquasi-inv2} If $m>s$, 
we have ${\rm gr}(\Lambda^{\rm trig}_{r,s}(m))=\Lambda_{r,s}(m)$. 
In particular, the algebra $\Lambda_{r,s}^{\rm trig}(m)$ is CM.  
\end{proposition}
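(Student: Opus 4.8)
The plan is to prove the two statements in order: first the identification $\gr(\Lambda^{\rm trig}_{r,s}(m))=\Lambda_{r,s}(m)$, and then deduce the Cohen-Macaulay property from Theorem \ref{mquasi-inv1}. For the first statement, I would argue that taking the top-degree part of each defining condition of $\Lambda^{\rm trig}_{r,s}(m)$ yields exactly the corresponding homogeneous condition defining $\Lambda_{r,s}(m)$. Concretely, if $P$ is a trigonometric $m$-quasi-invariant of degree $d$ with leading term $P_0$, then the condition $P(\dots,y_j+1,\dots,z_l-m,\dots)=P(\dots,y_j,\dots,z_l,\dots)$ on $y_j=z_l$, expanded in a Taylor series, has leading term $((\partial_{y_j}-m\partial_{z_l})P_0)(\dots)=0$ on $y_j=z_l$, which is precisely \eqref{clquas} with $a=m$; and the divisibility of the antisymmetrization by $\prod_{p=-m}^m(y_j-y_k-p)$ degenerates, at top degree, to divisibility by $(y_j-y_k)^{2m+1}$. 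Hence $\gr(\Lambda^{\rm trig}_{r,s}(m))\subseteq \Lambda_{r,s}(m)$, giving the coefficientwise domination $h_{\gr\Lambda^{\rm trig}_{r,s}(m)}(u)\le h_{\Lambda_{r,s}(m)}(u)$. Since passing to the associated graded of a filtered algebra preserves Hilbert series, $h_{\Lambda^{\rm trig}_{r,s}(m)}(u)=h_{\gr\Lambda^{\rm trig}_{r,s}(m)}(u)$, so it remains only to prove the reverse domination.

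For the reverse inclusion I would exhibit, for each homogeneous $f\in\Lambda_{r,s}(m)$ of degree $d$, a trigonometric quasi-invariant $\tilde f\in\Lambda^{\rm trig}_{r,s}(m)$ with leading term $f$. The natural candidate is the ``trigonometric lift'' obtained by replacing power sums with the generators $P_{r,s,i,a}$-type building blocks appropriate to this setting — that is, since $\Lambda_{r,s}(m)$ contains the subalgebra $\Lambda_{r,s,m}$ which is the span of the $Q_i=m(y_1^i+\dots+y_r^i)+(z_1^i+\dots+z_s^i)$, one can lift these to the trigonometric (non-homogeneous) generators whose top-degree parts are the $Q_i$, and similarly lift the finitely many additional module generators of $\Lambda_{r,s}(m)$ over $\Bbb C[Q_1,\dots,Q_{r+s}]$ (which exists by Theorem \ref{mquasi-inv1}, $\Lambda_{r,s}(m)$ being a free such module). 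One checks these lifts indeed satisfy the trigonometric conditions — again a codimension-one calculation. Then the $B$-span of the lifted module generators maps onto $\Lambda_{r,s}(m)$ after passing to $\gr$, forcing $h_{\Lambda^{\rm trig}_{r,s}(m)}(u)\ge h_{\Lambda_{r,s}(m)}(u)$, and combined with the previous paragraph we get equality of Hilbert series and $\gr(\Lambda^{\rm trig}_{r,s}(m))=\Lambda_{r,s}(m)$.

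The Cohen-Macaulay conclusion is then immediate: by Theorem \ref{mquasi-inv1}, $\Lambda_{r,s}(m)$ is CM, i.e., a free (finitely generated) module over $\Bbb C[Q_1,\dots,Q_{r+s}]$. Lift $Q_1,\dots,Q_{r+s}$ to elements $\tilde Q_1,\dots,\tilde Q_{r+s}\in\Lambda^{\rm trig}_{r,s}(m)$ with these leading terms; then $\gr$ of the $\Bbb C[\tilde Q_1,\dots,\tilde Q_{r+s}]$-module $\Lambda^{\rm trig}_{r,s}(m)$ is the free module $\Lambda_{r,s}(m)$ over $\Bbb C[Q_1,\dots,Q_{r+s}]$, and freeness of the associated graded of a filtered module over a filtered (polynomial) ring implies freeness of the module itself. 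A finitely generated free module over a polynomial ring is a CM ring, so $\Lambda^{\rm trig}_{r,s}(m)$ is CM.

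I expect the main obstacle to be the construction of the trigonometric lifts and verifying that they genuinely satisfy the non-homogeneous quasi-invariance conditions — in particular checking the divisibility by $\prod_{p=-m}^m(y_j-y_k-p)$ rather than merely its leading term, and checking the shift condition $P(\dots,y_j+1,\dots,z_l-m,\dots)=P(\dots)$ exactly on $y_j=z_l$. The cleanest route is probably to realize $\Lambda^{\rm trig}_{r,s}(m)$ as (a restriction of) a module over the trigonometric/degenerate double affine Hecke algebra or the trigonometric Cherednik algebra, so that the lifts come for free as images of the corresponding abstract generators, paralleling the Cherednik-algebra argument in the proof of Theorem \ref{mquasi-inv1}; but even without that machinery, the codimension-one reduction (as in \cite{FV1}) should handle each condition directly.
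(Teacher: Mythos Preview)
Your inclusion $\gr(\Lambda^{\rm trig}_{r,s}(m))\subseteq \Lambda_{r,s}(m)$ is correct and is the easy half; the paper uses it implicitly as well. The gap is in the reverse direction. You propose to lift each free $\Bbb C[Q_1,\dots,Q_{r+s}]$-module generator $g$ of $\Lambda_{r,s}(m)$ to an element $\tilde g\in\Lambda^{\rm trig}_{r,s}(m)$ with leading term $g$, but you never construct such lifts: the module generators are not explicit, so there is nothing against which to run a ``codimension-one calculation.'' Attempting to correct $g$ by lower-order terms so that the exact shift condition and the divisibility by $\prod_{p=-m}^m(y_j-y_k-p)$ hold is an honest deformation problem, and nothing in your argument excludes obstructions. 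So as written the reverse inequality is unproven.

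The paper closes this gap not by producing lifts but by a support argument, and this is the point where your fallback suggestion (``realize $\Lambda^{\rm trig}_{r,s}(m)$ as a module over the trigonometric Cherednik algebra, so that the lifts come for free'') misidentifies the mechanism. What the paper does is: check that the rational difference (trigonometric) deformed Macdonald--Ruijsenaars operator preserves $\Lambda^{\rm trig}_{r,s}(m)$, so that $\gr(\Lambda^{\rm trig}_{r,s}(m))$ becomes a submodule of $\Lambda_{r,s}(m)$ over the spherical rational Cherednik algebra ${\bold e}H_{1/m}(mr+s){\bold e}$. Since $\Lambda^{\rm trig}_{r,s}(m)$ contains a principal ideal of $\Bbb C[y_1,\dots,y_r,z_1,\dots,z_s]^{S_s}$, the two Hilbert series have the same leading asymptotics at $u\to 1$, so the quotient $\Lambda_{r,s}(m)/\gr(\Lambda^{\rm trig}_{r,s}(m))$ is a Cherednik-algebra module of strictly smaller support. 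But $\Lambda_{r,s}(m)$ already has \emph{minimal} support, so this quotient is zero. No lifts are ever constructed. Your final paragraph (deducing CM from freeness of the associated graded) is fine once the equality $\gr(\Lambda^{\rm trig}_{r,s}(m))=\Lambda_{r,s}(m)$ is in hand.
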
 

\begin{proof} Consider the completion of 
the trigonometric Cherednik algebra 
${\bold e}H_{1/m}^{\rm trig}(mr+s){\bold e}$ near the identity element of the torus $(\Bbb C^\times)^{mr+s}$. 
This algebra has a decreasing filtration with associated graded isomorphic to 
${\bold e}H_{1/m}(mr+s){\bold e}$ (in fact, this deformation is known to be trivial). 
One can check that the action of the algebra ${\bold e}H_{1/m}(mr+s){\bold e}$
on $\Lambda_{r,s}(m)$ deforms to an action of ${\bold e}H_{1/m}^{\rm trig}(mr+s){\bold e}$  
on $\Lambda_{r,s}^{\rm trig}(m)$. Indeed, this amounts to checking that the rational deformed Macdonald-Ruijsenaars operator, 
i.e., the rational difference degeneration of 
the deformed Macdonald-Ruijsenaars operator (1) of \cite{SV2} preserves 
the non-homogeneous $m$-quasi-invariance conditions, which is done by a straightforward computation similar to the one in \cite{SV2}. 
Since the algebra $\Lambda_{r,s}^{\rm trig}(m)$ contains a principal ideal in $\Bbb C[y_1,...,y_r,z_1,...,z_s]^{S_s}$, 
the Hilbert series of the algebras ${\rm gr}(\Lambda_{r,s}^{\rm trig}(m))$ and $\Lambda_{r,s}(m)$ have the same asymptotics as $u\to 1$, i.e., 
give the same value at $1$ after multiplication by $(1-u)^{r+s}$ (namely, $1/s!$).  
Since $\Lambda_{r,s}(m)$ is a minimal support module over ${\bold e}H_{1/m}(mr+s){\bold e}$, this implies that 
we must have ${\rm gr}(\Lambda^{\rm trig}_{r,s}(m))=\Lambda_{r,s}(m)$ (as, because of equal growth, the quotient $\Lambda_{r,s}(m)/{\rm gr}(\Lambda^{\rm trig}_{r,s}(m))$ is a module over the rational Cherednik algebra with smaller support). 
\end{proof} 

\begin{remark} Let $R\subset \h$ be a root system with Weyl group $W$. For $\alpha\in R$ let $s_\alpha$ be the corresponding reflection. 
Let $m$ a multiplicity function on roots (see \cite{FV1}).
In this case we can define the ring of quasi-invariants $\bold Q_m\subset \Bbb C[\h]$, i.e. polynomials $f$ on the reflection representation $\h$ 
such that $f(x)-f(s_\alpha x)$ is divisible by $\alpha(x)^{2m_\alpha+1}$ for $\alpha\in R$, and the ring of trigonometric (non-homogeneous) quasi-invariants 
$\bold Q_m^{\rm trig}$, i.e. polynomials $f$ on $\h$ such that $f(x+\frac{1}{2}j\alpha^\vee)=f(x-\frac{1}{2}j\alpha^\vee)$ if $\alpha(x)=0$ for $j=1,...,m_\alpha$. 
Then one can use the same argument as in the proof of Proposition \ref{mquasi-inv2} (namely, the rational difference degeneration of \cite{Cha}, Proposition 2.1) to 
prove the following proposition:  

\begin{proposition}\label{graded} One has ${\rm gr}(\bold Q_m^{\rm trig})= \bold Q_m$. 
\end{proposition}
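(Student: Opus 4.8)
The plan is to mimic the proof of Proposition \ref{mquasi-inv2}, replacing the type $A$ deformed Cherednik algebra by the Cherednik algebra attached to the root system $R$. First I would recall the relevant representation-theoretic input: the ring of quasi-invariants $\bold Q_m$ is a module over the spherical rational Cherednik algebra $\bold e H_m(W)\bold e$ (this is the statement that $\bold Q_m$ carries an action of the Calogero--Moser operator and of the commutative algebra of quantum integrals; see \cite{FV1}, \cite{EG}, \cite{BEG}), and in fact it is a module of minimal support, so by \cite{EGL} it is a free module over $\Bbb C[\h]^W$, hence CM. Its Hilbert series is therefore known explicitly. The analogous trigonometric object $\bold Q_m^{\rm trig}$ should likewise carry an action of the spherical trigonometric Cherednik algebra $\bold e H_m^{\rm trig}(W)\bold e$: the point is that the rational difference degeneration of the Macdonald--Ruijsenaars operator attached to $R$ (this is the content of \cite{Cha}, Proposition 2.1, cited in the remark) preserves the non-homogeneous quasi-invariance conditions $f(x+\tfrac12 j\alpha^\vee)=f(x-\tfrac12 j\alpha^\vee)$ on $\alpha(x)=0$. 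This preservation is the only genuinely new computation, and it is a codimension-one check along each reflection hyperplane, entirely parallel to the type $A$ case of \cite{SV2}.

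Next I would set up the filtration. The algebra $\bold Q_m^{\rm trig}$ is filtered by degree, and its associated graded sits inside $\bold Q_m$: indeed, the top-degree part of the condition $f(x\pm\tfrac12 j\alpha^\vee)$ being equal on $\alpha(x)=0$ for $j=1,\dots,m_\alpha$ is exactly that the skew part of $f$ with respect to $s_\alpha$ is divisible by $\alpha(x)^{2m_\alpha+1}$, so $\gr(\bold Q_m^{\rm trig})\subseteq \bold Q_m$. Similarly, the completion of $\bold e H_m^{\rm trig}(W)\bold e$ near the identity of the torus has associated graded $\bold e H_m(W)\bold e$ (a trivial deformation, as in \cite{SV2}), and the $\bold e H_m^{\rm trig}(W)\bold e$-action on $\bold Q_m^{\rm trig}$ degenerates to the $\bold e H_m(W)\bold e$-action on $\gr(\bold Q_m^{\rm trig})$. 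Thus $\gr(\bold Q_m^{\rm trig})$ is a submodule of $\bold Q_m$ over $\bold e H_m(W)\bold e$.

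To conclude equality I would compare growth rates, exactly as in Proposition \ref{mquasi-inv2}. Both $\bold Q_m^{\rm trig}$ and $\bold Q_m$ contain a principal ideal of $\Bbb C[\h]$ (generated by $\prod_{\alpha\in R_+}\alpha(x)^{2m_\alpha}$, up to the non-homogeneous analogue), so their Hilbert series have the same leading asymptotics as $u\to 1$; concretely, $(1-u)^{\dim\h}$ times the Hilbert series tends to the same constant ($1/|W|$ times the appropriate factor) for $\gr(\bold Q_m^{\rm trig})$ and for $\bold Q_m$. Since $\bold Q_m$ is a minimal-support module over $\bold e H_m(W)\bold e$, any proper submodule would have strictly smaller support and hence strictly smaller growth; equal growth forces $\gr(\bold Q_m^{\rm trig})=\bold Q_m$.

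The main obstacle is the first step: verifying that the rational-difference degeneration of Chalykh's operator genuinely preserves the non-homogeneous quasi-invariance conditions for an arbitrary root system (and arbitrary multiplicity function). For type $A$ this is \cite{SV2}; in general it requires knowing that \cite{Cha}, Proposition 2.1 provides the right difference operator and then carrying out the codimension-one analysis on each hyperplane $\alpha(x)=0$, where the relevant computation reduces to the rank-one (or rank-two, for non-simply-laced configurations) situation. Everything after that is formal, following the template of Proposition \ref{mquasi-inv2} verbatim.
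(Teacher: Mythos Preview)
Your outline is exactly the paper's own argument: the paper merely says to repeat the proof of Proposition \ref{mquasi-inv2}, replacing the type $A$ deformed Macdonald--Ruijsenaars operator by the rational difference degeneration of \cite{Cha}, Proposition 2.1, and you have unpacked precisely that. One small correction: for a genuine root system $\bold Q_m$ has \emph{full} support (not minimal), and its freeness over invariants comes from \cite{EG,BEG} rather than \cite{EGL}; the final ``equal growth forces equality'' step therefore rests not on minimal support but on the decomposition of $\bold Q_m$ into standard modules established in \cite{BEG} (for integer multiplicities these are simple, so a submodule with the same leading growth must be all of $\bold Q_m$).
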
 

In particular, this implies that $\bold Q_m^{\rm trig}$ is CM and, moreover, Gorenstein
(as by \cite{EG, BEG}, so is $\bold Q_m$).  

\begin{example} For the root system of type $A_1$ the rational Macdonald-Ruijsenaars operator has the form 
$$
(Mf)(x)=\frac{x-m}{x}(T-1)+\frac{x+m}{x}(T^{-1}-1),
$$
where $(Tf)(x)=f(x+1)$. It is easy to see that this operator preserves the space $\bold Q_m^{\rm trig}$ of polynomials $f$ 
such that $f(j)=f(-j)$ for $j=1,2,...,m$. The (completed) trigonometric Cherednik algebra acting on $\bold Q_m^{\rm trig}$ is generated 
by $M$ and $x^2$. Note that $M$ lives in filtration degrees $d\le -2$, and the degree $-2$ (leading) part of $M$ equals $\partial^2-\frac{2m}{x}\partial$, 
the rational Calogero-Moser operator for $A_1$. 
\end{example}  
\end{remark}

\begin{remark} Another proof of Proposition \ref{graded} can be obtained by using the rational difference degeneration $G_m^{\rm trig}:\Bbb C[\h]\to \Bbb C[\h]$ 
of Cherednik's shift operator (\cite{Ch, Cha}). More precisely, Corollary 8.28 of \cite{EG} proves that the image of the usual (differential) 
shift operator $G_m: \Bbb C[\h]\to \Bbb C[\h]$ is exactly $\bold Q_m$. Also, one can check that the image of $G_m^{\rm trig}$ is contained in 
$\bold Q_m^{\rm trig}$, which implies that ${\bold Q}_m^{\rm trig}$ is not smaller (i.e., the same size) as $\bold Q_m$, as desired.     
\end{remark} 
                     
                      \section{Appendix.          The Hilbert series of $\Lambda_{r,s}(m)$ }                             
    
    \begin{center}
    Misha Feigin
    \end{center}

    \vspace{5mm}
    
    {\it To Aleksandr Petrovich Veselov on the 60th birthday, with gratitude}
                                          
     \vspace{5mm}

In this Appendix we find Hilbert series of the algebra $\Lambda_{r,s}(m)$ introduced in Section~\ref{mquasi} assuming throughout that $m>s$. We also show that the algebra is Gorenstein if $s=1$. The algebra $\Lambda_{r,1}(m)$ is isomorphic to the algebra of quasi-invariants for the configuration ${\mathcal{A}_{r}(m)}$ considered in \cite{FV2}, \cite{CFV98}. The Gorenstein property of $\Lambda_{r,1}(m)$ was shown in \cite{FV2} for $r=2$ and it was conjectured to hold for any $r$.

Let $n=mr+s$. Let $\lambda$ be a partition of $n$. 
Let $L_c(\lambda)$ be the corresponding irreducible module for the rational Cherednik algebra $H_c(S_n)$.  Let $e L_c(\lambda)$ be the corresponding irreducible module for the spherical subalgebra, $e=\frac{1}{n!}\sum_{w \in S_n} w$. For a partition $\tau$ of $r$ and a partition $\nu$ of $s$ we denote by $m \tau +\nu$ the partition of $n$ with terms $m \tau_i + \nu_i$. We will also denoted by $\tau$ the corresponding representation of $S_r$.



\begin{theorem}\label{quasi-decompose}
There is an isomorphism
$$
\Lambda_{r,s}(m) \cong \bigoplus_{\tau \vdash r}  \tau\otimes eL_{1/m}(m \tau+s)
$$
of ${\mathbb C}S_r \otimes eH_{1/m}(S_n)e$ modules. 
\end{theorem}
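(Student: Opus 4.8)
The plan is to identify $\Lambda_{r,s}(m)$ as a module over $eH_{1/m}(S_n)e$ and then decompose it using the known structure of minimal support modules. First I would recall from the proof of Theorem \ref{mquasi-inv1} that $\Lambda_{r,s}(m)$ carries an action of the spherical Cherednik algebra $eH_{1/m}(mr+s)e$ and is a module of minimal support, hence is a direct sum of copies of the irreducibles $eL_{1/m}(\mu)$ whose support is minimal, i.e.\ whose associated subvariety is the $S_n$-orbit of $\{y_1=\dots=y_m, y_{m+1}=\dots=y_{2m},\dots\}$ with $s$ remaining free coordinates. By the classification of minimal support modules in \cite{EGL}, these are exactly the $eL_{1/m}(\mu)$ with $\mu = m\tau + \nu$ for $\tau \vdash r$ and $\nu \vdash s$; but since $m > s$ and our quasi-invariance is only imposed among the $y_j$ while the $z_l$ are merely required symmetric, the relevant $\nu$ is forced to be the one-row partition $\nu = (s)$, giving $\mu = m\tau + s$.

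Next I would pin down the multiplicity of each $eL_{1/m}(m\tau + s)$. The natural way is to use the $S_r$-action: the ambient space $\Bbb C[y_1,\dots,y_r,z_1,\dots,z_s]^{S_s}$ carries commuting actions of $S_r$ (permuting the $y_j$) and of the spherical Cherednik algebra, and the quasi-invariance conditions \eqref{clquas} and \eqref{clquas1} defining $\Lambda_{r,s}(m)$ are $S_r$-stable, so $\Lambda_{r,s}(m)$ is a bimodule over $\Bbb C S_r \otimes eH_{1/m}(S_n)e$. The claim is then that the $\tau$-isotypic component, as an $eH_{1/m}(S_n)e$-module, is $eL_{1/m}(m\tau+s)$ with multiplicity $\dim\tau$, i.e.\ $\Lambda_{r,s}(m) \cong \bigoplus_\tau \tau \otimes eL_{1/m}(m\tau+s)$. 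To establish this I would compare characters: on one side the bigraded character of $\Lambda_{r,s}(m)$ (tracking both the polynomial degree and the $S_r$-representation) can be computed directly from the quasi-invariance description, or transported from the known Hilbert series computed via \cite{EGL}; on the other side the characters of the $eL_{1/m}(m\tau+s)$ are explicitly known from \cite{EGL}. Matching the $\dim\tau$-weighted sum of minimal-support characters against the computed bigraded character of $\Lambda_{r,s}(m)$ forces the stated decomposition, since the $eL_{1/m}(m\tau+s)$ for distinct $\tau$ are non-isomorphic (their lowest weights differ) and hence linearly independent in the Grothendieck group.

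The cleanest route to the multiplicities avoids brute-force character juggling: I would instead use the general principle (as in the theory of parabolic induction / Gan--Ginzburg-type functors, or the Bezrukavnikov--Etingof restriction functors) that the minimal-support part of $\Bbb C[y_1,\dots,y_r,z_1,\dots,z_s]^{S_s}$, as a module over $eH_{1/m}(S_n)e$, is built by "inducing up" the polynomial representation $\Bbb C[y_1,\dots,y_r]$ of the rational Cherednik algebra $H_{1/m}(S_r)$ of the smaller symmetric group (the one governing the $m$-blocks) — here the extra $s$ coordinates and the condition $m>s$ guarantee no collision between $y$-blocks and the $z$-block. Under this identification, $\Bbb C[y_1,\dots,y_r]$ decomposes as $\bigoplus_{\tau\vdash r}\tau\otimes(\tau\text{-isotypic piece})$, and the $\tau$-isotypic piece of the polynomial representation is precisely $eL_{1/m}(\tau)$-related data that induces to $eL_{1/m}(m\tau+s)$; the $S_r$-multiplicity $\dim\tau$ is then simply the multiplicity of $\tau$ in $\Bbb C[y_1,\dots,y_r]$ accounting for the $S_r$-covariant structure. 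I expect the main obstacle to be making the bridge between "$\Lambda_{r,s}(m)$ as an explicitly-defined quasi-invariant ring" and "$\Lambda_{r,s}(m)$ as an induced/minimal-support module" completely rigorous — specifically, showing that the quasi-invariance conditions cut out exactly the minimal-support isotypic part and nothing more, which is where the hypothesis $m>s$ is essential (it is exactly what prevents a $z$-coordinate from coinciding with a full $y$-block and thereby excludes larger-support constituents). Once that identification is in hand, the $S_r$-equivariant decomposition of the polynomial representation of $H_{1/m}(S_r)$ and the known behavior of induction on minimal-support modules from \cite{EGL} deliver the isomorphism.
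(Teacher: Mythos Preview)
Your outline begins correctly: $\Lambda_{r,s}(m)$ is a minimal-support $eH_{1/m}(S_n)e$-module, so it decomposes as $\bigoplus_{\tau,\nu} d_{\tau,\nu}\otimes eL_{1/m}(m\tau+\nu)$ with $\tau\vdash r$, $\nu\vdash s$, and the $S_r$-action makes $d_{\tau,\nu}$ an $S_r$-module. But two steps are not actually carried out.

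First, the assertion that only $\nu=(s)$ appears is given as intuition (``the $z_l$ are merely required symmetric''), not as an argument. Nothing you wrote rules out other $\nu\vdash s$ contributing. Second, neither of your two multiplicity routes is executed. The character-comparison route is circular as stated: the paper \emph{derives} the Hilbert series of $\Lambda_{r,s}(m)$ from this very theorem (see Theorem~\ref{thhilbser}), and you offer no independent computation of the $S_r$-equivariant Hilbert series from the quasi-invariance conditions. The induction/restriction route is in the right spirit but you have not identified the concrete bridge between the quasi-invariant ring and the representation-theoretic induction.

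The paper closes both gaps with one localization. Invert $\widehat\alpha=\prod_{i,j}(y_i-z_j)^m$. After this, the $y$--$z$ conditions \eqref{clquas} become redundant (for any $p$ satisfying \eqref{clquas1}, $\widehat\alpha^{\,2} p\in\Lambda_{r,s}(m)$), so $\Lambda_{r,s}(m)_{\rm loc}$ coincides with the localization of the simpler ring $\Lambda'_{r,s}(m)$ of $S_s$-symmetric polynomials satisfying only \eqref{clquas1}. By \cite{BEG} one has, as a $\Bbb CS_r\otimes e'H_{m,1/m}(S_r\times S_s)e'$-module,
\[
\Lambda'_{r,s}(m)\;\cong\;\bigoplus_{\tau\vdash r}\tau\otimes e_rL_m(\tau)\otimes e_sL_{1/m}(\mathrm{triv}).
\]
The missing link is then Wilcox \cite{W}: localization gives $eL_{1/m}(m\tau+\nu)_{\rm loc}\cong\bigl(e_rL_m(\tau)\otimes e_sL_{1/m}(\nu)\bigr)_{\rm loc}$, and these are pairwise non-isomorphic for distinct $(\tau,\nu)$. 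Since localization at an $S_r$-invariant element is exact and kills no minimal-support constituent, comparing the two decompositions forces $d_{\tau,\nu}=0$ for $\nu\neq(s)$ and $d_{\tau,(s)}\cong\tau$. Your restriction-functor idea is essentially this, but the proof requires the auxiliary ring $\Lambda'_{r,s}(m)$, its decomposition via \cite{BEG}, and the Wilcox identification --- none of which your sketch supplies.
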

\begin{proof}

It follows from the proof of Theorem \ref{mquasi-inv1}  that $\Lambda_{r,s}(m)$ is a module over $eH_{1/m}(S_n)e$. It follows from \cite{EGL} that as a module over $ {\mathbb C}S_r \otimes eH_{1/m}(S_n)e$  it can be decomposed as
\begin{equation}
\label{dec}
\Lambda_{r,s}(m) \cong  \bigoplus_{\nad{\tau \vdash r}{\nu \vdash s}} d_{\tau, \nu} \otimes eL_{1/m}(m \tau+\nu)
\end{equation}
for some ${\mathbb C}S_r$ modules $d_{\tau, \nu}$. Let us consider the localised module $\Lambda_{r,s}(m)_{\rm loc}$, where  localisation is at the powers of
$$
\alpha(x) = \sum_{w \in S_n} w \left( \prod_{\nad{1\le i \le mr}{mr+1\le j \le n}} (x_i -x_j)\right).
$$
It  is a module over the localised rational Cherednik algebra  $eH_{1/m}(S_n, U)e$, where $U \subset {\mathbb C}^n$ is given by
 $\alpha (x) \ne 0$. Equivalently, we localise quasi-invariants $\Lambda_{r,s}(m) \subset {\mathbb C}[y_1,\ldots,y_r, z_{1}, \ldots, z_s]$ with respect to the powers of 
$$
\widehat\alpha (y,z) =\prod_{\nad{1\le i \le r}{1\le j \le s}} (y_i-  z_j)^m.
$$

 Let $\Lambda'_{r,s}(m) \subset {\mathbb C}[y_1,\ldots, y_r, z_1,\ldots, z_{s}]$ consist of polynomials $p$ which are symmetric in $z$-variables and satisfy quasi-invariant conditions (\ref{clquas1}). It is a module over the spherical rational Cherednik algebra $e' H_{m, 1/m}(S_r \times S_s; {\mathbb C}^{r+s}) e'$, $e'=\frac{1}{r! s!} \sum_{w\in S_r \times S_s} w$. 
It follows from \cite{BEG} that as ${\mathbb C}S_r \otimes e' H_{m, 1/m}(S_r \times S_s; {\mathbb C}^{r+s}) e'$ module it decomposes as
 $$
 \Lambda'_{r,s}(m)=  \bigoplus_{\tau \vdash r}  \tau\otimes e_r L_{m}(\tau) \otimes e_s L_{1/m}(triv),
 $$
 where $e_r=\frac{1}{r!} \sum_{w\in S_r} w$, $e_s=\frac{1}{s!} \sum_{w\in S_s} w$.

  Consider localisation $\Lambda'_{r,s}(m)_{\rm loc}$ of the module $\Lambda'_{r,s}(m)$ at the powers of $\alpha'(y,z)=  \prod_{\nad{1\le i \le r}{1\le j \le s}} (y_i -z_j)$, which  is a module over the localised spherical  Cherednik algebra $e' H_{m,1/m}(S_r \times S_s, U') e'$, where  $U' \subset {\mathbb C}^{r+s}$ is given by $\alpha'|_{U'} \ne 0$.
 It is clear that $\Lambda_{r,s}(m)_{\rm loc} \subseteq \Lambda'_{r,s}(m)_{\rm loc}$. Since for any $p \in \Lambda'_{r,s}(m)$ we have ${\a'}^t p \in \Lambda_{r,s}(m)$ for any $t\ge 2$ the opposite inclusion follows so these spaces are equal. 

It follows from the work \cite{W} that there is an isomorphism 
\begin{equation}
\label{dec2}
eL_{1/m}(m \tau+\nu)_{\rm loc} \cong (e_r L_{m}(\tau) \otimes e_s L_{1/m}(\nu))_{\rm loc},
\end{equation}
of $e' H_{m,1/m}(S_r \times S_s, U') e'$ modules, and that these modules are not isomorphic for different $(\tau, \nu)$. Since we localise at $S_r$-invariant elements $\widehat \alpha, \alpha'$ the decompositions \eqref{dec}, \eqref{dec2} imply that $d_{\tau, \nu}=0$ if $\nu$ has more than one part, and that $d_{\tau, s}\cong \tau$.
\end{proof}





Let $s_\lambda$ be the Schur function corresponding to the partition $\lambda$. Define the coefficients $c_{\lambda; m}^\nu, b_{\lambda; m}^\nu$ by
\begin{equation}
s_\lambda(x_1^m, x_2^m, \ldots) = \sum_\nu c_{\lambda; m}^\nu s_{\nu}(x_1, x_2, \ldots),
\end{equation}
\begin{equation}
s_\lambda(x_1^m, x_2^m, \ldots) s_s (x_1, x_2, \ldots) = \sum_\nu b_{\lambda, s; m}^\nu s_{\nu} (x_1, x_2, \ldots).
\end{equation}
Let $\lambda$ be a partition of $r$. Define $\kappa(\lambda)=\sum_{1\le i < j \le r} s_{ij}|_\lambda$ the content of $\lambda$.
Let $p_k(\lambda)$ be the multiplicity of the  representation $\lambda$ in the space of homogeneous polynomials of $r$ variables of degree $k$. Define the Hilbert series
$$
\chi_\lambda(t)= \sum_{k=0}^\infty p_k(\lambda).
$$
It is known from \cite{K} that
\begin{equation}
\label{chi}
\chi_\lambda(t)= \prod_{\square \in \lambda} \frac{t^{l(\square)}}{1-t^{h(\square)}},
\end{equation}
where $l(\square)$ is the leg length of a box, and $h(\square)$ is the hook length of a box.

Let $\Lambda^{(k)}_{r,s}(m)\subset \Lambda_{r,s}(m)$ be the subspace of homogeneous elements of degree $k$. Let
$$
P_{r,s;m}(t)= \sum_{k=0}^\infty \dim \Lambda^{(k)}_{r,s}(m) t^k
$$
be the Hilbert series of $\Lambda_{r,s}(m)$.

\begin{theorem}\label{thhilbser}
The Hilbert series of the algebra  $\Lambda_{r,s}(m)$ has the form
$$
P_{r,s;m}(t)= \sum_{\lambda \vdash r} \dim \lambda \sum_{\nu \vdash n} b^{\nu}_{\lambda, s; m} t^{\frac{ n (n-1) - 2 \kappa(\nu)}{2m}} \chi_\nu(t).
$$
\end{theorem}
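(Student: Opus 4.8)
The plan is to compute the Hilbert series of $\Lambda_{r,s}(m)$ directly from the decomposition in Theorem~\ref{quasi-decompose}, by taking graded dimensions of both sides. Since $\Lambda_{r,s}(m) \cong \bigoplus_{\tau \vdash r} \tau \otimes eL_{1/m}(m\tau+s)$ as a graded vector space (the grading being the polynomial degree, which corresponds to the internal grading on the spherical module $eL_{1/m}(m\tau+s)$ shifted so that its lowest degree is $0$), one has
$$
P_{r,s;m}(t) = \sum_{\tau \vdash r} (\dim \tau)\, \mathrm{Hilb}\bigl(eL_{1/m}(m\tau+s), t\bigr).
$$
So the first step is to recall the known character formula for the spherical part of an irreducible lowest-weight module over the rational Cherednik algebra of $S_n$, in the ``minimal support'' regime $c = 1/m$ with $m > s$, where $L_{1/m}(m\tau+s)$ has minimal support; by \cite{EGL} (and the general theory of such modules) the $S_n$-isotypic character of $L_{1/m}(m\tau+s)$ is explicitly known, and taking $S_n$-invariants (i.e.\ applying $e$) extracts the multiplicity of the trivial representation in each graded piece.

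Next I would translate that character into the combinatorial language of the statement. The module $eL_{1/m}(m\tau+s)$, restricted appropriately, should have its Hilbert series expressed via the plethystic expansion: the relevant generating function is built from $s_\tau(x_1^m, x_2^m, \ldots)\, s_s(x_1, x_2, \ldots) = \sum_\nu b^\nu_{\tau,s;m}\, s_\nu(x_1,x_2,\ldots)$, which is why the coefficients $b^\nu_{\lambda,s;m}$ appear. Concretely, the trivial-isotypic graded multiplicity of $L_{1/m}(m\tau+s)$ in degree $k$ (above its minimal degree) is governed by the branching/plethysm that produces, for each $\nu \vdash n$ arising in the expansion, a contribution $b^\nu_{\tau,s;m}\,\chi_\nu(t)$, where $\chi_\nu(t)$ is the graded multiplicity of the $S_r$-representation $\nu$ in $\Bbb C[x_1,\ldots,x_r]$ — given by the hook-content formula \eqref{chi} from \cite{K}. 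The overall degree shift is the difference between the $h$-eigenvalue (scaling weight) of the lowest-weight vector of $L_{1/m}(m\tau+s)$ and that of the polynomial representation; this shift evaluates to $\frac{n(n-1)-2\kappa(\nu)}{2m}$ once one records that the scaling element acts on $L_c(\mu)$ with lowest eigenvalue a linear function of the content of $\mu$, and that the content of $m\tau+s$ contributes the $\kappa(\nu)$ term after the plethystic substitution $x_i \mapsto x_i^m$. Summing over $\tau \vdash r$ with weights $\dim\tau = \dim\lambda$ (renaming $\tau$ to $\lambda$) yields exactly the claimed formula.

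The main obstacle, and the step requiring genuine care rather than routine bookkeeping, is pinning down the precise normalization of the grading and the degree shift so that the exponent of $t$ comes out as $\frac{n(n-1)-2\kappa(\nu)}{2m}$. This requires: (a) correctly identifying the lowest degree of $eL_{1/m}(m\tau+s)$ inside $\Bbb C[y_1,\ldots,y_r,z_1,\ldots,z_s]$ (equivalently, the degree of the principal ideal generator $\widehat\alpha$-type element whose multiples generate the module relative to the symmetric subalgebra), and (b) matching the $h$-grading on the abstract Cherednik module with the polynomial grading under the isomorphism of Theorem~\ref{quasi-decompose}. One must also verify that the plethystic identity defining $b^\nu_{\lambda,s;m}$ is the one dictated by the restriction of the $S_n$-module $L_{1/m}(m\tau+s)$ to the Young subgroup $S_{mr}\times S_s$ followed by the isotypic decomposition under $S_r \wr S_m \times S_s$, which is where the substitution $x_i\mapsto x_i^m$ enters; this is a standard but delicate symmetric-function manipulation. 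Once the normalization is fixed, the remaining computation is a direct assembly of the hook-content formula \eqref{chi} and the known minimal-support character, so I expect no further difficulty.
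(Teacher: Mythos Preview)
Your approach is essentially the same as the paper's: start from Theorem~\ref{quasi-decompose}, so that $P_{r,s;m}(t)=\sum_{\lambda\vdash r}(\dim\lambda)\,\mathrm{Hilb}\bigl(eL_{1/m}(m\lambda+s),t\bigr)$, and then compute the right-hand side via the character theory of \cite{EGL}. The paper's execution is slightly more direct than what you describe: rather than invoking branching to a Young subgroup or a wreath product, it uses the Grothendieck-group identity $[L_{1/m}(m\lambda+s)]=\sum_{\nu\vdash n} b^{\nu}_{\lambda,s;m}\,[M_{1/m}(\nu)]$ from \cite{EGL} directly, applies $e$, and then reads off $\Tr_{eM_{1/m}(\nu)}(t^h)=t^{n/2-\kappa(\nu)/m}\chi_\nu(t)$. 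The degree-shift normalization you flag as the delicate step is handled cleanly by computing the scaling element $h=\tfrac12\sum(x_i\nabla_i+\nabla_i x_i)$ on $S_n$-invariants, where it reduces to the Euler operator plus the constant $\tfrac{n}{2}-\tfrac{n(n-1)}{2m}$; subtracting this constant from $\tfrac{n}{2}-\tfrac{\kappa(\nu)}{m}$ gives exactly the exponent $\tfrac{n(n-1)-2\kappa(\nu)}{2m}$.

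One slip to fix: you write that $\chi_\nu(t)$ is the graded multiplicity of ``the $S_r$-representation $\nu$ in $\Bbb C[x_1,\ldots,x_r]$'', but $\nu\vdash n$, so this is the $S_n$-representation $\nu$ in $\Bbb C[x_1,\ldots,x_n]$ (equivalently, the Hilbert series of $eM_{1/m}(\nu)$ up to the $t^h$-shift). Also, the exponent involves $\kappa(\nu)$ for each $\nu$ appearing in the virtual decomposition, not the content of $m\lambda+s$ itself; your phrasing there is a bit muddled.
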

\begin{proof}
It is shown in \cite{EGL} that in the Grothendieck group
$$
[L_{1/m}(m\lambda+s)]= \sum_{\nu \vdash n} b_{\lambda, s; m}^\nu [M_{1/m}(\nu)].
$$
Therefore
$$
[eL_{1/m}(m\lambda+s)]= \sum_{\nu \vdash n} b_{\lambda, s; m}^\nu [e M_{1/m}(\nu)],
$$
and hence (cf. \cite{EGL})
$$
Tr_{e L_{1/m} (m \lambda+s)} (t^h) = \sum_{\nu \vdash n} b_{\lambda, s; m}^\nu t^{\frac{n}{2}-\frac{\kappa(\nu)}{m}} \chi_\nu(t),
$$
where $h=\frac12 \sum_{i=1}^n (x_i \nabla_i + \nabla_i x_i)$ is the scaling element of the rational Cherednik algebra.

On the other hand the action of the operator $h$ in the polynomial representation ${\mathbb C}[x_1,\ldots,x_n]$ is given by 
$$
h=\sum_{i=1}^n x_i \p_{x_i} +\frac{n}{2}-\frac{1}{m}\sum_{i<j}^n s_{ij},
$$
which reduces to 
$
h^{res}=\sum_{i=1}^n x_i \p_{x_i} +\frac{n}{2}-\frac{n(n-1)}{2m}
$ 
on $S_n$-invariants. Its action in the representation 
$\Lambda_{r,s}(m)\subset {\mathbb C}[z_1,\ldots, z_{r}, y_1, \ldots, y_s]$  is by the same differential operator $h^{res}$ where the Euler field component 
$\sum_{i=1}^n x_i \p_{x_i} $ acts as its restriction  to the Euler operator in $y,z$-plane which is $\sum_{i=1}^r y_i \p_{y_i} + \sum_{i=1}^s z_i \p_{z_i}$. 
The statement now follows from Theorem~\ref{quasi-decompose}.
\end{proof}

Let us now consider the case $s=1$ so $n=mr+1$. We will derive another formula for the Hilbert series of the quasi-invariants $\Lambda_{r}(m):=\Lambda_{r,1}(m)$. It is based on the following results from \cite{EGL}. Let $\lambda$ be a partition of $r$. For the representations of rational Cherednik algebra  $H_{1/m}(S_{mr})$ one has
\begin{equation}
[L_{1/m}(m \lambda)] = \sum_{\nu \vdash mr} c_{\lambda; m}^\nu [M_{1/m}(\nu)].
\end{equation}
Then it is shown in \cite{EGL} that
\begin{equation}
[L_{1/m}(m \lambda+1)] = \sum_{\nu \vdash mr} c_{\lambda; m}^\nu [F M_{1/m}(\nu)],
\end{equation}
where the functor $F: H_{1/m}(S_{mr})-mod \to H_{1/m}(S_{mr+1})-mod$ acts on the standard modules as follows.
Let $\nu$ be a partition of $m r$. Then in the Grothendieck groups
$$
F: [M_{1/m}(\nu)] \to \bigoplus_{\widehat \nu \in B_\nu} [M_{1/m}(\widehat \nu)],
$$
where each diagram in the set $B_\nu$ is obtained from the diagram $\nu$ by adding a box with the content congruent to 0 modulo $m$.

This allows to restate Theorem \ref{thhilbser} in the following form.

\begin{corollary}
The Hilbert series of the algebra $\Lambda_r(m)$ has the form
\begin{equation}\label{quasi-form-2}
P_{r;m}(t)= \sum_{\lambda \vdash r} \dim \lambda \sum_{\nu \vdash m r} \sum_{\widehat \nu \in B_\nu} c^{\nu}_{\lambda; m} t^{\frac{r n}{2} - \frac{\kappa(\widehat\nu)}{m} } \chi_{\widehat\nu}(t).
\end{equation}
\end{corollary}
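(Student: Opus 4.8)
The plan is to obtain \eqref{quasi-form-2} as a direct reformulation of Theorem~\ref{thhilbser} in the case $s=1$, trading the coefficients $b^\nu_{\lambda,1;m}$ for the coefficients $c^\nu_{\lambda;m}$ by means of the branching functor $F$ recalled above.

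First I would specialize Theorem~\ref{thhilbser} to $s=1$, so that $n=mr+1$ and
$$
P_{r;m}(t)=\sum_{\lambda\vdash r}\dim\lambda\sum_{\mu\vdash n}b^\mu_{\lambda,1;m}\,t^{\frac{n(n-1)-2\kappa(\mu)}{2m}}\chi_\mu(t).
$$
Next I would combine the two expressions for $[L_{1/m}(m\lambda+1)]$ in the Grothendieck group: on one hand $[L_{1/m}(m\lambda+1)]=\sum_{\mu\vdash n}b^\mu_{\lambda,1;m}[M_{1/m}(\mu)]$, as used in the proof of Theorem~\ref{thhilbser}; on the other hand $[L_{1/m}(m\lambda+1)]=\sum_{\nu\vdash mr}c^\nu_{\lambda;m}[FM_{1/m}(\nu)]=\sum_{\nu\vdash mr}c^\nu_{\lambda;m}\sum_{\widehat\nu\in B_\nu}[M_{1/m}(\widehat\nu)]$. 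Since the standard-module classes $[M_{1/m}(\mu)]$, $\mu\vdash n$, are linearly independent, comparing coefficients gives $b^\mu_{\lambda,1;m}=\sum_{\nu\vdash mr:\ \mu\in B_\nu}c^\nu_{\lambda;m}$.

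Substituting this identity into the displayed Hilbert series and interchanging the order of summation — replacing $\sum_{\mu\vdash n}\sum_{\nu:\,\mu\in B_\nu}$ by $\sum_{\nu\vdash mr}\sum_{\widehat\nu\in B_\nu}$ with $\mu=\widehat\nu$ — yields
$$
P_{r;m}(t)=\sum_{\lambda\vdash r}\dim\lambda\sum_{\nu\vdash mr}\sum_{\widehat\nu\in B_\nu}c^\nu_{\lambda;m}\,t^{\frac{n(n-1)-2\kappa(\widehat\nu)}{2m}}\chi_{\widehat\nu}(t).
$$
Finally I would simplify the exponent: since $s=1$ we have $n-1=mr$, hence $\frac{n(n-1)}{2m}=\frac{n\cdot mr}{2m}=\frac{rn}{2}$, so that $\frac{n(n-1)-2\kappa(\widehat\nu)}{2m}=\frac{rn}{2}-\frac{\kappa(\widehat\nu)}{m}$, which is precisely the claimed formula \eqref{quasi-form-2}. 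I do not anticipate any real obstacle here beyond bookkeeping: the only points requiring care are the linear independence of the classes $[M_{1/m}(\mu)]$ (which legitimizes the coefficient comparison) and the correct reindexing of the double sum over the sets $B_\nu$.
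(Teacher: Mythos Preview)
Your proposal is correct and follows precisely the approach the paper intends: the Corollary is stated immediately after the description of the functor $F$ with the words ``This allows to restate Theorem~\ref{thhilbser} in the following form,'' and your argument simply makes explicit the intended substitution of $[L_{1/m}(m\lambda+1)]=\sum_{\nu\vdash mr}c^\nu_{\lambda;m}\sum_{\widehat\nu\in B_\nu}[M_{1/m}(\widehat\nu)]$ into the trace computation, together with the simplification $\frac{n(n-1)}{2m}=\frac{rn}{2}$ coming from $n-1=mr$.
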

Note that the right-hand side of the series \eqref{quasi-form-2} may contain  fractional powers of $t$ which would have to cancel.

It is established in \cite{FV2} that the graded algebra $\Lambda_{r}(m)$ is Cohen-Macaulay. It is convenient to use the form \eqref{quasi-form-2} to show that the algebra $\Lambda_{r}(m)$  is Gorenstein. 

\begin{theorem}
The Hilbert series of the algebra of quasi-invariants $\Lambda_{r}(m)$  satisfies the symmetry property
$$
P_{r; m}(t^{-1})=(-1)^{r+1} t^{n(1-r)} P_{r; m}(t).
$$
\end{theorem}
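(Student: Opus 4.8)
The plan is to combine the explicit formula of Theorem~\ref{thhilbser}, specialized to $s=1$ (so $n=mr+1$),
$$
P_{r;m}(t)=\sum_{\lambda\vdash r}\dim\lambda\sum_{\nu\vdash n}b^{\nu}_{\lambda,1;m}\,t^{\frac{n(n-1)-2\kappa(\nu)}{2m}}\,\chi_\nu(t),
$$
with two classical facts: a hook--content symmetry of the factors $\chi_\nu(t)$, and the behaviour of irreducible $S_n$-characters under conjugation of the partition.

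First I would record the symmetry of $\chi_\nu$. Writing $h(\square)=a(\square)+l(\square)+1$ and using the standard quantity $n(\nu)=\sum_i(i-1)\nu_i$ together with $\sum_{\square\in\nu}l(\square)=n(\nu)$, $\sum_{\square\in\nu}a(\square)=n(\nu')$, formula~\eqref{chi} gives at once
$$
\chi_\nu(t^{-1})=(-1)^{|\nu|}\,t^{|\nu|}\,\chi_{\nu'}(t),\qquad \kappa(\nu')=-\kappa(\nu),
$$
where $\kappa(\nu)=n(\nu')-n(\nu)$ is the sum of the contents of the boxes of $\nu$. Substituting $t\mapsto t^{-1}$ in the formula for $P_{r;m}$ and relabelling the inner summation index $\nu$ by the conjugate $\mu=\nu'$ turns $P_{r;m}(t^{-1})$ into
$$
\sum_{\lambda\vdash r}\dim\lambda\sum_{\mu\vdash n}b^{\mu'}_{\lambda,1;m}\,(-1)^{n}\,t^{\,n-\frac{n(n-1)+2\kappa(\mu)}{2m}}\,\chi_\mu(t).
$$
For a fixed $\mu$, the exponent of $t$ multiplying $\chi_\mu(t)$ here and in the $\mu$-term of $(-1)^{r+1}t^{n(1-r)}P_{r;m}(t)$ differ by $\tfrac{n}{m}\bigl(rm-(n-1)\bigr)$, which vanishes since $n=mr+1$. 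So the powers of $t$ already agree term by term.

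It then remains to verify, for each $\mu\vdash n$, the scalar identity
$$
(-1)^{n}\sum_{\lambda\vdash r}\dim\lambda\,b^{\mu'}_{\lambda,1;m}=(-1)^{r+1}\sum_{\lambda\vdash r}\dim\lambda\,b^{\mu}_{\lambda,1;m}.
$$
For this I would use $\sum_{\lambda\vdash r}\dim\lambda\,s_\lambda=p_1^{r}$ (the Schur expansion of $s_1^{r}$, $\dim\lambda$ being the number of standard tableaux of shape $\lambda$). Applying the plethysm $(-)[p_m]$, which is a ring endomorphism with $p_1[p_m]=p_m$, yields $\sum_{\lambda}\dim\lambda\,s_\lambda(x_1^m,x_2^m,\dots)=p_m^{r}$; multiplying by $s_1=p_1$ and pairing with $s_\nu$ gives
$$
\sum_{\lambda\vdash r}\dim\lambda\,b^{\nu}_{\lambda,1;m}=\langle p_{(m^{r},1)},\,s_\nu\rangle=\chi^{\nu}_{(m^{r},1)},
$$
the value of the irreducible $S_n$-character $\chi^{\nu}$ at a permutation of cycle type $(m^{r},1)$. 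Since $\chi^{\mu'}_{\rho}=\mathrm{sgn}(\rho)\,\chi^{\mu}_{\rho}$ and a product of $r$ disjoint $m$-cycles has sign $(-1)^{r(m-1)}$, the identity above reduces to $(-1)^{n+r(m-1)}=(-1)^{r+1}$, which holds because $n+r(m-1)-(r+1)=2rm$. Hence $P_{r;m}(t^{-1})$ and $(-1)^{r+1}t^{n(1-r)}P_{r;m}(t)$ coincide term by term, which is the asserted functional equation; note that no linear independence of the $\chi_\mu(t)$ is needed.

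I expect the only real difficulty to be the bookkeeping: getting the conjugation sign in $\chi_\nu(t^{-1})$ exactly right, and above all recognizing $\sum_{\lambda}\dim\lambda\,b^{\nu}_{\lambda,1;m}$ as the symmetric-group character value $\chi^{\nu}_{(m^{r},1)}$ --- once that is in hand, the sign of $r$ disjoint $m$-cycles does the rest. The same argument can alternatively be run on the form~\eqref{quasi-form-2}, using the branching rule $S_n\downarrow S_{n-1}$ together with the Murnaghan--Nakayama rule to reduce to characters of $S_{mr}$, but the route via Theorem~\ref{thhilbser} is the most direct.
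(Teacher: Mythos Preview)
Your proof is correct, and it takes a genuinely different route from the paper's.

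The paper does not work with the formula of Theorem~\ref{thhilbser} directly. Instead it first rewrites $P_{r;m}(t)$ in the form~\eqref{quasi-form-2} (via the branching functor $F$ and the coefficients $c^{\nu}_{\lambda;m}$), and then pairs each summand indexed by $(\lambda,\nu,\widehat\nu)$ with the one indexed by the conjugate triple $(\lambda',\nu',{\widehat\nu}')$. The key input is the sign rule $c^{\nu}_{\lambda;m}=(-1)^{(m-1)r}c^{\nu'}_{\lambda';m}$ (quoted from \cite{EGL}) together with $\dim\lambda=\dim\lambda'$; the hook--content symmetry $\chi_{\widehat\nu}(t)=(-1)^n t^{-n}\chi_{{\widehat\nu}'}(t^{-1})$ and $\kappa({\widehat\nu}')=-\kappa(\widehat\nu)$ are the same ones you use.

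Your approach stays with Theorem~\ref{thhilbser} and, instead of pairing $\lambda$ with $\lambda'$, \emph{sums} over $\lambda$ with weight $\dim\lambda$. The point that $\sum_{\lambda\vdash r}\dim\lambda\,s_\lambda=p_1^{\,r}$, hence $\sum_{\lambda}\dim\lambda\,b^{\nu}_{\lambda,1;m}=\langle p_{(m^r,1)},s_\nu\rangle=\chi^{\nu}_{(m^r,1)}$, is a nice observation: it replaces the cited plethysm sign rule for $c^{\nu}_{\lambda;m}$ by the elementary identity $\chi^{\mu'}_{\rho}=\mathrm{sgn}(\rho)\,\chi^{\mu}_{\rho}$, and it bypasses the reformulation~\eqref{quasi-form-2} entirely. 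In effect, what you prove is equivalent to the paper's termwise pairing after summing over the orbit $\{\lambda,\lambda'\}$, but the packaging via character values is more self-contained.

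One harmless slip: in your last parity check you write $n+r(m-1)-(r+1)=2rm$, whereas with $n=mr+1$ this quantity is $2r(m-1)$. Either way it is even, so the conclusion $(-1)^{n+r(m-1)}=(-1)^{r+1}$ stands.
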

\begin{proof}
Let us choose a term in the sum \eqref{quasi-form-2} corresponding to the diagrams $\lambda, \nu, \widehat \nu$. Notice that for the conjugate diagrams $\lambda', \nu'$ one can choose $\widehat {\nu'} = {\widehat \nu}'$. Indeed, if $\widehat\nu$ is obtained from $\nu$ by adding a box with the content $k$ then the transposed partition ${\widehat\nu}'$ is obtained from  ${\nu}'$ by adding a box with the content $-k$ so both contents are congruent to 0 modulo $m$ and ${\widehat \nu}' \in B_{\nu'}$. Thus the series \eqref{quasi-form-2} decomposes as a sum of terms of the form
$$
f(t)=(\dim \lambda) c_{\lambda; m}^\nu t^{\frac{r n}{2} - \frac{\kappa(\widehat \nu)}{m}} \chi_{\widehat \nu}(t)+
(\dim \lambda') c_{\lambda'; m}^{\nu'} t^{\frac{r n}{2} - \frac{\kappa({\widehat \nu}')}{m}} \chi_{{\widehat \nu}'}(t).
$$
Recall that $\dim \lambda = \dim \lambda'$ and $c_{\lambda; m}^\nu= (-1)^{(m-1)r} c_{\lambda'; m}^{\nu'}$ (see \cite{EGL}).
It is also easy to see from \eqref{chi} that
$$
\chi_{\widehat \nu}(t)=(-1)^{n} t^{-n}\chi_{\widehat \nu'}(t^{-1}).
$$
Therefore
$$
f(t)=(\dim \lambda) c_{\lambda; m}^{\nu} t^{\frac{r n}{2}}\left(t^{-\frac{\kappa(\widehat \nu)}{m}} \chi_{\widehat \nu}(t) + (-1)^{(m-1)r}t^{\frac{\kappa(\widehat \nu)}{m}} \chi_{{\widehat \nu}'}(t)\right),
$$
and
\begin{multline*}
f(t^{-1})= (\dim \lambda) c_{\lambda; m}^{\nu} t^{-\frac{r n}{2}}\left(t^{\frac{\kappa(\widehat \nu)}{m}} (-1)^{n} t^{n} \chi_{{\widehat \nu}'}(t) + (-1)^{(m-1)r}t^{-\frac{\kappa(\widehat \nu)}{m}} (-1)^{n} t^{n}\chi_{{\widehat \nu}}(t)\right)  \\
= (-1)^{r+1} t^{n(1-r)} f(t),
\end{multline*}
so the statement follows.

\end{proof}

By Stanley criterion \cite{S} we have the following

\begin{corollary}
The algebra $\Lambda_{r}(m)$ is Gorenstein. 
\end{corollary}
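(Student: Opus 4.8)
The plan is to obtain the Gorenstein property as a formal consequence of the functional equation for $P_{r;m}(t)$ established in the theorem just above, by appealing to Stanley's criterion; in effect the substantive work has already been done, and what remains is to verify the hypotheses and match up the shape of the functional equation.

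First I would record the structural facts about $\Lambda_r(m)=\Lambda_{r,1}(m)$ needed to apply the criterion. It is a graded subalgebra of the polynomial ring $\mathbb{C}[y_1,\dots,y_r,z_1]$, hence a Noetherian graded domain, finitely generated over $\mathbb{C}$, with degree-zero part $\mathbb{C}$. It is a finite module over $\mathbb{C}[Q_1,\dots,Q_{r+1}]$, and by Theorem~\ref{mquasi-inv1} (equivalently \cite{FV2} in the case $s=1$) it is Cohen--Macaulay, hence in fact free over $\mathbb{C}[Q_1,\dots,Q_{r+1}]$, so its Krull dimension is $d=r+1$. Then I would invoke Stanley's theorem \cite{S}: a Cohen--Macaulay graded domain $A$, finitely generated over a field $A_0$, of Krull dimension $d$, is Gorenstein if and only if its Hilbert series satisfies $H_A(t^{-1})=(-1)^d t^{-a}H_A(t)$ for some $a\in\mathbb{Z}$. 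Taking $A=\Lambda_r(m)$ and $d=r+1$, this says precisely that $\Lambda_r(m)$ is Gorenstein exactly when $P_{r;m}(t^{-1})=(-1)^{r+1}t^{-a}P_{r;m}(t)$ for some integer $a$.

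Finally I would observe that the preceding theorem delivers exactly this, with $a=n(r-1)$, and with the sign agreeing with the required $(-1)^{d}=(-1)^{r+1}$; this yields the corollary. I do not expect any genuine obstacle at this point: the nontrivial input is the functional equation already proved (via the conjugation symmetry $\widehat{\nu'}=\widehat{\nu}'$ together with the relations $\dim\lambda=\dim\lambda'$ and $c_{\lambda;m}^\nu=(-1)^{(m-1)r}c_{\lambda';m}^{\nu'}$), and the only points that deserve a moment's care are that $\Lambda_r(m)$ is honestly a domain and honestly Cohen--Macaulay — both immediate from the above, but both essential since Stanley's criterion fails without them — and that the parity appearing in the functional equation is indeed governed by the Krull dimension $r+1$ rather than by, say, the number of ambient variables.
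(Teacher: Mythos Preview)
Your proposal is correct and is essentially identical to the paper's approach: the paper simply writes ``By Stanley criterion \cite{S} we have the following'' after proving the functional equation, and your proposal just makes explicit the verification of the hypotheses (Cohen--Macaulay, domain, Krull dimension $r+1$) and the matching of the sign $(-1)^{r+1}$ and exponent $a=n(r-1)$.
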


We are going to obtain yet another form of the Hilbert series \eqref{quasi-form-2}.                                  
                Note that the coefficients $c^\nu_{\lambda; m}$ can be expressed in terms of characters of the symmetric group. Let $\mu$ be a partition of $r$ and denote by $C_{\mu}$ the corresponding conjugacy class in $S_r$. Then
$$
c^\nu_{\lambda; m} = \sum_{\mu \vdash r} \frac{|C_\mu|}{r!} \chi^\lambda(C_\mu) \chi^\nu(C_{m \mu}),
$$
where $\chi^\lambda$, $\chi^\nu$ are characters of representations of $S_r, S_{m r}$ corresponding to the partitions $\lambda, \nu$ (see e.g. \cite{LZ}).

Let $\widehat \chi^\lambda$ be the character of the module $U_\lambda$ which is induced from the trivial one for the parabolic subgroup corresponding to partition $\lambda$. Recall the Kostka matrix      $K_{\mu \lambda}$  given by the relations  $\widehat \chi^\lambda = \sum_\mu K_{\mu \lambda} \chi^\mu$.       We will also need the  inverse Kostka matrix $K^{-1}$  satisfying $\chi^\lambda = \sum_\mu K_{\mu \lambda}^{-1} \widehat \chi^\mu$.                                   
 Then we have  
 \begin{equation}
 \label{stst}
 \sum_{\lambda \vdash r} \dim \lambda \, c_{\lambda; m}^\nu = \sum_{\nad{\lambda, \mu \vdash r}{\tilde \nu \vdash m r}} \dim \lambda \frac{|C_\mu|}{r!} \chi^\lambda (C_\mu) K^{-1}_{\tilde \nu \nu} \widehat\chi^{\tilde \nu}(C_{m \mu}).
 \end{equation}
 Note that $\widehat\chi^{\tilde \nu}(C_{m \mu})$ is non-zero only if partition $\tilde \nu$ has the form $\tilde \nu = m \alpha$ for some $\alpha\vdash r$ in which case 
 $\widehat\chi^{\tilde \nu}(C_{m \mu}) = \widehat \chi^\alpha(C_\mu)$. Taking into account  orthogonality of characters we continue \eqref{stst} as
 $$
 \sum_{\lambda,\mu,\alpha,\beta\vdash r} \dim \lambda \frac{|C_\mu|}{r!}  \chi^\lambda (C_\mu) K^{-1}_{m\alpha, \nu} 
 K_{\beta \alpha} 
\chi^{\beta}(C_{\mu})  =
\sum_{\alpha, \lambda \vdash r} \dim \lambda \, K^{-1}_{m\alpha, \nu} K_{\lambda \alpha}
 $$
$$
= \sum_{\alpha \vdash r} \dim U_\alpha \, K^{-1}_{m\alpha, \nu} = \sum_{\alpha \vdash r} \frac{r!}{\alpha!} K^{-1}_{m\alpha, \nu},
$$                                          
where $\alpha!= \a_1! \a_2!\ldots$.                              
Thus we get the following expression for the Hilbert series \eqref{quasi-form-2}:
\begin{equation}\label{quasi-form-3}
P_{r;m}(t)= \sum_{\nad{\alpha \vdash r, \nu \vdash mr}{\widehat \nu \in B_\nu}} \frac{r!}{\alpha!} K^{-1}_{m\alpha, \nu} t^{\frac{r n}{2} - \frac{\kappa(\widehat\nu)}{m} } \chi_{\widehat\nu}(t).
\end{equation}
It would be interesting to see if there is a simpler form of the Hilbert series $P_{r;m}(t)$.

Finally we note that the algebra $\Lambda_{r,s}(m)$ is not expected to be Gorenstein for $s>1$ as the case $r=1$ shows. Indeed, it is shown in \cite{J} that for any non-zero $m$ the Hilbert series $P_{1,s; m}$ is the same which is known from \cite{SV1} to be equal to $h=\frac{1-t+t^{s+1}}{(1-t)^2(1-t^2)\ldots(1-t)^s}$ so the algebra is not Gorenstein.

{\bf Acknowledgements.}      I am very grateful to P. Etingof for a number of useful and helpful discussions and comments, to I. Losev for explanations about \cite{EGL}, to C. Korff,  J. Nimmo, and A.P. Veselov   for useful discussions. I would also like to thank V. Lunts for the  hospitality at his summer seminar 2013 where a  part of this work was done.

\end{document}